\definecolor{trp}{rgb}{1,1,1}
\definecolor{red}{rgb}{1,0,.2}
\newtheorem{theorem}{Theorem}[section]
\theoremstyle{plain}
\newtheorem{claim}{Claim}
\newtheorem*{claim*}{Claim}
\newtheorem*{example*}{Example}
\newtheorem{lemma}[theorem]{Lemma}
\newtheorem{prop}[theorem]{Proposition}
\newtheorem*{remark*}{Remark}
\numberwithin{equation}{section}
\newtheorem*{theorem*}{Theorem}
\newtheorem{theorem MTP}{Mass Transference Principle}
\newtheorem*{theorem MTP*}{Mass Transference Principle}
\newtheorem{theorem K}{Khintchine's Theorem}
\newtheorem*{theorem K*}{Khintchine's Theorem}
\newtheorem{theorem J}{Jarn\'{\i}k's Theorem}
\newtheorem*{theorem J*}{Jarn\'{\i}k's Theorem}
\newcommand{\R}{\mathbb{R}}
\newcommand{\N}{\mathbb{N}}
\newcommand{\ov}{\boldsymbol\omega}
\newcommand{\tv}{\boldsymbol\tau}
\newcommand{\ii}{\mathbf{i}}
\newcommand{\jj}{\mathbf{j}}
\newcommand{\x}{\mathbf{x}}
\newcommand{\A}{{\underline{A}}}
\newcommand{\PP}{\mathbb{P}}
\newcommand{\Sm}{\mathbb{S}}
\newcommand{\y}{\underline{\mathbf{y}}}
\newcommand{\diam}{\mathrm{diam}}
\renewcommand{\Bbb}[1]{\mathbb{#1}}
\newcommand{\Q}{{\Bbb Q}}         % rational numbers
\newcommand{\Z}{{\Bbb Z}}         % integer numbers
\newcommand{\cA}{{\mathcal A}}
\newcommand{\cH}{{\mathcal H}}
\newcommand{\bad}{\mathbf{Bad}}
\newcommand{\QI}{\mathbf{QI}}
\DeclareMathOperator{\dimh}{dim_H}
\begin{document}
\title[On the Hausdorff measure of shrinking target sets on self-conformal sets]{On the Hausdorff measure of shrinking target sets on self-conformal sets}

\author{Demi Allen}
\address[Demi Allen]{School of Mathematics, University of Bristol, Fry Building, Woodland Road, Bristol, BS8 1UG, UK, and the Heilbronn Institute for Mathematical Research, Bristol, UK}
\email{demi.allen@bristol.ac.uk}

\author{Bal\'azs B\'ar\'any}
\address[Bal\'azs B\'ar\'any]{Budapest University of Technology and Economics, Department of Stochastics, MTA-BME Stochastics Research Group, P.O.Box 91, 1521 Budapest, Hungary}
\email{balubsheep@gmail.com}

\subjclass[2010]{Primary: 28A80, 37C45 Secondary: 11J83}
\keywords{Self-conformal sets, shrinking targets, Hausdorff measure, Diophantine approximation.}
\thanks{The first author was supported in part by EPSRC Doctoral Prize Fellowship grant EP/N509565/1 and the Heilbronn Institute for Mathematical Research. The second author acknowledges support from the grants OTKA K123782, NKFI PD123970, and the J\'anos Bolyai Research Scholarship of the Hungarian Academy of Sciences.}

\begin{abstract}
In this article, we study the Hausdorff measure of shrinking target sets on self-conformal sets. The Hausdorff dimension of the sets we are interested in here was established by Hill and Velani in 1995. However, until recently, little more was known about the Hausdorff measure of these particular sets. In this paper we provide a complete characterisation of the Hausdorff measure of these sets, obtaining a dichotomy for the Hausdorff measure which is determined by the convergence or divergence of a sum depending on the radii of our ``shrinking targets''. Our main result complements earlier work of Levesley, Salp, and Velani~(2007), and recent work of Baker (2019). 
\end{abstract}
\date{\today}

\maketitle

\thispagestyle{empty}

\section{Introduction}

The central theme of Diophantine approximation is understanding how well real numbers can be approximated by rationals and various generalisations thereof. Given an approximating function $\psi: \N \to \R^+:=[0,\infty)$, the classical set of \emph{$\psi$-well-approximable points} is defined as the points $x\in[0,1]$ such that $\left|x-p/q\right|<\psi(q)$ for infinitely many $(p,q) \in \Z \times \N.$ In \emph{metric} Diophantine approximation, there is a particular emphasis on understanding the ``size'' of such sets, in particular, the Lebesgue and Hausdorff measures. The most classical results in this direction can be traced back to Khintchine \cite{kh,Khintchine ref} and Jarn\'{i}k \cite{Jarnik31} from the 1920s and 1930s who proved beautiful zero-full dichotomies for, respectively, the Lebesgue and Hausdorff measures of the $\psi$-well-approximable points in $\R^t$. Throughout we will denote the Hausdorff $s$-measure of $F$ by $\cH^s(F)$, and we will denote the Hausdorff dimension of $F$ by $\dimh{F}$. For the definitions and properties of Hausdorff measure and dimension we refer the reader to~\cite{falconerfractalgeom}.

In 1995, Hill and Velani \cite{HillVelanifirst} introduced a natural analogue of the classical $\psi$-well-approximable points from Diophantine approximation to the theory of dynamical systems, these are the so-called \emph{shrinking target sets}. The basic idea is that we have a point, say $y \in X$ where $X$ is a metric space, and a decreasing sequence $(r_n)_{n \in \N}$ of positive real numbers. The balls $ B(y,r_n)$ are our \emph{targets}. Given a map $T: X \to X$, we are interested in the set of points $x \in X$ for which $T^nx$ hits the target $B(y,r_n)$ infinitely often. Since their initial introduction, many more authors have contributed to the study of \emph{shrinking targets}, \emph{moving targets}, and \emph{recurrence sets}. To name but a few, see \cite{baker2019quantitative, HillVelanifirst, HillVelanitori, Jarvenpaa_et_al_2017, Jarvenpaa2017, PerssonRams} and references within.

In this paper, we are particularly interested in the shrinking target problem on self-conformal sets. Fix $t \in \N$ and let $D\subset\R^t$ be a simply connected compact set such that $D=\overline{D^o}$. Here we denote the interior of a set $F \subset \R^t$ by $F^o$ and we denote its closure by~$\overline{F}$. We say that a map $f\colon D\mapsto D$ is contracting if there exists a constant $0<c<1$ such that
\[\|f(x)-f(y)\|\leq c\|x-y\|\]
for every $x,y\in D$. Throughout, $\|\cdot\|:\R^t \to \R^+$ will be any fixed norm on $\R^t$, and $B(x,r)$ denotes an open ball in $\R^t$, with respect to the fixed norm $\|\cdot\|$, centered at $x$ with radius~$r$.

Let $\Lambda$ be a finite set of symbols and let $\Phi=\{f_i\colon D\mapsto D\}_{i\in\Lambda}$ be a finite set of contracting mappings. We call $\Phi$ an \emph{iterated function system} (IFS).  Hutchinson \cite{Hutchinson} showed that there exists a unique non-empty compact set $X$ such that
$$
X=\bigcup_{i\in\Lambda}f_i(X).
$$
We call $X$ the \emph{attractor} of $\Phi$. Throughout the paper we will assume that the maps $\{f_i\}_{i \in \Lambda}$ are $C^{1+\varepsilon}$-conformal regular strict contractions, i.e. the maps are $C^{1+\varepsilon}$-conformal and there exist constants $0<a_{\min}\leq a_{\max}<1$ such that
\begin{align}\label{f' bounds}
a_{\min}\leq\min_{x\in D}\|f_i'(x)\|\leq\max_{x\in D}\|f_i'(x)\|\leq a_{\max}
\end{align}
for every $i\in\Lambda$, where $\|f_i'(x)\|$ denotes the operator norm of the linear map $y \to f_i'(x)y$. We say that a mapping $f\colon D\mapsto D$ is $C^{1+\varepsilon}$-conformal if for every $x\in D$ and $y\in\R^t$,
\[\|f'(x)y\|=\|f'(x)\|\|y\|\text{ and }\|f'(x)-f'(y)\| \leq C\|x-y\|^{\varepsilon}\text{ for all $x,y \in D$}.\]
Note that the second condition above just says that the mapping $x \to f'(x)$ is H\"{o}lder continuous with exponent $\varepsilon>0$. If the iterated function system $\Phi$ consists of $C^{1+\varepsilon}$-conformal mappings, we refer to $\Phi$ as a \emph{conformal iterated function system} and we call the attractor of the conformal IFS $\Phi$ a \emph{self-conformal set}. 

In the special case when the IFS consists of similarities, i.e. there exist constants $0<a_i<1$ for each $i \in \Lambda$ such that for all $x,y \in D$ we have
\[\|f_i(x)-f_i(y)\|=a_i\|x-y\|,\]
we say that $\Phi$ is a \emph{self-similar iterated function system} and that the attractor is a \emph{self-similar set}. Clearly, all self-similar sets are self-conformal.

Let us denote by $\Lambda^*$ the set of all finite sequences formed of the symbols in $\Lambda$, i.e. \mbox{$\Lambda^*=\bigcup_{n=0}^{\infty}\Lambda^n.$} We will denote the length of $\ii\in\Lambda^*$ by $|\ii|$. We will also use the following shorthand notation:
\[f_\ii=f_{i_1}\circ f_{i_2}\circ\cdots\circ f_{i_{|\ii|}}, \quad \text{and} \quad X_{\ii}=f_{\ii}(X).\]
We will adopt the convention that $f_\emptyset(x)=x$. We observe that by combining \eqref{f' bounds} with the chain rule, we also have for any $\ii \in \Lambda^*$ that
\begin{align}\label{f' composition bounds}
a_{\min}^{|\ii|}\leq\min_{x\in D}\|f_{\ii}'(x)\|\leq\max_{x\in D}\|f_{\ii}'(x)\|\leq a_{\max}^{|\ii|}.
\end{align}

The dimension theory of shrinking target sets is well understood in the case of conformal iterated function systems which satisfy the open set condition. We say that $\Phi$ satisfies the \emph{open set condition} (OSC) if there exists an open set $U\subset D$ such that
\begin{equation}\label{eq:OSC}
f_i(U)\subset U\text{ for every } i \in \Lambda,\text{ and }f_i(U) \cap f_j(U)=\emptyset \text{ for all }i\neq j \in \Lambda.
\end{equation}

Given an approximating function $\Psi\colon\Lambda^*\mapsto\R^+$, for each $x\in X$ we will be interested in the set
\begin{equation}\label{eq:shrinking1}
W(x,\Psi):=\{y\in X:\|y-f_\ii(x)\|<\Psi(\ii)\text{ for infinitely many }\ii\in\Lambda^*\}.
\end{equation}
We note that although the set $W(x, \Psi)$ is defined using strict inequalities, the geometric measure properties (in particular, Hausdorff measure and dimension) of the set would be unchanged if we instead defined the set using non-strict inequalities.

The Hausdorff dimension of $W(x,\Psi)$ was established by Hill and Velani in~\cite{HillVelanifirst} for $\Psi: \Lambda^* \to \R^+$ of the form
\begin{align} \label{Psi special form}
\Psi(\ii)=\diam(X_{\ii})\psi(|\ii|)
\end{align}
where $\psi: \N \to \R^+$ is any monotonic decreasing approximating function (see Theorem \ref{thm:hillvelani} for a precise statement). Note that under the OSC, the maps $f_i$ can be interpreted as the local inverses of some expanding map $T$ on $D$, and so $W(x,\Psi)$ with $\Psi$ of the form \eqref{Psi special form} is a proper generalisation of the shrinking target sets of expanding dynamics to iterated function systems.

In recent years, several other authors have also studied the problem of shrinking targets on fractals. For example, Chernov and Kleinbock \cite{CherKlein} studied the measure of shrinking target sets with respect to ergodic measures, Chang, Wu and Wu \cite{ChangWuWu2019} very recently studied the problem of recurrence sets on linear iterated iterated function systems consisting of maps with equal contraction ratios, Koivusalo and Ram\'{\i}rez \cite{KoivusaloRamirez} considered shrinking targets on self-affine sets, the second author and Rams computed the Hausdorff dimension for certain shrinking targets on Bedford-McMullen carpets \cite{BaranyRams2018}, and Seuret and Wang considered some related problems in the setting of conformal iterated function systems~\cite{SeuretWang}. However, the value of the $\dimh(W(x,\Psi))$-dimensional Hausdorff measure remained unknown except in some very special cases. The main result of the present paper is the following theorem which provides a complete characterisation of the Hausdorff $s$-measure of the set $W(x,\Psi)$ when $\Psi$ takes the form given in \eqref{Psi special form}.

\begin{theorem}[Main result]\label{thm:main} Let $\Phi$ be a conformal iterated function system which satisfies the open set condition and has attractor $X$. Let $x\in X$ and let $\psi\colon\N\mapsto\R^+$ be a monotonic decreasing function. Let $\Psi(\ii)=\mathrm{diam}(X_\ii)\psi(|\ii|)$ and let $W(x,\Psi)$ be as in \eqref{eq:shrinking1}. Then, for any open ball $B\subset\R^t$,
	$$\mathcal{H}^s(W(x,\Psi)\cap B)=\begin{cases}
	0 & \text{if }\sum_{\ii\in\Lambda^*}\Psi(\ii)^s<\infty,\\&\\
	\mathcal{H}^s(X\cap B) & \text{if }\sum_{\ii\in\Lambda^*}\Psi(\ii)^s=\infty.
	\end{cases}
	$$
\end{theorem}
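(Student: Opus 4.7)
The convergence half is a routine Hausdorff-Cantelli argument: for every $N \in \N$, $W(x,\Psi) \subseteq \bigcup_{|\ii|\ge N} B(f_\ii(x), \Psi(\ii))$, so $W(x,\Psi)\cap B$ is covered by balls of diameters at most $\delta_N := \sup_{|\ii|\ge N} 2\Psi(\ii)$; hence $\cH^s_{\delta_N}(W(x,\Psi)\cap B) \le \sum_{|\ii|\ge N}(2\Psi(\ii))^s$, which tends to zero as $N \to \infty$ under $\sum_\ii \Psi(\ii)^s < \infty$.

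The divergence half requires more. First, the upper bound $\cH^s(W(x,\Psi)\cap B) \le \cH^s(X \cap B)$ is automatic: since $\Psi(\ii)\to 0$ and $X$ is closed, every $y\in W(x,\Psi)$ is a limit of points $f_\ii(x)\in X$, so $W(x,\Psi)\subseteq X$. Second, by \eqref{f' composition bounds} and the Bowen-formula characterisation of $s_0:=\dim_H X$ under OSC, $\sum_\ii \diam(X_\ii)^s$ converges geometrically for $s>s_0$; since $\psi(n)\le \psi(1)<\infty$, the divergence hypothesis $\sum_\ii \Psi(\ii)^s=\infty$ forces $s\le s_0$. So the task in the divergence half is, for $s\le s_0$, to prove the matching lower bound $\cH^s(W(x,\Psi)\cap B) \ge \cH^s(X\cap B)$.

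The strategy combines two ingredients. The first is a Khintchine-type full-measure statement at the critical dimension $s_0$: under OSC the attractor $X$ carries an Ahlfors $s_0$-regular self-conformal measure $\mu$, equivalent to $\cH^{s_0}|_X$, and bounded-distortion estimates yield $\mu(B(f_\ii(x),r_\ii)\cap X) \asymp \min(r_\ii,\diam(X_\ii))^{s_0}$; organising $\{\ii\}$ into antichains of cylinders of comparable diameter and applying the divergence Borel-Cantelli lemma to the resulting quasi-independent events produces $\mu(\limsup_\ii B(f_\ii(x),r_\ii)\cap B) = \mu(X\cap B)$ whenever $\sum_\ii r_\ii^{s_0}=\infty$. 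The second ingredient is the Beresnevich-Velani Mass Transference Principle in its formulation for Ahlfors $s_0$-regular sets: setting $r_\ii := \Psi(\ii)^{s/s_0}$, the Khintchine statement above gives full $\cH^{s_0}$-measure of $\limsup_\ii B(f_\ii(x),\Psi(\ii)^{s/s_0})\cap X \cap B$, and the MTP transfers this to $\cH^s(\limsup_\ii B(f_\ii(x),\Psi(\ii))\cap B)=\cH^s(X\cap B)$. For $s=s_0$ the MTP step is trivial and the Khintchine result is applied directly with $r_\ii=\Psi(\ii)$.

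The main obstacle lies in the quasi-independence step of the Khintchine-type result. Within a single generation $\{|\ii|=n\}$ OSC immediately gives disjoint cylinders and approximately independent events, but $\Lambda^*$ spans all generations and contains comparable multi-indices whose associated events are strongly dependent; a stopping-time reorganisation into antichains of cylinders of comparable diameter is required, together with a verification that divergence of the $s_0$-sum passes to some such antichain. The second-moment estimates needed to apply the divergent Borel-Cantelli lemma for quasi-independent events rely on propagating the H\"older control $\|f_i'(x)-f_i'(y)\|\le C\|x-y\|^\varepsilon$ through arbitrary compositions $f_\ii$, and it is here that the $C^{1+\varepsilon}$-conformal hypothesis enters crucially, yielding uniform-in-$|\ii|$ bounded distortion constants.
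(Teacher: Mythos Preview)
Your convergence argument and the reduction $s\le s_0$ are fine. The divergence strategy, however, has a genuine gap at the Khintchine step, and the paper devotes an entire section (Section~\ref{sec:nomasstrans}) to proving that this gap cannot be filled.

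You assert that for arbitrary radii $r_\ii>0$ with $\sum_\ii r_\ii^{s_0}=\infty$ one has $\mu\bigl(\limsup_\ii B(f_\ii(x),r_\ii)\bigr)=1$, and you plan to apply this with $r_\ii=\Psi(\ii)^{s/s_0}$. This is exactly the hypothesis of the Mass Transference Principle, and the paper shows in Proposition~\ref{prop:nogood1} that it is \emph{false} in the inhomogeneous self-similar case: for a self-similar IFS with at least two distinct contraction ratios and any $\psi$ with shrinking rate $\alpha\in(0,\infty)$, one has $\cH^{s_0}\bigl(W(x,\Psi^{s/s_0})\bigr)=0$ even though $\sum_\ii\Psi(\ii)^s=\infty$. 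The reason your quasi-independence heuristic breaks down is that $r_\ii=(\diam(X_\ii)\psi(|\ii|))^{s/s_0}$ is \emph{not} of the form $\diam(X_\ii)\cdot(\text{function of }|\ii|)$; the exponent $s/s_0<1$ mixes the cylinder scale with the target scale, and when the $\diam(X_\ii)$ vary wildly across $|\ii|=n$ the reorganisation into antichains cannot salvage divergence together with quasi-independence. Proposition~\ref{prop:nogood2} further shows that no reparametrisation $\overline\psi$ can repair this so as to feed Baker's Theorem~\ref{thm:baker}(ii).

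What the paper does instead is bypass the MTP entirely. After reducing (via Theorems~\ref{thm:hillvelani} and~\ref{thm:baker}) to the critical case $P(s)=s\alpha$ with $0<\alpha<\infty$, it works symbolically: it encodes $W(x,\Psi)$ via a shrinking-target set $\widehat W(\x,\rho)$ in $\Sigma$, and constructs directly a probability measure $\eta$ supported on $\widehat W(\x,\rho)$ with $\eta([\ii])\ll\delta\cdot\diam(X_\ii)^s$ for every prescribed $\delta>0$ (Proposition~\ref{prop:massdist}). The measure $\eta$ is a mixture $\int\mu_{\A}\,d\nu(\A)$ over rapidly growing hitting-time sequences $\A$, with careful bookkeeping (Lemmas~\ref{lem:existseq}--\ref{lem:boundprodomega}) to control the combinatorics of overlapping target hits. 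The mass distribution principle then gives $\cH^s(\pi\widehat W(\x,\rho))=\infty$, and localisation to a ball $B$ is handled by pushing forward through a single map $f_\ii$ with $f_\ii(X)\subset B$.
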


One of the previous examples where such complete characterisation was known is the work of Levesley, Salp and Velani \cite{LevSalpVel}, which was the original motivation of our paper. In~\cite{LevSalpVel}, Levesley, Salp and Velani considered points in general \emph{missing digit sets} which are $\psi$-well-approximable by rationals with appropriate denominators. A classical example of a missing digit set is the middle-third Cantor set  and the work of Levesley, Salp, and Velani is just one of many works which aims to address the following question of Mahler posed in 1984 in his famous note ``Some Suggestions for Further Research'' \cite{Mahler84}:
\begin{quotation}
	\emph{How close can irrational elements of Cantor's set be approximated by rational numbers \\
		(i) in Cantor's set, and \\
		(ii) by rational numbers not in Cantor's set?} \\
\end{quotation}  Suppose $b \geq 3$ is an integer and $J(b)$ is any proper subset of the set $S(b):=\{0,1,\dots,b-1\}$ with $\#J(b) \geq 2$. Denote by $K_{J(b)}$ the missing digit set consisting of the numbers $x \in [0,1]$ for which there exists a base-$b$ expansion consisting only of digits from $J(b)$. For example, the middle-third Cantor set is an example of a missing digit set corresponding to taking $b=3$ and $J(b)=\{0,2\}$. It can easily be verified that the Hausdorff dimension of $K_{J(b)}$ is
$\gamma^* := \dimh{K_{J(b)}} = \frac{\log{\#J(b)}}{\log{b}}.$

Now, let $\cA(b) = \{b^n: n=0,1,2,\dots\}$ and let $\phi: \N \to \R^+$ be an approximating function. Then let
\[W_{\cA(b)}(\phi):=\left\{x \in [0,1]: \left|x-\frac{p}{q}\right|<\phi(q) \text{ for infinitely many } (p,q) \in \Z \times \cA(b) \right\}.\]
It is easy to see that $W_{\cA(b)}(\phi)\cap K_{J(b)}$ corresponds to the shrinking target set $W(0,\Psi) \cup W(1,\Psi)$ with $\psi(n)=b^n\phi(b^n)$. In \cite{LevSalpVel}, Levesley, Salp and Velani state the following result.\footnote{In \cite[Theorem 4]{LevSalpVel} the result is stated for general gauge functions $f$ whereas here for simplicity we opt to state the result only in terms of Hausdorff $s$-measure.} 

\begin{theorem} [Levesley -- Salp -- Velani, \cite{LevSalpVel}] \label{LSV theorem}
	Let $s \geq 0$. Then,
	\[
	\cH^{s}(W_{\cA(b)}(\phi) \cap K_{J(b)})=
	\begin{cases}
	0&\text{if}\quad\sum_{n=1}^{\infty}{\phi(b^n)^s \times (b^n)^{\gamma^*}}<\infty,\\
	&\\
	\cH^s (K_{J(b)})&\text{if}\quad\sum_{n=1}^{\infty}{\phi(b^n)^s \times (b^n)^{\gamma^*}}=\infty.
	\end{cases}
	\]
\end{theorem}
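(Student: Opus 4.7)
The proof is a routine consequence of our main result, Theorem~\ref{thm:main}, applied to the self-similar iterated function system that generates $K_{J(b)}$. The set $K_{J(b)}$ is the attractor of the conformal IFS $\Phi=\{f_j(x)=(x+j)/b\}_{j\in J(b)}$ on $D=[0,1]$, which trivially satisfies the open set condition with $U=(0,1)$. Each $f_j$ is a similarity with contraction ratio $1/b$, so for $\ii\in J(b)^n$ the cylinder $X_\ii$ sits inside $[m_\ii/b^n,(m_\ii+1)/b^n]$ with $m_\ii=\sum_{k=1}^{n} i_k b^{n-k}$, and $\diam(X_\ii)=\diam(X)\cdot b^{-n}$. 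Taking $\psi(n)=b^n\phi(b^n)$ as indicated in the excerpt (and assuming $\phi$ decreases sufficiently fast that $\psi$ is monotonic, as is required to apply Theorem~\ref{thm:main}), the approximating function $\Psi(\ii)=\diam(X_\ii)\psi(|\ii|)$ collapses to the constant multiple $\diam(X)\cdot\phi(b^n)$ for each $\ii$ of length $n$.

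The key geometric observation is that the cylinder endpoints $f_\ii(0)=m_\ii/b^n$ and $f_\ii(1)=(m_\ii+1)/b^n$ are rationals with denominator $b^n\in\cA(b)$, whereas any rational $k/b^n$ with $k\notin\{m_\ii,m_\ii+1\}$ lies at distance at least $1/b^n$ from $X_\ii$. Consequently, whenever $\phi(b^n)<1/b^n$ (which holds for all large $n$ in the convergent case when $s\leq\gamma^*$, and may be ignored when $s>\gamma^*$ since there both sides of the dichotomy vanish), the only rationals of the form $p/b^n$ that can $\phi(b^n)$-approximate a point $x\in K_{J(b)}\cap X_\ii$ are $f_\ii(0)$ and $f_\ii(1)$. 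A direct computation then gives
\[\sum_{\ii\in\Lambda^*}\Psi(\ii)^s=\diam(X)^s\sum_{n=0}^{\infty}(\#J(b))^n\phi(b^n)^s=\diam(X)^s\sum_{n=0}^{\infty}(b^n)^{\gamma^*}\phi(b^n)^s,\]
so the series appearing in Theorem~\ref{thm:main} and the one in the statement above converge or diverge simultaneously.

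Both halves of the dichotomy now follow immediately. In the convergent case, the discussion above yields $W_{\cA(b)}(\phi)\cap K_{J(b)}\subseteq W(0,\widetilde{\Psi})\cup W(1,\widetilde{\Psi})$ with $\widetilde{\Psi}(\ii)=\phi(b^n)$, which still has the product form $\diam(X_\ii)\cdot\widetilde{\psi}(|\ii|)$ required by Theorem~\ref{thm:main} and satisfies $\sum\widetilde{\Psi}(\ii)^s<\infty$; applying Theorem~\ref{thm:main} to each of $W(0,\widetilde{\Psi})$ and $W(1,\widetilde{\Psi})$ gives $\cH^s(W_{\cA(b)}(\phi)\cap K_{J(b)})=0$. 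In the divergent case, the reverse containment $W(0,\Psi)\subseteq W_{\cA(b)}(\phi)\cap K_{J(b)}\subseteq K_{J(b)}$ (immediate from $\diam(X)\leq 1$, since $f_\ii(0)$ is a rational with denominator in $\cA(b)$) combined with Theorem~\ref{thm:main} applied with any open ball $B$ containing $K_{J(b)}$ sandwiches $\cH^s(W_{\cA(b)}(\phi)\cap K_{J(b)})$ between $\cH^s(W(0,\Psi))=\cH^s(K_{J(b)})$ and $\cH^s(K_{J(b)})$, yielding the claimed equality.

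The chief technical annoyance lies in the case $0\notin J(b)$ or $b-1\notin J(b)$, in which $0$ or $1$ does not belong to $X=K_{J(b)}$ and hence is not a valid target for Theorem~\ref{thm:main}. In that case I plan to replace $0$ by $(\min J(b))/(b-1)\in K_{J(b)}$ (the fixed point of $f_{\min J(b)}$) and $1$ by $(\max J(b))/(b-1)\in K_{J(b)}$; the images of these points under $f_\ii$ differ from the cylinder endpoints $f_\ii(0)$ and $f_\ii(1)$ by only $O(b^{-n})$, so the comparison between the resulting shrinking target sets and $W_{\cA(b)}(\phi)\cap K_{J(b)}$ remains valid up to multiplicative constants absorbed into $\diam(X)$, which do not affect the zero/full-measure classification.
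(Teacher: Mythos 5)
This theorem is \emph{not} proved in the paper you are working from: it is stated as Theorem~\ref{LSV theorem} and attributed to Levesley--Salp--Velani~\cite{LevSalpVel}; the paper merely records the identification of $W_{\cA(b)}(\phi)\cap K_{J(b)}$ with $W(0,\Psi)\cup W(1,\Psi)$ for $\psi(n)=b^n\phi(b^n)$ as motivation before citing the result. There is no ``paper's own proof'' to compare against. Your attempt to obtain it as a corollary of Theorem~\ref{thm:main} is a genuinely different route that the paper never takes, and it is a natural one to try since Theorem~\ref{thm:main} is designed to generalise this very statement.

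When $0\in J(b)$ and $b-1\in J(b)$, so that $0,1\in K_{J(b)}$ and are therefore legitimate targets for Theorem~\ref{thm:main}, your derivation is essentially correct. The IFS identification is right, the series bookkeeping
\[
\sum_{\ii\in\Lambda^*}\Psi(\ii)^s=\diam(X)^s\sum_{n=0}^\infty(\#J(b))^n\phi(b^n)^s=\diam(X)^s\sum_{n=0}^\infty(b^n)^{\gamma^*}\phi(b^n)^s
\]
is right, and both inclusions $W(0,\Psi)\subseteq W_{\cA(b)}(\phi)\cap K_{J(b)}$ (using $f_\ii(0)=m_\ii/b^n$ and $\diam(X)\le 1$) and $W_{\cA(b)}(\phi)\cap K_{J(b)}\subseteq W(0,\widetilde\Psi)\cup W(1,\widetilde\Psi)$ are sound.

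The final paragraph, though, contains a genuine and unrepairable gap. If $0\notin J(b)$, replacing the target $0$ by the fixed point $\kappa=\frac{\min J(b)}{b-1}\in K_{J(b)}$ introduces the displacement $|f_\ii(\kappa)-f_\ii(0)|=\kappa/b^n$, and this is an \emph{additive} error, not a multiplicative one: it cannot be absorbed into $\diam(X)$. In the divergence case with $s<\gamma^*$ it is perfectly possible to have $\sum\phi(b^n)^s(b^n)^{\gamma^*}=\infty$ while $\phi(b^n)=o(b^{-n})$ (e.g.\ $\phi(q)=q^{-\gamma^*/s}$); then $\kappa/b^n$ dwarfs the target radius $\Psi(\ii)\asymp\phi(b^n)$ and the containment $W(\kappa,\Psi)\subseteq W_{\cA(b)}(\phi)$ simply fails. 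Indeed, if $0\notin J(b)$ \emph{and} $b-1\notin J(b)$, and $\phi(b^n)<\min\{\kappa,1-\kappa'\}b^{-n}$ for all large $n$ (with $\kappa,\kappa'$ the extreme points of $K_{J(b)}$), then no $b$-adic rational of level $n$ lies within $\phi(b^n)$ of $K_{J(b)}$ at all, so $W_{\cA(b)}(\phi)\cap K_{J(b)}$ is at most finite even though the LSV series may diverge (take $b=4$, $J(b)=\{1,2\}$, $\phi(q)=q^{-2}$, $s=1/4$). Thus the divergence half really does need at least one of $0,b-1$ to lie in $J(b)$; no shifting of targets by $O(b^{-n})$ rescues the general case, and you should surface this hypothesis rather than claim a fix that dilutes a genuine obstruction into ``multiplicative constants''. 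A smaller wrinkle worth noting explicitly: Theorem~\ref{thm:main} requires $\psi(n)=b^n\phi(b^n)$ to be monotone decreasing, a strictly stronger demand than monotonicity of $\phi$ and one absent from the LSV statement, so a fully general derivation needs either an extra reduction or, for the convergence half, a direct appeal to the covering argument behind Theorem~\ref{thm:baker}(i), which does not need monotonicity.
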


The Mass Transference Principle due to Beresnevich and Velani \cite{bervel} plays a crucial role in \cite{LevSalpVel} in the proof of Theorem \ref{LSV theorem} to show the divergence part. However, interestingly, the Mass Transference Principle cannot be applied in the generality of our setting. More precisely, even in the self-similar case when the contraction ratios are inhomogeneous (that is there exist maps $f_i,f_j$ with strictly different contraction ratios in absolute value) the conditions of the Mass Transference Principle are violated, see Section~\ref{sec:nomasstrans}. That said, although it is impossible for us to apply the Mass Transference Principle in our general setting, it is worth remarking that the overarching strategy we use for proving Theorem~\ref{thm:main} is still inspired by some of the underlying ideas in the proof of the Mass Transference Principle in \cite{bervel}.

More recently, complementary to the Hausdorff dimension result of Hill and Velani \cite{HillVelanifirst}, Baker has made the first steps towards understanding the Hausdorff measure of the set $W(x,\Psi)$ on self-conformal sets. 

\begin{theorem}[Baker, \cite{baker}]\label{thm:baker}
	Let $\Phi$ be a conformal iterated function system which satisfies the open set condition and has attractor $X$. Given $x \in X$ and $\Psi: \Lambda^* \to \R^+$, let $W(x,\Psi)$ be defined as in \eqref{eq:shrinking1}.
	\begin{enumerate}[(i)]
	\item{Let $s>0$ and suppose that
		\[\sum_{\ii \in \Lambda^*}{\Psi(\ii)^s} < \infty.\]
		Then, $\cH^s(W(x,\Psi))=0$ for all $x \in X$.}
	\item{Let $\psi: \N \to \R^+$ be a monotonic decreasing approximating function and suppose that $\Psi(\ii)=\diam(X_{\ii})\psi(|\ii|)$. Then, if
		\[\sum_{\ii \in \Lambda^*}{\Psi(\ii)^{\dimh(X)}} = \infty\]
		we have
		\[\cH^{s}(W(x,\Psi^{\frac{\dimh{X}}{s}})) = \cH^{s}(X)\]
		for all $x \in X$ and $s \leq \dimh{X}$. Here, $\Psi^t(\ii)=\Psi(\ii)^t=(\diam(X_{\ii})\psi(|\ii|))^t$.}
\end{enumerate}
\end{theorem}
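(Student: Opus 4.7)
The plan is to treat the two parts of Baker's theorem separately, combining a Hausdorff--Cantelli argument for convergence with a full-measure result at the natural dimension plus the Mass Transference Principle of Beresnevich--Velani~\cite{bervel} for divergence.

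For part (i), assume $\sum_{\ii \in \Lambda^*} \Psi(\ii)^s < \infty$. For any $\delta > 0$ only finitely many $\ii$ satisfy $\Psi(\ii) \geq \delta$, since each such term contributes at least $\delta^s$ to the convergent sum. Consequently every $y \in W(x,\Psi)$ lies in $B(f_\ii(x), \Psi(\ii))$ for infinitely many $\ii$ with $\Psi(\ii) < \delta$, so the collection $\{B(f_\ii(x), \Psi(\ii)) : \Psi(\ii) < \delta\}$ is a $2\delta$-cover of $W(x,\Psi)$ whose $s$-sum is bounded by $2^s \sum_{\Psi(\ii) < \delta} \Psi(\ii)^s$, a tail of the convergent sum, which tends to zero as $\delta \to 0$. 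This yields $\mathcal{H}^s(W(x,\Psi)) = 0$ without any use of monotonicity of $\psi$ or the OSC.

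For part (ii), set $d := \dimh X$. The argument proceeds in three steps: (a) establish that the restricted Hausdorff measure $\mu := \mathcal{H}^d|_X$ is Ahlfors $d$-regular; (b) deduce $\mathcal{H}^d(W(x,\Psi)) = \mathcal{H}^d(X)$ from the divergence assumption $\sum_\ii \Psi(\ii)^d = \infty$; and (c) lift the conclusion to $\mathcal{H}^s$ via the Mass Transference Principle. Step (a) is standard: under the OSC one has $0 < \mathcal{H}^d(X) < \infty$, and using the distortion bound \eqref{f' composition bounds} together with a Moran covering argument, one obtains $c_1 r^d \leq \mu(B(z,r)) \leq c_2 r^d$ for every $z \in X$ and all sufficiently small $r > 0$. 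Step (b) is the heart of the argument: Ahlfors regularity gives $\mu(B(f_\ii(x), \Psi(\ii))) \asymp \Psi(\ii)^d$, so divergence of $\sum_\ii \Psi(\ii)^d$ translates into divergence of $\sum_\ii \mu(B(f_\ii(x), \Psi(\ii)))$. A quasi-independence estimate --- exploiting the OSC (which separates cylinders within a generation) and bounded distortion (which relates cylinder diameters to $\mathrm{diam}(X_\ii)$) --- combined with the Chung--Erd\H{o}s divergence Borel--Cantelli lemma, yields $\mu(W(x,\Psi)) = \mu(X)$, i.e.\ $\mathcal{H}^d(W(x,\Psi)) = \mathcal{H}^d(X)$. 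Monotonicity of $\psi$ enters here to organise balls by generation and control cross-level overlaps.

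Step (c) then invokes the Mass Transference Principle in its Ahlfors-regular form: given the family of balls $B_\ii := B(f_\ii(x), \Psi(\ii))$ in $\mathbb{R}^t$ with $\mathcal{H}^d(\limsup_\ii B_\ii \cap X) = \mathcal{H}^d(X)$, the rescaled limsup set $\limsup_\ii B(f_\ii(x), \Psi(\ii)^{d/s})$ has full $\mathcal{H}^s$-measure in $X$ for every $s \leq d$. Since $\Psi^{d/s}(\ii) = \Psi(\ii)^{d/s}$, this is precisely $\mathcal{H}^s(W(x,\Psi^{d/s})) = \mathcal{H}^s(X)$, as required. The principal obstacle is step (b): the quasi-independence estimate feeding the Chung--Erd\H{o}s lemma. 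In the homogeneous self-similar case all cylinders at a fixed level have equal size and the estimate is essentially routine, but in the inhomogeneous conformal setting careful book-keeping is needed to group cylinders of comparable scale, and monotonicity of $\psi$ is essential to avoid pathological clustering across generations --- which is also, ultimately, why a further ingredient beyond MTP is needed for the main Theorem~\ref{thm:main} of the present paper, where the power-scaling $d/s$ is replaced by the general weight $\Psi(\ii)^s$.
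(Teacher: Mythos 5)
This is a theorem the paper \emph{cites} from Baker \cite{baker} rather than proves; the paper's own contribution is only a one-paragraph description of Baker's strategy, namely that (i) follows from ``a standard covering argument combined with the definition of Hausdorff measure'' (see \S 3.2 of \cite{baker}), and that for (ii) ``Baker establishes a Jarn\'{\i}k-type zero-full dichotomy for the Hausdorff $\dimh{X}$-measure and, in the case $s<\dimh{X}$, Baker applies the Mass Transference Principle.'' Your proposal matches this account closely: your part (i) is exactly the Hausdorff--Cantelli covering argument (and you are right that it needs neither monotonicity of $\psi$ nor the OSC, as the paper itself notes), and your part (ii) is the same two-stage strategy --- full $\cH^d$-measure at the natural dimension via a divergence Borel--Cantelli/quasi-independence argument over the Ahlfors-regular measure $\cH^d|_X$, followed by the Mass Transference Principle in Ahlfors-regular form to descend to $s < d$. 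The internal mechanics you propose for the full-measure step (Chung--Erd\H{o}s plus quasi-independence plus a Lebesgue-density upgrade from positive to full measure) are one standard way to implement such a Jarn\'{\i}k-type dichotomy; I cannot verify from this paper alone that Baker's bookkeeping is carried out exactly this way, but at the level of detail the present paper records, your plan is the same plan. One small presentational point: for $s = \dimh X$ the conclusion is the Jarn\'{\i}k-type dichotomy itself and MTP is not invoked, which the paper flags by writing ``in the case $s<\dimh{X}$''; your phrasing ``for every $s\leq d$'' slides over this, but it is not a gap since the $s=d$ case is precisely step (b).
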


We note that Theorem \ref{thm:baker} $(i)$ holds for any iterated function systems, not just those which are conformal and satisfy the open set condition. More recently, Baker has extended Theorem \ref{thm:baker} to also deal with overlapping iterated function systems in \cite{baker2019overlapping}. To prove Theorem \ref{thm:baker} $(ii)$ Baker establishes a Jarn\'{\i}k-type zero-full dichotomy for the Hausdorff $\dimh{X}$-measure and, in the case $s<\dimh{X}$, Baker applies the Mass Transference Principle. It is important to note here the main difference between our main result (Theorem \ref{thm:main}) and Baker's result. In particular, in order to study the Hausdorff-$s$ measure of the shrinking target set in the divergence case, Baker needs to modify the radii of the targets, that is, he considers $\Psi(\ii)^t=(\diam(X_{\ii})\psi(|\ii|))^t$, while we can preserve them in the original form. Hence, our result does not follow directly from Baker's. In particular, we will show that in the case of self-similar iterated function systems with non-uniform contraction ratios that for any \mbox{$\Psi(\ii)=\diam(X_{\ii})\psi(|\ii|)$} with $\sum_{\ii\in\Lambda^*}\Psi(\ii)^s=\infty$ there is no $\Psi'$ of the form $\Psi'(\ii)=\diam(X_{\ii})\psi'(|\ii|)$ such that $\sum_{\ii\in\Lambda^*}\Psi'(\ii)^{\dimh{X}}=\infty$ and $W(x,(\Psi')^{\frac{\dimh{X}}{s}})\subseteq W(x,\Psi)$, see Section~\ref{sec:nomasstrans}.

\subsection{Structure of the paper} \label{outline}

In the next section, we introduce some further preliminaries and in Section \ref{sec:nomasstrans} we provide a more detailed discussion of why we are unable to use the Mass Transference Principle directly in the present setting, and why our result (Theorem~\ref{thm:main}) does not follow from the result due to Baker (Theorem~\ref{thm:baker}).

We present the proof of Theorem \ref{thm:main} in Sections \ref{start of proof of main theorem}--\ref{end of proof section}. The key ingredient of the proof of Theorem \ref{thm:main} is Proposition~\ref{prop:main}. In Section \ref{start of proof of main theorem}, we provide a proof of Theorem \ref{thm:main} assuming Proposition \ref{prop:main}. In Sections \ref{symbolic section}--\ref{end of proof section} we establish Proposition \ref{prop:main}.

As hinted at previously, the ideas underlying our argument share some similarities with those present in the proof of the Mass Transference Principle \cite{bervel}. Namely, Proposition~\ref{prop:main} relies on the careful construction of a suitable Cantor-type set and a measure supported on this set satisfying certain conditions which enable us to use a version of the mass distribution principle. The existence of such a Cantor set together with an appropriate measure supported on this set is guaranteed by Proposition \ref{prop:massdist}. In fact, establishing Proposition \ref{prop:massdist} is arguably the most substantial part of the proof of Theorem \ref{thm:main}.

In Section \ref{symbolic section}, we describe the set $W(x,\Psi)$ in the language of symbolic dynamics and provide a proof of Proposition \ref{prop:main} subject to Proposition \ref{prop:massdist}. In Section \ref{mass distribution section} we describe the required mass distribution, proving several technical lemmas along the way, before completing the proof of Proposition \ref{prop:massdist} in Section \ref{end of proof section} by showing that the mass distribution we have constructed satisfies the requirements of Proposition \ref{prop:massdist}. This completes the proof of Theorem \ref{thm:main}.

Finally, in Section~\ref{Bad and QI application section} we give an application of our main theorem to approximating badly approximable numbers by quadratic irrationals.

\section{Preliminaries and Notations} \label{preliminaries}

\subsection{Conformal iterated function systems}

Throughout the paper, we will always assume that the iterated function system $\Phi$ consists of $C^{1+\varepsilon}$-conformal mappings, which map the simply connected compact set $D\subset\R^t$ with $D=\overline{D^o}$ into itself. If $f\colon D\mapsto D$ is a conformal map then by the simply connectedness of $D$, for every $x,y\in D$ there exist $\xi,\xi'\in D$ such that
\begin{equation} \label{MVT}
\|f'(\xi)\|\|x-y\|\leq\|f(x)-f(y)\|\leq\|f'(\xi')\|\|x-y\|.
\end{equation}
Notice that \eqref{MVT} is just a statement of the mean value theorem.

Since we insist that it consists of $C^{1+\varepsilon}$-conformal mappings, we have that the IFS $\Phi$ satisfies the \emph{bounded distortion property}. That is, there exists a constant $C \geq 1$ such that for every $\ii\in\Lambda^*$ and for all $x,y\in X$ we have
\begin{equation}\label{eq:boundeddist}
C^{-1}\leq\dfrac{\|f_{\ii}'(x)\|}{\|f_\ii'(y)\|}\leq C.
\end{equation}
For a proof of this fact we refer the reader to work of Simon, Solomyak and Urba\'nski \cite[Lemma~5.8]{SSU1}.

Combining \eqref{MVT} with the bounded distortion property we see that, for every $Y\subseteq D$, $\xi \in X$ and $\ii \in \Lambda^*$,
\begin{align} \label{diameter comparisons}
C^{-1}\diam(f_{\ii}(Y)) \leq \|f_{\ii}'(\xi)\|\diam(Y) \leq C\diam(f_{\ii}(Y)),
\end{align}
where $C$ is the constant appearing in \eqref{eq:boundeddist}.

The \emph{pressure function} $P\colon\R\mapsto\R$ corresponding to the IFS $\Phi$ is defined as follows,
\begin{equation}\label{eq:defP}
P(s)=\lim_{n\to\infty}\frac{1}{n}\log\sum_{\ii\in\Lambda^n}\|f_{\ii}'(x)\|^s.
\end{equation}
Note that by the bounded distortion property \eqref{eq:boundeddist}, the function $P$ is independent of the choice of $x\in X$. It is easy to see that $P$ is strictly monotonically decreasing, convex and continuous.

Peres, Rams, Simon and Solomyak \cite[Theorem~1.1]{PRSS} showed that $\Phi$ satisfies the open set condition if and only if
\begin{equation}\label{eq:finitemeas}
0<\mathcal{H}^d(X)<\infty,
\end{equation}
where $d$ is the unique solution of the equation $P(d)=0$. In this case $d=\dimh X$. This result was later generalized by K\"aenm\"aki and Rossi \cite[Proposition~3.5]{KaenRoss}.

Let $\psi\colon\N\mapsto\R^+$ be a monotonically decreasing function and let us define the \emph{shrinking rate} of the function $\psi$ as
\[\alpha=\alpha(\psi)=\liminf_{n\to\infty}-\frac{\log\psi(n)}{n}.\]
Note that $\alpha$ can be $+\infty$. The following theorem can be deduced from \cite[Theorem~7]{HillVelanifirst}.

\begin{theorem}[Hill -- Velani, \cite{HillVelanifirst}]\label{thm:hillvelani} Let $\Phi$ be a conformal iterated function system which satisfies the open set condition and has attractor $X$. Let $x\in X$ and let $\psi\colon\N\mapsto\R^+$ be a monotonic decreasing function. Denote by $\alpha:=\alpha(\psi)$ the shrinking rate of $\psi$. Then, for the approximating function $\Psi(\ii)=\mathrm{diam}(X_\ii)\psi(|\ii|)$ and the set $W(x,\Psi)$, we have
$$
\dimh (W(x,\Psi))=\begin{cases}
0 & \text{if }\alpha=\infty, \\
s & \text{if }0\leq\alpha<\infty,
\end{cases}
$$
where $s$ is the unique root of the equation $P(s)=s\alpha$. 
\end{theorem}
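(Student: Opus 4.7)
My plan is to prove the theorem by bracketing $\dimh(W(x,\Psi))$ between matching upper and lower bounds, using the natural limsup structure of the set together with the Gibbs formalism on the attractor. I will handle $0 \leq \alpha < \infty$; the case $\alpha = \infty$ follows from the upper bound argument applied to every $s > 0$.

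\textbf{Upper bound.} Since $W(x,\Psi) = \bigcap_N \bigcup_{|\ii|\geq N} B(f_\ii(x),\Psi(\ii))$ is a natural limsup set, for every $N$ the family $\{B(f_\ii(x),\Psi(\ii))\}_{|\ii|\geq N}$ is an admissible cover. By the bounded distortion property \eqref{eq:boundeddist} together with \eqref{diameter comparisons}, $\sum_{|\ii|=n}\diam(X_\ii)^s \asymp \sum_{|\ii|=n}\|f_\ii'\|^s = e^{n(P(s)+o(1))}$ by the definition of the pressure \eqref{eq:defP}. By the definition of $\alpha$, for any $\alpha'<\alpha$ one has $\psi(n)^s \leq e^{-s\alpha' n}$ for all $n$ large, so
$$
\sum_{|\ii|\geq N}\Psi(\ii)^s \lesssim \sum_{n\geq N} e^{n(P(s)-s\alpha'+o(1))},
$$
which converges whenever $P(s)<s\alpha'$. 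Since $s\mapsto P(s)-s\alpha$ is continuous and strictly decreasing, letting $\alpha'\nearrow\alpha$ gives $\cH^s(W(x,\Psi))=0$ for every $s$ strictly above the unique root $s_0$ of $P(s_0)=s_0\alpha$. When $\alpha=\infty$ the sum converges for every $s>0$, yielding dimension $0$.

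\textbf{Lower bound.} Fix $s<s_0$, so that $P(s)>s\alpha$. I would invoke the Bowen--Gibbs equilibrium measure $\mu_s$ for the geometric potential $-s\log\|f'\|$, which by conformality and bounded distortion satisfies $\mu_s(X_\ii) \asymp \diam(X_\ii)^s\, e^{-|\ii|P(s)}$ uniformly in $\ii\in\Lambda^*$. Pick a sparse subsequence $(n_k)$ along which $\psi(n_k) \geq e^{-n_k(\alpha+o(1))}$, realising the $\liminf$. The idea is to build a Cantor-like subset $\mathcal K \subseteq W(x,\Psi)$ inductively: at stage $k$, inside each cylinder kept so far, use a target ball $B(f_\ii(x),\Psi(\ii))$ for appropriate $\ii$ tied to $n_k$, and select a large family of well-separated descendant cylinders $X_{\ii\jj}$ of common length $m_k \asymp |\log\psi(n_k)|/|\log a_{\max}|$, chosen so that $\diam(X_{\ii\jj}) \lesssim \Psi(\ii)$ and hence $X_{\ii\jj}$ sits inside the target. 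Via $\mu_s$, the number of such descendants one can fit is at least of order $\psi(n_k)^s e^{m_k P(s)}$, and the strict inequality $P(s)>s\alpha$ produces a genuine exponential surplus over the naive cover. Distributing mass uniformly among selected children at each stage defines a measure $\nu$ on $\mathcal K$, and the separation together with the growth rates is engineered so that $\nu(B(y,r)) \lesssim r^{s-\varepsilon}$ for all small $r$. The mass distribution principle then yields $\cH^{s-\varepsilon}(W(x,\Psi))>0$; sending $\varepsilon\to 0$ and $s\nearrow s_0$ concludes. Note that when $\alpha=0$ the root is $s_0=\dimh X$ since $P(\dimh X)=0$, consistently with the OSC characterisation \eqref{eq:finitemeas}.

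\textbf{Main obstacle.} The delicate part is the lower bound Cantor construction, where three things must be coordinated simultaneously: (i) the descendant length $m_k$ must be chosen so $X_{\ii\jj}$ really fits inside the target ball, which uses the OSC with the bounded distortion constant in \eqref{eq:boundeddist} to secure separation and enough room; (ii) enough pairwise disjoint descendants must be selected to match the Gibbs growth rate $e^{m_k P(s)}$, a quantitative counting statement for $\mu_s$; and (iii) the construction must survive the long ``idle'' stretches of generations between consecutive $n_k$'s inherent to the $\liminf$ (rather than a plain limit), where $\psi$ may dip much lower than its subsequential lower bound---one pads with trivial refinements whose measure is tracked so that the local scaling $\nu(B(y,r)) \lesssim r^{s-\varepsilon}$ is preserved across the gaps.
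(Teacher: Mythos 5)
First, a point of comparison: the paper does not prove Theorem~\ref{thm:hillvelani} at all --- it is quoted as a consequence of Hill and Velani's Theorem~7 --- so there is no internal proof to measure your argument against. Your upper bound is correct and essentially the standard one: the limsup/covering argument with the pressure asymptotics from \eqref{eq:defP} and the $\liminf$ definition of $\alpha$ does give $\cH^s(W(x,\Psi))=0$ for all $s>s_0$, and dimension $0$ when $\alpha=\infty$.

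The lower bound, however, contains a genuine gap, and it sits exactly where the real work of the theorem lives. Your counting step --- that the number of generation-$m_k$ descendants of $X_\ii$ contained in $B(f_\ii(x),\Psi(\ii))$ is at least of order $\psi(n_k)^s e^{m_k P(s)}$ ``via $\mu_s$'' --- is not justified and is problematic as stated. For $s$ strictly below $d=\dimh X$ the Gibbs measure $\mu_s$ is not $s$-Ahlfors regular (its exact dimension is $h(\mu_s)/\lambda(\mu_s)$, not $s$), so one cannot lower-bound $\mu_s(B(f_\ii(x),\Psi(\ii)))$ by a power $\Psi(\ii)^s$ of the radius in order to extract that count. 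Moreover, with non-uniform contraction ratios the generation-$m_k$ cylinders have diameters spread over $[a_{\min}^{m_k},a_{\max}^{m_k}]\cdot\diam(X_\ii)$, so fixing a common word-length $m_k=|\log\psi(n_k)|/|\log a_{\max}|$ and simultaneously (i) guaranteeing all selected children fit in the target and (ii) counting them at the Gibbs rate $e^{m_kP(s)}$ does not go through; one must instead work with a stopping-time (Moran) family such as $\Theta_r$ and count with the $d$-conformal measure, which is Ahlfors regular by \eqref{eq:finitemeas}. That count yields a mass-deficiency factor $\asymp\psi(n_k)^d$ per step, not your expression, and the critical exponent $s$ with $P(s)=s\alpha$ then emerges only from the subsequent H\"older estimate $\nu(B(y,r))\lesssim r^{s-\varepsilon}$ --- precisely the estimate you assert is ``engineered'' but never verify. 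That verification must cover all scales $r$: the intermediate scales inside a target ball (between $\Psi(\ii)$ and the size of the selected children) and the long idle stretches between $n_k$ and $n_{k+1}$, where the measure is very unevenly spread; this is where the relation between $P(s)$ and $s\alpha$ actually enters, and it is the content of the paper's own lengthy construction in Sections~\ref{mass distribution section}--\ref{end of proof section} culminating in Proposition~\ref{prop:massdist}. As written, your lower bound is a plausible programme whose decisive quantitative steps are either asserted without proof or, in the case of the $\mu_s$-based count, incorrect as stated.
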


Note that the equation $P(s)=s\alpha$ has always a unique solution $s\in[0, d]$ for every $\alpha\geq0$, since the map $s\mapsto P(s)-s\alpha$ is strictly monotonically decreasing, $P(0)=\log(\#\Lambda)$ and $P(d)-d\alpha=-d\alpha\leq0$.

\subsection{Symbolic approach}

Throughout, let $\Sigma=\Lambda^\N$ and let $\sigma\colon\Sigma\mapsto\Sigma$ denote the usual left-shift operator on $\Sigma$; namely, for $\ii = (i_1,i_2,i_3,i_4,\dots) \in \Sigma$,
\[\sigma\ii = \sigma(i_1,i_2,i_3,i_4,\dots) = (i_2,i_3,i_4,\dots).\]
For $\ii=(i_1,\ldots,i_k)\in\Lambda^*$, the cylinder set $[\ii]$ is defined as
$$
[\ii]=[i_1,\ldots,i_k]:=\{\jj\in\Sigma:i_1=j_1,\ldots,i_k=j_k\}.
$$
By convention, $[\emptyset]=\Sigma$. For a sequence $\ii\in\Sigma$ and $n, m \in \N$ with $n\leq m$, let
\[\ii|_n^m:=(i_n,\ldots,i_m).\]
For $n>m$ we define $\ii|_n^m=\emptyset$. Next, for $\ii,\jj\in\Sigma$ let $|\ii\wedge\jj|=\min\{k\geq1:i_k\neq j_k\}-1$ and let $\ii\wedge\jj=i_1,\ldots,i_{|\ii\wedge\jj|}$ be the common part of $\ii$ and $\jj$. If $|\ii\wedge\jj|=0$ then we define $\ii\wedge\jj$ as the empty word. 

For any $\alpha\in[0,\infty)$, there exists a unique ergodic $\sigma$-invariant probability measure, $\mathbb{P}$, and a constant $C \geq 1$ such that for every $\ii\in\Lambda^*$ and $x\in X$
\begin{equation}\label{eq:measP}
C^{-1}\leq\dfrac{\mathbb{P}([\ii])}{e^{-\alpha s|\ii|}\|f_{\ii}'(x)\|^s}\leq C,
\end{equation}
where $s$ is the unique solution of the equation $P(s)=s\alpha$, see for example \cite[Theorem~1.2]{Bowen}.

The elements of $\Sigma$ and $X$ can be associated in a natural way. More precisely, for every $\ii\in\Sigma$ let
$$
\pi(\ii)=\lim_{n\to\infty}f_{i_1}\circ\cdots\circ f_{i_n}(0).
$$
We call the function $\pi\colon\Sigma\mapsto X$ the \emph{natural projection}. It is easy to see that $\pi(\ii)=f_{i_1}(\pi(\sigma\ii))$. In particular, for any $n \in \N$, we have
\begin{align} \label{pi composition equality}
\pi(\ii)=f_{\ii|_1^n}(\pi(\sigma^n\ii)).
\end{align}

For a probability measure $\mu$ on $\Sigma$, we will denote by $\pi_*\mu$ the \emph{pushforward} of the measure $\mu$; that is, for a set $A \subseteq X$ we have $\pi_*\mu(A)=\mu(\pi^{-1}(A))$.

\section{A Limitation of the Mass Transference Principle}\label{sec:nomasstrans}

Before we prove the main result of this paper, we show two phenomena. First, we show that the Mass Transference Principle, originally introduced by Beresnevich and Velani in~\cite{bervel}, is not applicable if the underlying iterated function system consists of similarities with different contraction ratios, by showing that the assumptions of the Mass Transference Principle are violated in this case. Secondly, we show that Theorem \ref{thm:main} cannot be deduced from Baker's result Theorem~\ref{thm:baker}~\cite{baker}.

More precisely, in this section, we consider iterated function systems of the form
\begin{equation}\label{eq:IFS}
\Phi=\{f_i\colon x\in\R^t\mapsto a_iO_ix+b_i\}_{i\in\Lambda},
\end{equation}
where $a_i \in (0,1)$, $b_i \in \R^t$, and $O_i$ is a rotation. We will be particularly interested in the case when there exist $i, j \in \Lambda$ such that $a_i \neq a_j$. We note that for iterated function systems of the form in \eqref{eq:IFS}, the pressure equation from \eqref{eq:defP} simplifies to 
\begin{align}\label{self-similar pressure}
P(s)=\log\left(\sum_{i\in\Lambda}a_i^s\right).
\end{align} 
In this case, we also note that the value of $s$ appearing in the Hausdorff dimension result due to Hill and Velani (Theorem \ref{thm:hillvelani}) is the unique solution of $\sum_{i\in\Lambda}a_i^se^{-s\alpha}=1$.

Before proceeding, we state a corresponding version of the Mass Transference Principle which is the most relevant to our current setting. The statement we give below, Theorem~\ref{mtp}, can be deduced from \cite[Theorem 3]{bervel}, which was the result used by Levesley, Salp, and Velani in \cite{LevSalpVel} to study Diophantine approximation on the middle-third Cantor set. Indeed, since its initial discovery, the Mass Transference Principle has become a widely used tool with profound consequences Diophantine approximation.

%be a monotonic decreasing function and denote by $\alpha(\psi)$ the shrinking rate of $\psi$. Let $s$ be the solution to $\sum_{i\in\Lambda}e^{-\alpha s}a_i^s=1$ and 

\begin{theorem}[Beresnevich -- Velani, \cite{bervel}] \label{mtp}
Let $\Phi$ be a conformal iterated function system of the form \eqref{eq:IFS} which satisfies the open set condition and has attractor $X$. Let us write $d=\dimh X$ and let $s\in(0,d]$ be arbitrary. Let $\psi\colon\N\mapsto\R^+$ be an arbitrary bounded map. For $\Psi(\ii)=\mathrm{diam}(X_\ii)\psi(|\ii|)$ and $x\in X$, denote by $W(x,\Psi)$ the set defined in \eqref{eq:shrinking1}. If, for any ball $B\subset\R^t$,
	$$
	\mathcal{H}^d(B\cap W(x,\Psi^{s/d}))=\mathcal{H}^d(B\cap X)
	$$
	then, for any ball $B\subset\R^t$, $$\mathcal{H}^s(B\cap W(x,\Psi))=\mathcal{H}^s(B\cap X).$$
\end{theorem}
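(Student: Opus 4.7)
The plan is to derive Theorem~\ref{mtp} as a direct application of the ``fractal'' Mass Transference Principle of \cite[Theorem~3]{bervel}, once the shrinking target set has been recast as a limsup of balls centred on $X$. Since $\psi$ is bounded and $\diam(X_\ii)\le C\,a_{\max}^{|\ii|}\to 0$ by \eqref{f' composition bounds}, I would first enumerate $\Lambda^*$ as a sequence $(\ii_n)$ with $|\ii_n|\to\infty$, set $y_n:=f_{\ii_n}(x)\in X$ and $r_n:=\Psi(\ii_n)$, and rewrite
\[W(x,\Psi)=X\cap\limsup_{n\to\infty}B(y_n,r_n),\qquad W(x,\Psi^{s/d})=X\cap\limsup_{n\to\infty}B(y_n,r_n^{s/d}).\]

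The next step is to verify the structural hypothesis required by the MTP, namely that $\mathcal{H}^d|_X$ is Ahlfors $d$-regular. For an IFS of the form \eqref{eq:IFS} satisfying the OSC, $\mathcal{H}^d(X)$ is positive and finite by \eqref{eq:finitemeas}, and a standard self-similarity argument yields constants $c_1,c_2>0$ such that $c_1 r^d\le\mathcal{H}^d(B(y,r)\cap X)\le c_2 r^d$ for every $y\in X$ and $0<r\le\diam(X)$. This places us in the framework of \cite[Theorem~3]{bervel}: the centres $y_n$ lie in $\supp(\mathcal{H}^d|_X)=X$, the radii $r_n$ tend to zero, and for the gauge $g(r)=r^s$ the required monotonicity of $r^{-d}g(r)=r^{s-d}$ holds since $s\le d$. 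The ``inflated'' balls used by that theorem have radius $g(r_n)^{1/d}=r_n^{s/d}$, so its hypothesis reads exactly as our assumption that $\mathcal{H}^d(B\cap W(x,\Psi^{s/d}))=\mathcal{H}^d(B\cap X)$ for every ball $B\subset\R^t$.

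Applying \cite[Theorem~3]{bervel} then yields $\mathcal{H}^s(B\cap W(x,\Psi))\ge\mathcal{H}^s(B\cap X)$ for every ball $B$, and the reverse inequality is trivial since $W(x,\Psi)\subseteq X$ by definition \eqref{eq:shrinking1}. The only delicate point in this deduction is the simultaneous verification of Ahlfors regularity and of the centring of the balls in $\supp(\mathcal{H}^d|_X)$; once both are in hand, the rescaling $r\mapsto r^{s/d}$ built into the MTP transports the $d$-dimensional full-measure assumption on $X$ directly into the desired $s$-dimensional conclusion, requiring no further measure-theoretic work.
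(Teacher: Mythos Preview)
Your proposal is correct and matches precisely what the paper indicates: the authors do not give a proof of Theorem~\ref{mtp} but state that it ``can be deduced from \cite[Theorem~3]{bervel}'', which is exactly the deduction you carry out by recasting $W(x,\Psi)$ as a limsup of balls centred on $X$, verifying Ahlfors $d$-regularity of $\mathcal{H}^d|_X$ for a self-similar set with the OSC, and then invoking the fractal Mass Transference Principle with gauge $g(r)=r^s$.
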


It follows from the result of Hill and Velani (Theorem~\ref{thm:hillvelani}) that $\dimh W(x,\Psi)$ is the unique solution of the equation $P(s)=\alpha(\psi) s$, where $\alpha(\psi)$ is the shrinking rate of $\psi$. Hence, the case when the value of the Hausdorff measure is in question is exactly at this choice of $s$. We show that if $0<\alpha(\psi)<\infty$ and there exist $i,j \in \Lambda$ such that $a_i \neq a_j$ then the $d$-dimensional Hausdorff measure of the limsup set $W(x,\Psi^{s/d})$ would be zero at this critical choice of $s$. In such cases the Theorem~\ref{mtp} is not useful.

For simplicity, we will work with iterated function systems $\Phi = \{f_i\}_{i \in \Lambda}$ satisfying the \emph{strong separation condition} (SSC). That is, we assume that
\[
f_i(X)\cap f_j(X)=\emptyset\text{ for every }i\neq j \in \Lambda,
\]
where $X$ is the attractor of $\Phi$. Clearly any IFS satisfying the SSC also satisfies the OSC.

\begin{prop}\label{prop:nogood1}
	Let $\Phi$ be a conformal iterated function system of the form \eqref{eq:IFS} which satisfies the strong separation condition and has attractor $X$. Suppose that there exist $i,j\in\Lambda$ such that $a_i\neq a_j$. Let $\psi\colon\N\mapsto\R^+$ be a monotonic decreasing function with shrinking rate $\alpha(\psi)\in(0,\infty)$. Let $s$ be the solution to $\sum_{i\in\Lambda}e^{-\alpha s}a_i^s=1$ and write $d=\dimh X$. In this case it is well-known that $d$ is the unique solution of $\sum_{i\in\Lambda}a_i^d=1$.
	
	Let $x\in X$ and let $\Psi(\ii)=\mathrm{diam}(X_\ii)\psi(|\ii|)$. Then,
	$$
	\mathcal{H}^d(W(x,\Psi^{s/d}))=0.
	$$
\end{prop}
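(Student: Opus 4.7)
The plan is the following. Note first that a direct application of Hausdorff-Cantelli (equivalently, Baker's Theorem~\ref{thm:baker}(i)) to the balls $\{B(f_\ii(x),\Psi(\ii)^{s/d})\}_{\ii\in\Lambda^*}$ does not suffice. One computes
$$\sum_{|\ii|=n}\Psi(\ii)^s \;=\; \psi(n)^s\,\Bigl(\sum_{i\in\Lambda}a_i^s\Bigr)^n\diam(X)^s\;=\;\psi(n)^s\, e^{\alpha s n}\diam(X)^s,$$
and since $\psi(n)^s e^{\alpha s n}$ stays bounded away from $0$ along the subsequence realising the $\liminf$ defining $\alpha$, the total series $\sum_\ii\Psi(\ii)^s$ can diverge. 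So the inhomogeneity of the $a_i$'s has to enter the argument in a nontrivial way.

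My plan is to pass to the $d$-dimensional self-similar measure $\mu_d$, namely the pushforward under $\pi$ of the Bernoulli measure on $\Sigma$ with weights $(a_i^d)_{i\in\Lambda}$. Under SSC this measure is Ahlfors $d$-regular and comparable to $\mathcal{H}^d|_X$, so it suffices to establish $\sum_n\mu_d(A_n)<\infty$ for $A_n:=\bigcup_{|\ii|=n}B(f_\ii(x),\Psi(\ii)^{s/d})\cap X$ and apply Borel-Cantelli. Pulling $A_n$ back to $\Sigma$ via $\pi$, I would decompose $\pi^{-1}(A_n)$ according to whether the witnessing $\ii\in\Lambda^n$ is a prefix of the code $\underline i\in\Sigma$ of the target point $y=\pi(\underline i)$ (the \emph{prefix case}) or not (the \emph{non-prefix case}, in which SSC supplies a geometric lower bound on $\|\pi\underline i-f_\ii(x)\|$).

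For the prefix case, shift-invariance of the Bernoulli measure together with Ahlfors regularity gives
$$\mu_d(\text{prefix at level }n)\;\lesssim\;\sum_{|\ii|=n}\min\bigl(a_\ii^d,\;\Psi(\ii)^s\bigr),$$
and splitting the sum according to whether $a_\ii$ is at most or greater than $\psi(n)^{s/(d-s)}$ reduces each piece to a Bernoulli large-deviation probability for $-\log a_\ii/n$. The hypothesis that the $a_i$'s are not all equal is exactly what is needed: strict convexity of the pressure $F(q)=\log\sum_i a_i^q$, combined with $F(s)=\alpha s$, $F(d)=0$, and the tangent-line inequalities at $d$ and $s$, produces the strict chain of inequalities
$$\chi_d\;:=\;-\sum_{i\in\Lambda} a_i^d\log a_i \;<\; \frac{\alpha s}{d-s} \;<\; -e^{-\alpha s}\sum_{i\in\Lambda} a_i^s\log a_i\;=:\;\chi_s,$$
so the splitting threshold $\alpha s/(d-s)$ lies strictly between the typical rates under the $a_i^d$- and $a_i^s/e^{\alpha s}$-Bernoulli measures. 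Cramér's theorem then forces exponential decay of each sub-sum, combined with the crude bound $\psi(n)^s e^{\alpha s n}\leq e^{\epsilon s n}$ coming from the $\liminf$ definition of $\alpha$ for suitably small $\epsilon>0$.

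For the non-prefix case, SSC forces $\|\pi\underline i-f_\ii(x)\|\geq c_0\, a_{\underline i\wedge\ii}$ whenever $\ii$ is not a prefix of $\underline i$, so such a ball contributes only when $a_{\underline i\wedge\ii}\lesssim\Psi(\ii)^{s/d}$. Partitioning by the common prefix length $l=|\underline i\wedge\ii|$, applying shift-invariance again to the prefix $\ii|_1^l$, and exploiting the same strict inequalities via a further large-deviation estimate (with an appropriate choice of exponent in a Markov-type bound so as to balance the $l$- and $(n-l)$-contributions), one shows that only atypical $\underline i$ can be caught by such unaligned balls, and that the total non-prefix contribution at level $n$ also decays exponentially. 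I expect this non-prefix estimate to be the main technical obstacle, since it requires careful bookkeeping of contributions across all possible common prefix lengths. Summing both cases, Borel-Cantelli gives $\mu_d(\limsup_n A_n)=0$, and hence $\mathcal{H}^d(W(x,\Psi^{s/d}))=0$.
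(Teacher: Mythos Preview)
Your approach is correct and rests on the same key strict inequality as the paper --- namely $\chi_d < \frac{\alpha s}{d-s}$, which is equivalent to the positivity of the Kullback--Leibler divergence $D_{KL}(\lambda\|\PP)=(s-d)\chi_d+s\alpha$ that the paper invokes --- but the execution is genuinely different. The paper does not do any Cram\'er-type large-deviation bookkeeping. Instead it restricts at the outset to the Birkhoff-typical set $F=\{\jj:\lim_n(-\tfrac1n\log a_{\jj|_1^n})=\chi_d\}$ of full $\mu_d$-measure, and then shows (Claim~1) that for $\jj\in F\cap\widehat W$ the non-prefix case eventually does not occur: the separation inequality $\delta a_{\jj\wedge\ii\x}\le a_\ii^{s/d}\psi(|\ii|)^{s/d}$ combined with $\jj\in F$ would force $(s/d-1)\chi_d+(s/d)\alpha\le 0$, contradicting $D_{KL}>0$. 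This disposes of your ``main technical obstacle'' in a couple of lines without summing anything. For the prefix case the paper applies Egorov's theorem to upgrade Birkhoff convergence to uniform convergence on a set $E$ with $\mu_d(E)>1-\varepsilon$, so that on $E$ the radii $a_{\jj|_1^n}^{s/d-1}\psi(n)^{s/d}$ are dominated by $e^{-nD_{KL}/(2d)}$; a single convergent Borel--Cantelli then finishes.

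What each approach buys: the paper's Birkhoff--Egorov route avoids all the case-splitting over $l$ and over the location of $-\log\psi(n)/n$ relative to the thresholds, and needs only the first inequality $\chi_d<\alpha s/(d-s)$ (your second inequality $\alpha s/(d-s)<\chi_s$ is not used). Your route is more quantitative --- it yields explicit exponential bounds on $\mu_d(A_n)$ rather than merely $\mu_d(\limsup A_n)=0$ --- at the cost of the heavier computation you anticipate. Both are valid; the paper's is shorter.
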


\begin{proof}
	Without loss of generality, we may assume that $\diam(X)=1$. Throughout the proof, let $\x$ be the unique coding of $x$, i.e. $\pi(\x)=x$. The uniqueness of this encoding is guaranteed by the strong separation condition.
	
	Let us define $\lambda$ to be the natural $\sigma$-invariant ergodic probability measure on $\Sigma$, whose projection is equivalent to $\mathcal{H}^d|_X$. That is, $\pi_*\lambda=\lambda\circ\pi^{-1}=\frac{\mathcal{H}^d|_X}{\mathcal{H}^d(X)}$. Moreover, for each $\ii=(i_1,\ldots,i_n)\in\Lambda^*$,
	$$
	\lambda([i_1,\ldots,i_n])=(a_{\ii})^d.
	$$
	Here we use the notation $a_{\ii}=a_{i_1}a_{i_2}\dots a_{i_{|\ii|}}$.
	
	Let $\chi:=-\sum_{i\in\Lambda}a_i^d\log a_i=-\int\log a_{i_1}d\lambda(\ii)$ and let
	$$
	F=\left\{\ii\in\Sigma:\lim_{n\to\infty}\frac{-1}{n}\log a_{\ii|_1^n}=\lim_{n\to\infty}\frac{-1}{n}\sum_{k=1}^n\log a_{i_k}=\chi\right\}.
	$$
	By Birkhoff's Ergodic Theorem (see, for example, \cite[Theorem 2.30]{EinsiedlerWard} or \cite[Theorem~1.14]{Walters}), $\lambda(F)=1$.
	
	Now, let
	$$
	\widehat{W}(x,\Psi^{s/d}):=\left\{\jj\in\Sigma:\|\pi(\jj)-\pi(\ii\x)\|< (a_\ii)^{s/d}\psi(|\ii|)^{s/d}\text{ for infinitely many }\ii\in\Lambda^*\right\}.
	$$
	Since $\Phi$ satisfies the strong separation condition and hence points in the symbolic space $\Sigma$ uniquely encode points in $X$, we have that
	\begin{equation}\label{eq:proj}\pi \widehat{W}(x,\Psi^{s/d}) = W(x,\Psi^{s/d}).\end{equation}
	Thus, since $\lambda(F)=1$, it is sufficient for us to show that $\lambda(\widehat{W}(x,\Psi^{s/d})\cap F)=0$. We make the following claim.
	
	\begin{claim}\label{c:1} If $\jj\in\widehat{W}(x,\Psi^{s/d})\cap F$ then  $|\jj\wedge\ii\x|\geq|\ii|$ for infinitely many $\ii\in\Lambda^*$ such that $\|\pi(\jj)-\pi(\ii\x)\|< (a_\ii)^{s/d}\psi(|\ii|)^{s/d}$. Recall that $\jj\wedge\ii\x$ is the common part of $\jj$ with the concatenation $\ii\x$.
	\end{claim}
	
	\begin{proof}[Proof of Claim \ref{c:1}] Suppose to the contrary that there exists $N=N(\jj)\geq1$ such that if $|\ii|>N$ and $\|\pi(\jj)-\pi(\ii\x)\|\leq (a_\ii)^{s/d}\psi(|\ii|)^{s/d}$ then $|\jj\wedge\ii\x|<|\ii|$.
		
		Let 
\[\delta=\min_{i\neq j}d(f_i(X),f_j(X))>0\] 
where, for subsets $A,B\subset \R^t$, $d(A,B)=\min\{\|a-b\|:a \in A \text{ and } b \in B\}$. 
Then from the definitions of the common part $\ii \wedge \jj$ and $\delta>0$ and \eqref{pi composition equality} we have
		\begin{align*}
		\delta a_{\jj\wedge\ii\x}&\leq a_{\jj\wedge\ii\x}\|\pi(\sigma^{|\jj\wedge\ii\x|}\jj)-\pi(\sigma^{|\jj\wedge\ii\x|}\ii\x)\| \\
		&=\|f_{\jj\wedge\ii\x}(\pi(\sigma^{|\jj\wedge\ii\x|}\jj))-f_{\jj\wedge\ii\x}(\pi(\sigma^{|\jj\wedge\ii\x|}\ii\x))\|\\
		&=\|\pi(\jj)-\pi(\ii\x)\| \\
		&\leq (a_\ii)^{s/d}\psi(|\ii|)^{s/d}.
		\end{align*}
		The last inequality above holds by assumption.
		
		Since $s<d$ and we are assuming that $|\jj \wedge \ii\x| < |\ii|$, it follows from the previous inequality that
		$$
		\delta\leq (a_{\jj\wedge\ii\x})^{s/d-1}(a_{\sigma^{|\jj\wedge\ii\x|}\ii})^{s/d}\psi(|\ii|)^{s/d}\leq(a_{\jj\wedge\ii\x})^{s/d-1}\psi(|\ii|)^{s/d}\leq (a_{\jj|_1^{|\ii|}})^{s/d-1}\psi(|\ii|)^{s/d}.
		$$
		Thus, since $\jj\in F$, we have
		\begin{equation}\label{eq:contbaound}
		0\geq\liminf_{n\to\infty}\frac{-1}{n}\log\left((a_{\jj|_1^{n}})^{s/d-1}\psi(n)^{s/d}\right)=\left(\frac{s}{d}-1\right)\chi+\frac{s}{d}\alpha.
		\end{equation}
		
		Let
		\[D_{KL}(\lambda\|\PP):=\sum_{i\in\Lambda}\lambda(i)\log\frac{\lambda(i)}{\PP(i)}\] denote the \emph{Kullback-Leibler divergence} (or \emph{relative entropy}) of the measure $\lambda$ with respect to $\PP$, where $\PP$ is the measure defined in \eqref{eq:measP}. See \cite[Section 2.6]{MacKay} for a definition of Kullback-Leibler divergence. It is a property of the Kullback-Leibler divergence that $0\leq D_{KL}(\lambda\|\PP)$ and $0=D_{KL}(\lambda\|\PP)$ if and only if $\lambda=\PP$.
		
		Since $\alpha>0$, we have that $s<d$. By the assumption that there exist contraction ratios $a_i\neq a_j$, it follows that there exists $i\in\Lambda$ such that $e^{\alpha s}a_i^s\neq a_i^d$. Indeed, otherwise we would have that $a_i^{d-s}=e^{\alpha s}=a_j^{d-s}$ for every $i,j\in\Lambda$, which is impossible. Thus, $\lambda\neq\PP$ and it follows from the definitions of the measures $\PP$ and $\lambda$ that
		\begin{align} \label{KL equality}
		0<D_{KL}(\lambda\|\PP)=\left(s-d\right)\chi+s\alpha,
		\end{align}
		but this contradicts \eqref{eq:contbaound}.
	\end{proof}
	
	Claim~\ref{c:1} combined with \eqref{pi composition equality} implies that for every $\jj\in \widehat{W}(x,\Psi^{s/d})\cap F$ there are infinitely many $\ii\in\Lambda^*$ such that $\jj|_1^{|\ii|}=\ii$ and
	$$
	(a_{\ii})^{s/d}\psi(|\ii|)^{s/d}>\|\pi(\jj)-\pi(\ii\x)\|=\|f_{\ii}(\pi(\sigma^{|\ii|}\jj))-f_\ii(\pi(\x))\|=a_{\ii}\|\pi(\sigma^{|\ii|}\jj)-\pi(\x)\|.
	$$
	Hence,
	\begin{equation}
	\widehat{W}(x,\Psi^{s/d})\cap F \subseteq \{\ii\in\Sigma:\|\pi(\sigma^n\ii)-x\|<(a_{\ii|_1^n})^{s/d-1}\psi(n)^{s/d}\text{ for infinitely many }n \in \N\}\cap F.
	\end{equation}
	
	By the definition of $F$, it follows from Egorov's Theorem (see, for example, \cite[Theorem~12.1]{Khar}) that, for every $\varepsilon>0$ there exists a set $E\subset F$ such that
	$\lambda(E)>1-\varepsilon$ and the sequence of functions $\ii\mapsto\frac{-1}{n}\log a_{\ii|_1^n}$ converges uniformly to $\chi$ on $E$.
	
	In particular, there exists a natural number $N=N(E)$ such that for every $\ii\in E$ and every $n\geq N$
	$$
	a_{\ii|_{1}^{n}}^{s/d-1}\psi(n)^{s/d} < e^{-\frac{n}{2d}D_{KL}(\lambda\|\PP)}.
	$$
	To see this, recall the right-hand equalities of \eqref{eq:contbaound} and \eqref{KL equality}.
	Thus,
	\begin{align*}
	\widehat{W}(x,\Psi^{s/d})\cap E&\subseteq W'':=\{\jj \in \Sigma :\|\pi(\sigma^n\jj)-x\|< e^{-\frac{n}{2d}D_{KL}(\lambda\|\PP)}\text{ for infinitely many } n \in \N\}\\
	&=\{\jj \in \Sigma :\|\pi(\jj)-f_{\ii}(x)\|< a_{\ii} e^{-\frac{|\ii|}{2d}D_{KL}(\lambda\|\PP)}\text{ for infinitely many } \ii \in \Lambda^*\}.
	\end{align*}
	Since
	\[\sum_{\ii\in\Lambda^*}(a_{\ii} e^{-\frac{|\ii|}{2d}D_{KL}(\lambda\|\PP)})^d=\sum_{n=1}^\infty e^{-nD_{KL}(\lambda\|\PP)/2}\left(\sum_{i\in\Lambda}a_i^d\right)^n=\sum_{n=1}^\infty e^{-nD_{KL}(\lambda\|\PP)/2}<\infty,\]
	we have that $\lambda(W'')=\mathcal{H}^d(\pi(W''))=0$ by Theorem~\ref{thm:baker}. Hence, by \eqref{eq:proj} and the fact that $\lambda(F)=1$ we have
	\begin{align*}
	\mathcal{H}^d(W(x,\Psi^{s/d}))&=\mathcal{H}^d(X)\cdot\lambda(\widehat{W}(x,\Psi^{s/d}))\\
	&=\mathcal{H}^d(X)\cdot\lambda(\widehat{W}(x,\Psi^{s/d})\cap F)\\
	&\leq \mathcal{H}^d(X)\cdot\left(\lambda(W'')+\lambda(F\setminus E)\right) \\
	&<\mathcal{H}^d(X)\cdot\varepsilon.
	\end{align*}
	Finally, since $\varepsilon>0$ was arbitrary and $\cH^d(X)$ is finite by \eqref{eq:finitemeas}, the statement follows.
\end{proof}

{ Although Baker's result Theorem~\ref{thm:baker}(ii) relies on the Mass Transference Principle, {\it a priori} it might happen that for a given function $\psi$ and $s>0$ with $\sum_{\ii\in\Lambda^*}\Psi(\ii)^s=\infty$ one could construct another function $\overline{\psi}$ for which $\sum_{\ii\in\Lambda^*}\overline{\Psi}(\ii)^d=\infty$ implies $\mathcal{H}^s(W(x,\Psi))=\infty$ for any $x\in X$. One way to show this for example would be to show the containment $W(x,(\overline{\Psi})^{d/s})\subseteq W(x,\Psi)$ and apply Theorem~\ref{thm:baker}(ii). However, the next proposition shows that such a containment is not possible in some cases.}

\begin{prop}\label{prop:nogood2}
	Let $\Phi$ be a conformal iterated function system of the form \eqref{eq:IFS} which satisfies the strong separation condition and has attractor $X$. Write $d=\dimh X$. Suppose that there exist $i,j\in\Lambda$ such that $a_i\neq a_j$. Let $\psi\colon\N\mapsto\R^+$ be a monotonic decreasing function { such that the shrinking rate is achieved as a limit, i.e. 
	$$
0<\alpha(\psi)=\lim_{n\to\infty}\frac{-\log\psi(n)}{n}<d.
$$ Moreover, suppose that $\sum_{\ii\in\Lambda^*}\Psi(\ii)^s=\infty$, where $s$ is the unique solution of the equation $\sum_{i\in\Lambda}e^{-\alpha s}a_i^s=1$ and $\Psi(\ii)=\mathrm{diam}(X_\ii)\psi(|\ii|)$.} Then there exists $x\in X$ such that there is no function $\overline{\Psi}\colon\Lambda^*\mapsto\R^+$ of the form $\overline{\Psi}(\ii)=\mathrm{diam}(X_\ii)\overline{\psi}(|\ii|)$, where $\overline{\psi}\colon\N\mapsto\R^+$ is such that $\alpha(\overline{\psi})=\liminf_{n\to\infty}\frac{-\log\overline{\psi}(n)}{n}\geq0$ and $\sum_{\ii\in\Lambda^*}\overline{\Psi}(\ii)^d=\infty$ and $W(x,(\overline{\Psi})^{d/s})\subseteq W(x,\Psi)$.
\end{prop}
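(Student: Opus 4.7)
The plan is to argue by contradiction: suppose the alleged $\overline{\psi}$ exists and directly produce a point $\pi(\y) \in W(x,(\overline{\Psi})^{d/s}) \setminus W(x,\Psi)$. Working symbolically, as enabled by the SSC and the bijection $\pi\colon \Sigma \to X$, I set $\x := \pi^{-1}(x)$ and, for $\y \in \Sigma$, $m_n := |\sigma^n \y \wedge \x|$. Exactly as in the proof of Proposition~\ref{prop:nogood1}, whenever $\y|_1^{|\ii|}=\ii$, $\|\pi(\y) - \pi(\ii\x)\| \asymp a_\ii\, a_{\x|_1^{m_{|\ii|}}}$; otherwise $\|\pi(\y) - \pi(\ii\x)\| \gtrsim a_{\y\wedge \ii}$. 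A short separation-based argument (using $\psi(n),\overline{\psi}(n)\to 0$ on the indices that carry positive mass, together with the uniform gap $\delta>0$) shows that only $\ii$'s which are prefixes of $\y$ contribute infinitely often, so $\y \in \widehat{W}(x,\Psi):=\pi^{-1}(W(x,\Psi))$ iff $a_{\x|_1^{m_n}} \lesssim \psi(n)$ for infinitely many $n$, and $\y \in \widehat{W}(x,(\overline{\Psi})^{d/s})$ iff $a_{\x|_1^{m_n}} \lesssim a_{\y|_1^n}^{d/s-1}\overline{\psi}(n)^{d/s}$ for infinitely many $n$.

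I would then fix $x$ whose coding $\x \in \Sigma$ satisfies two generic properties: (a) $\chi(\x) := \lim_n -\tfrac{1}{n}\log a_{\x|_1^n} = \chi := -\sum_i a_i^d \log a_i$ (holds on a full-$\lambda$-measure set by Birkhoff); and (b) the autocorrelation $|\sigma^j \x \wedge \x|$ is $O(\log j)$ (standard recurrence for the Bernoulli shift under $\lambda$). Now suppose the alleged $\overline{\psi}$ exists. Combining $\sum_n \overline{\psi}(n)^d = \infty$ with $\sum_n e^{-n D_{KL}(\lambda\|\PP)} < \infty$, where $D_{KL}(\lambda\|\PP) = s\alpha - (d-s)\chi > 0$ was computed in Proposition~\ref{prop:nogood1}, one forces the existence of infinitely many $n$ satisfying $-\log \overline{\psi}(n) < n D_{KL}(\lambda\|\PP)/d - C$ for any fixed $C$. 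The algebraic identity $\tfrac{\alpha}{\chi} - \tfrac{d-s}{s} = \tfrac{D_{KL}(\lambda\|\PP)}{s\chi}$ shows this is exactly the condition that the open interval
\[I_n := \left(\tfrac{d-s}{s}\,n + \tfrac{d}{s\chi}(-\log \overline{\psi}(n)),\ \tfrac{\alpha}{\chi}\,n\right)\]
contains an integer, so I obtain a subsequence $(n_k)$ with $n_{k+1}\gg n_k$ along which we can pick $m_k \in I_{n_k}\cap\N$.

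I then construct $\y \in \Sigma$ block-by-block: choose $\y|_1^{n_k}$ to be any $\lambda$-generic word so that $-\log a_{\y|_1^{n_k}} = n_k\chi + o(n_k)$; set $\y|_{n_k+1}^{n_k + m_k} = \x|_1^{m_k}$; enforce $\y_{n_k + m_k + 1} \neq \x_{m_k + 1}$; and fill the gaps between blocks with a word avoiding long sub-patterns of $\x$. The placement of $m_k$ in $I_{n_k}$ translates precisely into
\[\Psi(\y|_1^{n_k}) \;<\; \|\pi(\y) - \pi(\y|_1^{n_k}\x)\| \;<\; \overline{\Psi}(\y|_1^{n_k})^{d/s},\]
so the prefixes $\y|_1^{n_k}$ are witnesses for $\widehat{W}(x,(\overline{\Psi})^{d/s})$ but \emph{not} for $\widehat{W}(x,\Psi)$. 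Meanwhile property (b) together with the generic choice of fillers forces $m_n = O(\log n)$ uniformly for $n$ outside the privileged subsequence, keeping $a_{\x|_1^{m_n}}$ well above $\psi(n) \sim e^{-\alpha n}$ and ruling out these indices as witnesses for $\widehat{W}(x,\Psi)$ as well. Thus $\pi(\y) \in W(x,(\overline{\Psi})^{d/s}) \setminus W(x,\Psi)$, contradicting the assumed containment.

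The principal technical obstacle is verifying $m_n = o(n)$ uniformly for $n$ outside $(n_k)$: this requires quantitative recurrence for $\x$ (to control $|\sigma^j\x\wedge\x|$ both when $n$ lies inside a block $[n_k, n_k+m_k]$ and when it lies in the fillers), along with careful design of the fillers so that accidental long alignments of $\sigma^n\y$ with $\x$ do not occur. Beyond this, the argument is essentially the same Kullback--Leibler bookkeeping already performed in the proof of Proposition~\ref{prop:nogood1}, applied along the specially chosen subsequence $(n_k)$.
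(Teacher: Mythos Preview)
Your strategy is sound and would ultimately work, but the paper's proof is dramatically simpler because of one key choice you did not make: rather than taking $\x$ to be a $\lambda$-generic point with controlled autocorrelation, the paper takes $\x=(2,2,2,\ldots)$, the fixed point of $f_2$, and builds the counterexample $\ii$ using only the two symbols $1$ and $2$ (with $a_1=\max_i a_i$), placing blocks of $2$'s of a prescribed length $\ell_k$ at positions $n_k$ and filling the gaps with $1$'s. This single choice eliminates your ``principal technical obstacle'' entirely: since $x_1=2$ and the filler symbol is $1\neq 2$, any shift $\sigma^n\ii$ landing in a filler region has zero overlap with $\x$, and inside a block the overlap length decreases monotonically with $n$ while $\psi$ does too, so the only relevant test indices are the $n_k$ themselves. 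No quantitative recurrence, no generic fillers, no autocorrelation estimates are needed.

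The route to the final contradiction is also different. You invoke $D_{KL}(\lambda\|\PP)>0$ to guarantee that the interval $I_n$ is eventually long; the paper instead extracts from the assumed containment the inequality $-\alpha\geq(d/s-1)\log a_1$ and kills it with the strict convexity of the pressure at $d$, namely $P(s)\geq (s-d)P'(d)$ together with $P'(d)<\log a_1$ (which holds precisely because some $a_i\neq a_1$). Your KL argument and the paper's convexity argument are two sides of the same coin, but the latter pairs naturally with the extremal choice $a_1=\max_i a_i$ for the filler, whereas yours is calibrated to the $\lambda$-average $\chi$.

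In short: your plan is correct and your bookkeeping is right, but by choosing $\x$ periodic the paper sidesteps every difficulty you flag in your last paragraph and reduces the whole proof to a one-page computation.
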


{ Observe that such a function $\psi$ exists, for example take $\psi(n)=e^{-\alpha n}$. }

\begin{proof}
	Without loss of generality, we may assume that $\diam(X)=1$, $a_1=\max_{i\in\Lambda}a_i$. Now, let $\x:=(2,2,2,\ldots)$ and $x:=\pi(\x)$. {In order to reach a contradiction, we suppose that there does exist} $\overline{\Psi}\colon\Lambda^*\mapsto\R^+$ of the form $\overline{\Psi}(\ii)=\mathrm{diam}(X_\ii)\overline{\psi}(|\ii|)$, where $\overline{\psi}\colon\N\mapsto\R_+$ is such that $\alpha(\overline{\psi})=\liminf_{n\to\infty}\frac{-\log\overline{\psi}(n)}{n}\geq0$ and $\sum_{\ii\in\Lambda^*}\overline{\Psi}(\ii)^d=\infty$ and $W(x,(\overline{\Psi})^{d/s})\subseteq W(x,\Psi)$.
	
	Since $\infty=\sum_{\ii\in\Lambda^*}\overline{\Psi}(\ii)^d=\sum_{n=0}^\infty\overline{\psi}(n)^d$ and $\alpha(\overline{\psi})\leq0$ we get $\alpha(\overline{\psi})=0$. Let $n_k$ be a sequence along which $\lim_{k\to\infty}\frac{-\log\overline{\psi}(n_{k})}{n_{k}}=0$. Let
	\begin{equation}\label{eq:elldef}
	\ell_k=\left\lceil\left(\frac{d}{s}-1\right)\left(\left(n_k-\sum_{i=1}^{k-1}\ell_i\right)\frac{\log a_1}{\log a_2}+\sum_{i=1}^{k-1}\ell_i\right)+\frac{d\log\overline{\psi}(n_{k})}{s\log a_2}\right\rceil.
	\end{equation}
	By taking a sufficiently fast growing subsequence of $n_k$ and replacing $n_k$ with it, we can assume without loss of generality that $n_k+\ell_k<n_{k+1}$ and $\frac{\sum_{i=1}^{k-1}\ell_i}{n_k}\to0$ as $k\to\infty$.
	
	Now, let us define $\ii\in\Sigma$ such that
	$$
	i_j=\begin{cases}
	2 & \text{ if }n_k\leq j\leq n_{k}+\ell_k\\
	1 & \text{ if }n_k+\ell_k<j< n_{k+1}
	\end{cases}\text{ for some $k\in\N$.}
	$$
	It is easy to see that $\pi(\ii)\in W(\pi(\x),(\overline{\Psi})^{d/s})$. Indeed, by \eqref{eq:elldef}
	$$
	\|\pi(\sigma^{n_k}\ii)-\pi(\x)\|\leq a_2^{\ell_k}\leq \left(a_{\ii|_{n_k}}\right)^{d/s-1}\overline{\psi}(n_k)^{d/s},
	$$
	and so $\|\pi(\ii)-f_{\ii|_{n_k}}(\pi(\x))\|\leq\left(\overline{\Psi}(\ii)\right)^{d/s}$. We will show that $\pi(\ii)\notin W(\pi(\x),\Psi)$. 
	
	By the disjointness of the cylinders, $\pi(\ii)\in W(\pi(\x),\Psi)$ if and only if 
	$$
	\|\pi(\sigma^{n_k}\ii)-\pi(\x)\|\leq\psi(n_k)\text{ for infinitely many $k {\in \N}$.}
	$$
	However, by the strong separation condition $\|\pi(\sigma^{n_k}\ii)-\pi(\x)\|\geq\delta a_2^{\ell_k-1}$, where $\delta=\min_{i\neq j}d(f_i(X),f_j(X))$. So $\psi(n_k)\geq\delta a_2^{\ell_k-1}$ for infinitely many $k{\in \N}$. Taking logarithms, dividing by $n_k$, and letting $k\to\infty$, we get
	$$
	-\alpha\geq \left(\frac{d}{s}-1\right)\log a_1.
	$$
	However, by the convexity of the pressure {\eqref{self-similar pressure}} we have
	$$
	P(s)\geq P(d)+(s-d)P'(d),
	$$
	where $P'(d)=\sum_{i\in\Lambda}a_i^d\log a_i$. Since there exists $a_i\neq a_1$ we have that $P'(d)<\log a_1$. {This implies}
	$$
	(d-s)(-\log a_1)\geq\alpha s=P(s)\geq (s-d)P'(d)>(d-s)(-\log a_1),
	$$
	which is a contradiction. {Hence,} $\pi(\ii)\notin W(\pi(\x),\Psi)$.
\end{proof}

Note that the Hausdorff measure of the sets $W(x,\Psi)$ in both Proposition~\ref{prop:nogood1} and Proposition~\ref{prop:nogood2} can be calculated by using Theorem~\ref{thm:main}.

\section{Proof of the Main Result (Theorem \ref{thm:main})} \label{start of proof of main theorem}

We first note that the convergence part of Theorem \ref{thm:main} is contained in greater generality in Theorem \ref{thm:baker} $(i)$. When $\sum_{\ii \in \Lambda^*}{\Psi(\ii)^s}<\infty$, the proof that $\cH^s(W(x,\Psi))=0$ follows from a standard covering argument combined with the definition of Hausdorff measure. For further details see the argument given in \cite[\S 3.2]{baker}. Thus, it remains to prove the divergence part of Theorem~\ref{thm:main}.

An observation that is central to proving the convergence part of the result, and which we will also make use of for the divergence case, is that the set $W(x,\Psi)$ is the $\limsup$ set of the family of balls $\{B(f_\ii(x),\Psi(\ii))\}_{\ii\in\Lambda^*}$. That is,
\[
W(x,\Psi)=\bigcap_{n=0}^\infty\bigcup_{\substack{ \ii\in\Lambda^*\\|\ii|\geq n}}B(f_\ii(x),\Psi(\ii)).\]

In proving Theorem~\ref{thm:main}, we first show that by using Theorems \ref{thm:hillvelani} and \ref{thm:baker} the problem can be reduced to the case when $P(s)=s\alpha$ where $\alpha:=\alpha(\psi)$ is the shrinking rate of $\psi$ and $P$ is the pressure function defined in \eqref{eq:defP}. To tackle the proof in the remaining case, we use the following proposition.

\begin{prop}\label{prop:main}
Let $\Phi$ be a conformal iterated function system which satisfies the open set condition and has attractor $X$. Let $x\in X$ and let $\psi\colon\N\mapsto\R^+$ be a monotonic decreasing function with shrinking rate $0<\alpha(\psi)<\infty$. Let $\Psi(\ii) = \diam(X_\ii)\psi(|\ii|)$ and let $s$ be the unique solution of the equation $P(s)=s\alpha(\psi)$. For the set $W(x,\Psi)$, if
\[\sum_{n=1}^\infty\psi(n)^s\sum_{\ii\in\Lambda^n}\|f_{\ii}'(x)\|^s=\infty,\]
then
\[\mathcal{H}^{s}(W(x,\Psi))=\infty.\]
\end{prop}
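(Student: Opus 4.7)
The proof plan, following the outline in Section~\ref{outline}, is to construct a generalised Cantor set $K\subseteq W(x,\Psi)$ together with a probability measure $\mu$ supported on $K$ satisfying a uniform local bound $\mu(B(y,r))\le Cr^s$ for all sufficiently small balls. A standard mass distribution principle then yields $\mathcal{H}^s(K)\geq C^{-1}\mu(K)>0$, and the same construction carried out inside each cylinder of a fixed level $N$, summed over $N$, promotes this to $\mathcal{H}^s(W(x,\Psi))=\infty$.

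I first pass to the symbolic space $\Sigma=\Lambda^\N$ via the natural projection $\pi$. Combining \eqref{eq:measP}, the bounded distortion property \eqref{eq:boundeddist}, and the diameter comparison in \eqref{diameter comparisons} gives the key estimate
\[
\PP([\ii])\asymp e^{-\alpha s|\ii|}\|f_{\ii}'(x)\|^s\asymp\Psi(\ii)^s(e^{-\alpha|\ii|}/\psi(|\ii|))^s,
\]
and the hypothesis of the proposition is equivalent, up to multiplicative constants, to $\sum_{\ii\in\Lambda^*}\Psi(\ii)^s=\infty$. I then inductively choose levels $L_1<L_2<\cdots$ and finite families $\mathcal{F}_k\subseteq\Lambda^{L_k}$ so that (a) every word in $\mathcal{F}_{k+1}$ extends a unique $\ii'\in\mathcal{F}_k$; (b) for every child $\ii$ of $\ii'\in\mathcal{F}_k$ the cylinder $X_\ii$ lies inside $B(f_{\ii'}(x),\Psi(\ii'))$, so that $\pi(\widehat K)\subseteq W(x,\Psi)$ for $\widehat K:=\bigcap_k\bigcup_{\ii\in\mathcal{F}_k}[\ii]$; and (c) enough children are retained at each step that cumulative $\PP$-mass is preserved up to a uniform constant. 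Concretely, the children of $\ii'$ are words $\ii'\jj$ with $|\jj|$ the smallest integer such that $\diam(X_{\ii'\jj})\le c\Psi(\ii')$; the OSC gives separation and the divergence of the series lets us keep enough of them. The measure $\mu=\pi_*\nu$ is obtained by distributing mass on $\widehat K$ proportionally to the $\PP$-mass restricted to the selected tree.

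The step I expect to be the main obstacle is the verification of $\mu(B(y,r))\le Cr^s$, which is precisely the content of Proposition~\ref{prop:massdist}. For a ball $B(y,r)$ with $y\in K$, I would locate the unique level $k$ at which the cylinder scale $\diam(X_{\ii})$ for $\ii\in\mathcal{F}_k$ sits just above $r$, and split into two regimes according to whether $r$ is larger or smaller than the target scale $\Psi(\ii)$. In the first regime the bound is inherited from the $\PP$-mass estimate above; in the second, one counts the level-$(k+1)$ cylinders meeting $B(y,r)$ by comparing diameters through \eqref{f' composition bounds} and the bounded distortion property, and the factorisation $\Psi(\ii)^s\asymp\diam(X_{\ii})^s\psi(|\ii|)^s$ provides exactly the correct power of $r$. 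The multiplicity control in step (c) absorbs the residual factors.

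Once $\mathcal{H}^s(K)\ge c>0$ is in hand, I would upgrade to $\mathcal{H}^s(W(x,\Psi))=\infty$ by localising the construction inside each cylinder $X_\jj$ with $\jj\in\Lambda^N$. The hypothesis $\sum_\ii\Psi(\ii)^s=\infty$ persists after restricting to extensions of $\jj$, with an extra factor of $\|f_\jj'(x)\|^s$ coming from bounded distortion, so the localised construction yields $\mathcal{H}^s(W(x,\Psi)\cap X_\jj)\gtrsim\|f_\jj'(x)\|^s$. Since $\alpha>0$ forces $P(s)=s\alpha>0=P(d)$ and hence $s<d$, we have $\sum_{|\jj|=N}\|f_\jj'(x)\|^s\asymp e^{s\alpha N}\to\infty$; summing the lower bounds over the essentially disjoint cylinders of level $N$ and sending $N\to\infty$ yields $\mathcal{H}^s(W(x,\Psi))=\infty$, as required.
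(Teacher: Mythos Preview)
Your overall strategy differs from the paper's in an important structural way. You aim for a fixed Frostman bound $\mu(B(y,r))\le Cr^s$ yielding $\mathcal{H}^s(K)>0$, and then bootstrap to infinity by localising in level-$N$ cylinders and summing. The paper instead builds a \emph{single} measure $\eta$ satisfying the stronger property in Proposition~\ref{prop:massdist}: for \emph{every} $\delta>0$ there is $K$ with $\eta([\ii])\ll\delta\,\diam(X_\ii)^s$ once $|\ii|\ge K$. This immediately gives $\mathcal{H}^s(\pi\widehat W(\x,\rho))\gg 1/\delta$ for all $\delta$, hence $=\infty$, with no localisation step. You have misread Proposition~\ref{prop:massdist} as a standard mass-distribution bound; the $\delta$-quantifier is the whole point. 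To achieve it, the paper does not use a single Cantor tree as you describe: $\eta=\int_\Xi\mu_\A\,d\nu(\A)$ is an \emph{average} over Cantor sets $\Gamma_\A$ indexed by random hit-time sequences $\A=(A_k)$, where the $A_k$ range over carefully constructed windows $[n_k,m_k]$ (Lemma~\ref{lem:existseq}) and $\nu$ is a product measure with weights $\varepsilon(A_k)=e^{\alpha sA_k}\|f'_{\x|_1^{\rho(A_k)}}\|^s$. This averaging is what smears the mass enough to beat every $\delta$.

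Your bootstrap argument, while plausible in outline, has a gap you should be aware of. For the conclusion $\mathcal{H}^s(W(x,\Psi))\gtrsim\sum_{|\jj|=N}\|f_\jj'(x)\|^s$ to hold with an implied constant independent of $N$, you need the Frostman constant produced in step~1 to be uniform over the whole family of shifted functions $\widetilde\psi(n)=\psi(N+n)$. These all have the same shrinking rate $\alpha$, and their associated series still diverge, but your description of the construction (choosing $L_1<L_2<\cdots$ adapted to the particular $\psi$) does not make this uniformity evident; without it, the lower bounds $c(N)\|f_\jj'\|^s$ might decay faster than $e^{s\alpha N}$ grows. You would also need to place the Cantor sets $K_\jj$ inside the disjoint open pieces $f_\jj(U)$ coming from the OSC rather than inside the $X_\jj$, since the latter may overlap on sets of positive $\mathcal{H}^s$-measure when $s<d$. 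Both issues are fixable in principle, but the paper's $\delta$-trick sidesteps them entirely and is worth understanding: the rapid-growth conditions \eqref{it:3}--\eqref{it:4} in Lemma~\ref{lem:existseq} are engineered precisely so that the ratio $\eta([\ii])/\diam(X_\ii)^s$ tends to zero along the support of $\eta$.
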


In the remainder of this section, we will give the proof of Theorem \ref{thm:main} assuming Proposition \ref{prop:main}. The rest of the paper will then be devoted to establishing Proposition \ref{prop:main} and a number of other required technical lemmas. The key to establishing Proposition \ref{prop:main} is proving the existence of a suitable mass distribution as outlined in Proposition \ref{prop:massdist}.

\begin{proof}[Proof of Theorem~\ref{thm:main} (Divergence)]
	Recall that we are given
\begin{align} \label{divergence sum}
\sum_{\ii\in\Lambda^*}\Psi(\ii)^s=\infty.
\end{align}
It follows from \eqref{diameter comparisons} together with the bounded distortion property \eqref{eq:boundeddist} that, for every $x\in X$, we have
\begin{align} \label{extra divergence}
\sum_{n=1}^\infty\psi(n)^s\sum_{\ii\in\Lambda^n}\|f_{\ii}'(x)\|^s=\infty.
\end{align}

Next, let $\alpha$ denote the shrinking rate of $\psi$. First of all, let us consider the case when $\alpha=\infty$. It follows from \eqref{f' composition bounds} that
$$
\infty=\sum_{n=1}^\infty\psi(n)^s\sum_{\ii\in\Lambda^n}\|f_{\ii}'(x)\|^s\leq\sum_{n=1}^\infty\psi(n)^s (\sharp\Lambda a_{\max}^s)^n,
$$
which is possible if and only if $s=0$. Otherwise, it follows from the definition of $\alpha$ that the terms in the sum on the far right-hand side become too small, thus forcing the sum to converge. However, it can be seen that $W(x,\Psi)$ has continuum many elements. Indeed, it is a countable intersection of open and dense sets and, hence, it is a dense $G_\delta$ set by Baire's category theorem, see \cite[Theorem~6.54]{HewStr}. Thus, $\mathcal{H}^0(W(x,\Psi)\cap B)=\infty$ for every open ball $B$ in $X$ and so we may assume that $\alpha<\infty$.
	
Next, observe that, by the root test, it follows from \eqref{extra divergence} that
\begin{align} \label{root test}
1\leq\limsup_{n\to\infty}\sqrt[n]{\psi(n)^s\sum_{\ii\in\Lambda^n}\|f_{\ii}'(x)\|^s}=e^{-\alpha s}e^{P(s)}.
\end{align}

	If $\alpha=0$ then $P(s) \geq 0$. Furthermore, in this case, $\dimh(W(x,\Psi))=d:=\dimh X$ by Theorem \ref{thm:hillvelani} and \eqref{eq:finitemeas}, which yields that $\dimh X = d$ where $d$ is the unique solution of the equation $P(d)=0$. Since $P(s)$ is strictly monotonically decreasing, if $P(s)>0$ then $s<d$ and, hence, $\mathcal{H}^s(W(x,\Psi))=\infty$. If $P(s)=0$ then $s=d=\dimh X$ and the statement follows from Theorem~\ref{thm:baker}$(ii)$ since $W(x,\Psi)$ has full $\cH^d|_X$-measure. Thus, we may assume that $0<\alpha<\infty$.
	
	Note that it follows from \eqref{root test} that $P(s) \geq s\alpha$. { Since $s\mapsto P(s)-s\alpha$ is strictly monotonically decreasing, } if $P(s)>s\alpha$ then by Theorem~\ref{thm:hillvelani}, $\dimh(W(x,\Psi))>s$ and thus the statement follows again. So, for the remainder of the proof, suppose that $P(s)=s\alpha$.
	
	Now, let $B$ be an open ball such that $X\cap B\neq\emptyset$. Since the maps of $\Phi$ are uniformly contracting {(this is essentially what \eqref{f' bounds} tells us)}, there exists $\ii\in\Lambda^*$ so that $f_\ii(X)\subseteq B$. Let $\widetilde{\Psi}(\jj)=\diam(X_\jj)\widetilde{\psi}(|\jj|)$, where $\widetilde{\psi}(n)=C^{-1}\psi(n+|\ii|)$ and $C>1$ is the constant in \eqref{eq:boundeddist}. From \eqref{extra divergence} and the bounds on $f'$ given in \eqref{f' composition bounds}, we have
	\[
	\begin{split}
	\sum_{n=0}^\infty\widetilde{\psi}(n)^s\sum_{\jj\in\Lambda^n}\|f_{\jj}'(x)\|^s&=C^{-s}\sum_{n=0}^\infty\psi(n+|\ii|)^s\sum_{\jj\in\Lambda^n}\|f_{\jj}'(x)\|^s\\
	&\geq C^{-s}(\sharp\Lambda)^{-|\ii|}a_{\max}^{-s|\ii|}\sum_{n=0}^\infty\psi(n+|\ii|)^s\sum_{\jj\in\Lambda^{n+|\ii|}}\|f_{\jj}'(x)\|^s=\infty.
	\end{split}
	\]
	
Thus, by Proposition~\ref{prop:main}, we have $\mathcal{H}^s(W(x,\widetilde{\Psi}))=\infty$.

Next, suppose that $\|y-f_\jj(x)\|<\widetilde{\Psi}(\jj)$. Employing the mean value theorem \eqref{MVT} and \eqref{diameter comparisons}, we see that
\begin{align*}	
\|f_{\ii}(y)-f_{\ii\jj}(x)\| &\leq \|f_{\ii}'(\xi)\|\|y-f_{\jj}(x)\| \\
                             &< \|f_{\ii}'(\xi)\|\widetilde{\Psi}(|\jj|) \\
                             &=C^{-1}\|f_{\ii}'(\xi)\|\diam(X_\jj)\psi(|\ii\jj|) \\
                             &\leq \diam(f_\ii(X_\jj))\psi(|\ii\jj|)\\
                             &= \diam(X_{\ii\jj})\psi(|\ii\jj|).
\end{align*}
	Hence, if $y\in W(x,\widetilde{\Psi})$ then $f_{\ii}(y)\in W(x,\Psi)$ and thus
	$$
	\mathcal{H}^s(W(x,\Psi)\cap B)\geq\mathcal{H}^s(f_{\ii}(W(x,\widetilde{\Psi}))\cap B)=\mathcal{H}^s(f_{\ii}(W(x,\widetilde{\Psi}))).
	$$
The last inequality above follows since $\ii \in \Lambda^*$ was chosen so that $f_{\ii}(X) \subset B$ and so, consequently, we also have $f_{\ii}(W(x,\widetilde{\Psi}) \subset B$.

Finally, it follows from \eqref{f' composition bounds} and \eqref{diameter comparisons} combined with the definition of Hausdorff \mbox{$s$-measure} that
\[\mathcal{H}^s(f_{\ii}(W(x,\widetilde{\Psi}))) \geq a_{\min}^{|\ii|s}\mathcal{H}^s(W(x,\widetilde{\Psi})).\]
This completes the proof.	
\end{proof}

\section{Proof of Proposition \ref{prop:main}: A Symbolic Approach} \label{symbolic section}

\begin{comment}
For $x\in X$ and $\psi: \N \to \R^+$, let
$$
\widetilde{W}(x,\psi):=\{\ii\in\Sigma:\|\pi(\sigma^n\ii)-x\|< C^{-2}\mathrm{diam}(X)\psi(n)\text{ for infinitely many }n \in \N\}.
$$
Here, $C>0$ is the constant in \eqref{eq:boundeddist}. The next lemma shows that when $\Psi(\ii) = \diam(X_{\ii})\psi(|\ii|)$, the natural projection of $\widetilde{W}(x,\psi)$ is contained in the set $W(x,\Psi)$.

\begin{lemma}\label{lem:incl1}
	Let $x\in X$ and let $\psi\colon\N\mapsto\R^+$ be a monotonically decreasing function. Let $\Psi(\ii)=\mathrm{diam}(X_\ii)\psi(|\ii|)$ and let $W(x,\Psi)$ be as in \eqref{eq:shrinking1}. Then
\[\pi\widetilde{W}(x,\psi)\subseteq W(x,\Psi).\]
\end{lemma}

\begin{proof}
	Let $\ii\in\widetilde{W}(x,\psi)$. Then, using \eqref{MVT}, \eqref{diameter comparisons}, and \eqref{pi composition equality}, for infinitely many $n \in \N$ we have
\begin{align*}
\Psi(\ii|_1^{n})&=\mathrm{diam}(X_{\ii|_1^{n}})\psi(n) \\
                  &\geq C^{-1}\|f_{\ii|_1^{n}}'(\xi)\|\diam(X)\psi(n)\\
	              &\geq C\|f_{\ii|_1^{n}}'(\xi)\|\|\pi(\sigma^n\ii)-x\| \\
	              &\geq\|\pi(\ii)-f_{\ii|_1^{n}}(x)\|.
\end{align*}
	Thus, $\pi(\ii)\in W(x,\Psi)$.
\end{proof}
\end{comment}

For the rest of the paper, we fix an $x\in X$ and a symbolic representation $\x \in \Sigma$ for which $\pi(\x)=x$. Next, let us define $\rho\colon\N\mapsto\N$ as follows; let $\rho(n)$ be the unique natural number such that
\begin{equation}\label{eq:defrho}
\diam(X_{\x|_1^{\rho(n)}})\leq C^{-2}\diam(X)\psi(n)<\diam(X_{\x|_1^{\rho(n)-1}}).
\end{equation}
Note that $\rho$ is monotonically increasing. By combining \eqref{eq:defrho} with \eqref{diameter comparisons} and the bounded distortion property \eqref{eq:boundeddist}, it can be seen that if $\alpha$ is the shrinking rate of $\psi$ defined earlier, then
\begin{align} \label{alpha rho}
	\alpha=\liminf_{n\to\infty}\dfrac{-\log\psi(n)}{n}=\liminf_{n\to\infty}\frac{-1}{n}\log\diam(X_{\x|_1^{\rho(n)}})=\liminf_{n\to\infty}\frac{-1}{n}\log\|f_{\x|_1^{\rho(n)}}'(x)\|.
\end{align}

For a monotonic increasing function $\rho\colon\N\mapsto\N$ let
\begin{equation}\label{eq:shrinking3}
\widehat{W}(\x,\rho)=\{\ii\in\Sigma:\sigma^n\ii\in[\x|_1^{\rho(n)}]\text{ for infinitely many }n\in\N\}.
\end{equation}

\begin{lemma}\label{lem:incl}
	Let $\x\in\Sigma$, let $\psi\colon\N\mapsto\R^+$ be a monotonic decreasing function, and let $\rho$ be as defined in \eqref{eq:defrho}. Then,
\[\pi\widehat{W}(\x,\rho)\subseteq W(\pi(\x),\Psi).\]
\end{lemma}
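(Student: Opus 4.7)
The plan is to fix $\ii \in \widehat{W}(\x,\rho)$ and show that for every $n \in \N$ realizing the $\widehat{W}$-condition (i.e.\ $\sigma^n\ii \in [\x|_1^{\rho(n)}]$), the finite word $\jj := \ii|_1^n \in \Lambda^*$ witnesses the $W(\pi(\x),\Psi)$-condition at $\pi(\ii)$. Since there are infinitely many such $n$, this yields $\pi(\ii) \in W(\pi(\x),\Psi)$.

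First I would observe that for such an $n$, the point $\pi(\sigma^n\ii)$ and the point $\pi(\x)$ both lie in $X_{\x|_1^{\rho(n)}}$. Indeed, $\sigma^n\ii \in [\x|_1^{\rho(n)}]$ forces $\pi(\sigma^n\ii) \in X_{\x|_1^{\rho(n)}}$ directly, and $\pi(\x) = f_{\x|_1^{\rho(n)}}(\pi(\sigma^{\rho(n)}\x))$ lies in the same cylinder image. Hence, by the defining inequality \eqref{eq:defrho} of $\rho$,
\[
\|\pi(\sigma^n\ii) - \pi(\x)\| \leq \diam(X_{\x|_1^{\rho(n)}}) \leq C^{-2}\diam(X)\psi(n).
\]

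Next I would push this estimate forward under $f_{\ii|_1^n}$. Using the identity \eqref{pi composition equality} we have $\pi(\ii) = f_{\ii|_1^n}(\pi(\sigma^n\ii))$, so the mean value theorem \eqref{MVT} gives some $\xi \in D$ with
\[
\|\pi(\ii) - f_{\ii|_1^n}(\pi(\x))\| \leq \|f_{\ii|_1^n}'(\xi)\| \cdot \|\pi(\sigma^n\ii) - \pi(\x)\|.
\]
The diameter comparison inequality \eqref{diameter comparisons} (applied with $Y = X$) yields $\|f_{\ii|_1^n}'(\xi)\| \leq C\,\diam(X_{\ii|_1^n})/\diam(X)$, and combining this with the previous bound gives
\[
\|\pi(\ii) - f_{\ii|_1^n}(\pi(\x))\| \leq C^{-1}\diam(X_{\ii|_1^n})\psi(n) < \diam(X_{\ii|_1^n})\psi(n) = \Psi(\ii|_1^n).
\]

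Since this holds for infinitely many $n$, and distinct $n$ give distinct words $\ii|_1^n \in \Lambda^*$, this furnishes infinitely many $\jj \in \Lambda^*$ with $\|\pi(\ii) - f_{\jj}(\pi(\x))\| < \Psi(\jj)$, so $\pi(\ii) \in W(\pi(\x),\Psi)$. There is no real obstacle here: the statement is essentially a bookkeeping lemma packaging the choice of $\rho$ in \eqref{eq:defrho} against the distortion bounds \eqref{MVT}--\eqref{diameter comparisons}. The mild subtlety is making sure the two constants of $C$ absorbed in the definition of $\rho$ leave room to conclude a strict inequality $<\Psi(\jj)$ rather than $\leq$, which is precisely why the factor $C^{-2}$ appears in \eqref{eq:defrho}.
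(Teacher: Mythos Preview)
Your proof is correct and follows essentially the same route as the paper's: bound $\|\pi(\sigma^n\ii)-\pi(\x)\|$ by $\diam(X_{\x|_1^{\rho(n)}})\leq C^{-2}\diam(X)\psi(n)$ using the definition of $\rho$, then push this forward via \eqref{MVT} and \eqref{diameter comparisons} to obtain $\|\pi(\ii)-f_{\ii|_1^n}(\pi(\x))\|\leq \Psi(\ii|_1^n)$. Your remark about the role of the $C^{-2}$ factor in securing strict inequality is a nice touch (note that strictly this requires $C>1$, but one can always enlarge $C$).
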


\begin{proof}
	If $\ii\in\widehat{W}(\x,\rho)$, then $\sigma^n\ii\in[\x|_1^{\rho(n)}]$ for infinitely many $n\in\N$. For each such $n \in \N$, we have $\pi(\sigma^n\ii)\in\pi([\x|_1^{\rho(n)}])=X_{\x|_1^{\rho(n)}}$. Hence, by the definition of $\rho$, for infinitely many $n\in\N$ we have
\[\|\pi(\sigma^n\ii)-\pi(\x)\|\leq\diam(X_{\x|_1^{\rho(n)}})\leq C^{-2}\psi(n)\mathrm{diam}(X).\]

Then, using \eqref{MVT}, \eqref{diameter comparisons}, \eqref{pi composition equality} and the fact that $\pi(\x)=x$, for infinitely many $n \in \N$ we have
\begin{align*}
\|\pi(\ii)-f_{\ii|_1^{n}}(x)\|&\leq C\|f_{\ii|_1^{n}}'(\xi)\|\|\pi(\sigma^n\ii)-x\| \\
&\leq C^{-1}\|f_{\ii|_1^{n}}'(\xi)\|\diam(X)\psi(n)\\
&\leq\mathrm{diam}(X_{\ii|_1^{n}})\psi(n) \\
&=\Psi(\ii|_1^{n}). 
\end{align*}
Thus, $\pi(\ii)\in W(x,\Psi)$.
\end{proof}

Our goal now is to prove the following proposition, which implies Proposition~\ref{prop:main}.

	For simplicity, throughout the rest of the paper, we use the Vinogradov notation and write $A\ll B$ to denote that $A\leq dB$ for some constant $d > 0$. When we refer to explicit constants $C$, these may not always be the same constant but will typically be related to the bounded distortion property \eqref{eq:boundeddist} or the constant arising in \eqref{eq:measP}.

\begin{prop}\label{prop:massdist}
Let $\x\in\Sigma$ and let $\rho\colon\N\mapsto\N$ be a monotonic increasing function with
\[\liminf_{n\to\infty}\frac{-1}{n}\log\|f_{\x|_1^{\rho(n)}}'(\pi(\x))\|=:\alpha\in(0,\infty).\]
Let $s$ be the unique solution of the equation $P(s)=s\alpha$. Suppose that
\[\sum_{n=0}^\infty e^{\alpha s n}\|f_{\x|_1^{\rho(n)}}'(\pi(\x))\|^{s}=\infty.\]
Then, there exists a probability measure $\eta$ such that $\eta(\widehat{W}(\x,\rho))=1$ and, for every $\delta>0$, there exists a $K\geq1$ such that for every $\ii\in\Lambda^*$ with $|\ii|\geq K$,
$$\eta([\ii])\ll\delta\cdot (\diam(X_\ii))^s,$$
where the implicit constant is independent of $\ii$ and $\delta$.
\end{prop}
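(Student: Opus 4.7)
My approach would be a Cantor-tree mass-distribution construction in the symbolic space, modelled on the proof of the Mass Transference Principle but with the weights tuned to $\|f'_\ii(\pi(\x))\|^s$. Two structural facts make the matching to Hausdorff $s$-measure clean: by bounded distortion~\eqref{eq:boundeddist} one has $\|f'_\ii(\pi(\x))\|^s\asymp(\diam X_\ii)^s$, and by the critical pressure identity $P(s)=\alpha s$ one has $\sum_{|\mathbf{m}|=N}\|f'_\mathbf{m}(\pi(\x))\|^s\asymp e^{N\alpha s}$. Setting $b_n:=e^{\alpha s n}\|f'_{\x|_1^{\rho(n)}}(\pi(\x))\|^s$, the hypothesis becomes $\sum_n b_n=\infty$. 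Note that $\alpha>0$, combined with monotonicity of $\rho$, forces $\rho(n)\to\infty$.

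\emph{Cantor construction.} Prescribe a sequence $M_k\uparrow\infty$ with $\prod_k M_k^{-1}=0$, and pick recursively block endpoints $0=L_0<L_1<L_2<\cdots$ satisfying $L_k-L_{k-1}\ge\rho(L_k)$ and
$$
\sum_{L_{k-1}<n\le L_k-\rho(L_k)} b_n\;\ge\;M_k\,e^{\alpha s L_{k-1}};
$$
this is possible because tails of $\sum_n b_n$ remain divergent. Set $\mathcal{J}_0=\{\emptyset\}$; for each $\jj'\in\mathcal{J}_{k-1}$ of length $L_{k-1}$, consider all extensions
$$
\jj=\jj'\,\mathbf{m}\,\x|_1^{\rho(n)},\qquad L_{k-1}<n\le L_k-\rho(L_k),\ |\jj'\mathbf{m}|=n,
$$
and prune greedily (preferring shorter $n$) to a pairwise-disjoint family $\mathcal{J}_k(\jj')$; put $\mathcal{J}_k=\bigcup_{\jj'}\mathcal{J}_k(\jj')$. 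By construction each surviving $[\jj]\subseteq\sigma^{-n}[\x|_1^{\rho(n)}]$, so $K_\infty:=\bigcap_k\bigsqcup_{\mathcal{J}_k}[\jj]\subseteq\widehat{W}(\x,\rho)$.

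\emph{Measure and decay bound.} Distribute mass recursively by
$$
\eta([\jj])=\eta([\jj'])\cdot\frac{\|f'_\mathbf{m}(\pi(\x))\|^s\|f'_{\x|_1^{\rho(n)}}(\pi(\x))\|^s}{Z_k(\jj')},\qquad Z_k(\jj'):=\sum_{\mathcal{J}_k(\jj')}\|f'_\mathbf{m}\|^s\|f'_{\x|_1^{\rho(n)}}\|^s.
$$
The pressure estimate together with the lower bound on the weighted block sum gives $Z_k(\jj')\asymp e^{-\alpha s L_{k-1}}\sum_n b_n\gtrsim M_k$ uniformly in $\jj'$, and Kolmogorov consistency promotes $\eta$ to a Borel probability measure with $\eta(\widehat{W}(\x,\rho))=1$. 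Telescoping the recursion yields
$$
\eta([\jj])\asymp\|f'_\jj(\pi(\x))\|^s\prod_{j=1}^k Z_j^{-1}\;\ll\;(\diam X_\jj)^s\prod_{j=1}^k M_j^{-1}\qquad(\jj\in\mathcal{J}_k).
$$
Given $\delta>0$, choose the $M_k$ at the first step so that $\prod_{j\le k}M_j^{-1}\le\delta$ once $k\ge k_0$, and take $K:=L_{k_0}$. For arbitrary $[\ii]$ with $|\ii|\ge K$, I split into three cases — $[\ii]$ contains some level-$k$ children, $[\ii]$ is contained in a level-$k$ cylinder, or $[\ii]$ straddles an imposed block $\x|_1^{\rho(n)}$ — and in each the bound $\eta([\ii])\ll\delta(\diam X_\ii)^s$ follows by either summing child masses (factoring out using the pressure identity) or inheriting the bound from the parent.

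\emph{Main obstacle.} The delicate point is the antichain pruning in the Cantor construction: one must verify that discarding the short cylinders which are prefixes of longer ones (an overlap of relative weight $\lesssim(\#\Lambda)^{-\rho(n)}$ at each level $n$) does not erode $Z_k(\jj')$ below the threshold $\gtrsim M_k$. Here the positivity $\alpha>0$, which forces $\rho(n)\to\infty$, is essential, since it makes $\sum_n(\#\Lambda)^{-\rho(n)}$ summable and the cumulative pruning loss only a bounded multiplicative constant. The straddling case in the decay bound — where $[\ii]$ sits across an imposed block — is the other technical step requiring care, and is where the precise interplay between $|\ii|$, $L_k$, $n$ and $\rho(n)$ must be unpacked to ensure the factorisation still produces the required $(\diam X_\ii)^s$ factor.
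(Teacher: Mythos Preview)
Your overall architecture — a Cantor tree in the symbolic space with mass distributed proportionally to $\|f'_\jj\|^s$, and blocks $[L_{k-1},L_k]$ chosen so that $\sum b_n$ over the block beats $M_k e^{\alpha s L_{k-1}}$ — is exactly the right shape, and matches the paper's strategy. The paper also builds $\eta$ as a mixture over ``hit times'' $A_k\in[n_k,m_k]$ (your $n$ ranging over the block), weighted by $\varepsilon(A_k)=b_{A_k}$, with the gaps filled by the Gibbs measure $\PP$; the telescoping estimate for $\eta([\jj])$ and the three-case analysis for general cylinders are essentially what you describe.

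The genuine gap is the antichain pruning. Your claimed ``overlap of relative weight $\lesssim(\#\Lambda)^{-\rho(n)}$'' is not correct: for a candidate $\jj=\mathbf{p}\,\mathbf{m}\,\x|_1^{\rho(n)}$ to be contained in a shorter candidate at level $n'<n$, one needs $\sigma^{n'-|\mathbf{p}|}(\mathbf{m}\,\x|_1^{\rho(n)})\in[\x|_1^{\rho(n')}]$. When $n'+\rho(n')>n$, this constraint overlaps with the \emph{imposed} block $\x|_1^{\rho(n)}$ and becomes a periodicity condition on $\x$ itself, not a generic $\PP$-measure bound on $\mathbf{m}$. If $\x$ is (nearly) periodic with small period, a large fraction of candidates at each level $n$ can be swallowed by shorter ones, and greedy pruning may reduce $Z_k(\jj')$ by far more than a bounded constant. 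Your summability argument $\sum_n(\#\Lambda)^{-\rho(n)}<\infty$ does not see this.

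The paper confronts exactly this obstruction and resolves it \emph{before} building the tree, not by pruning after. It uses a combinatorial periodicity lemma (Lemma~\ref{lem:avoidcollision1}) to show that accidental hits near the imposed block force $\x$ to be $m(\x,\rho(A_k))$-periodic on a long interval; then it inserts two single ``blocking'' symbols $\omega_k,\tau_k$ flanking each imposed block $\x|_1^{\rho(A_k)}$, chosen to break this periodicity, and restricts the free connector $\mathbf{m}$ to an explicit set $\Omega_{p,q}\subset\Lambda^{q-p-2}$ that forbids the relevant prefix/suffix matches with $\x$. Lemma~\ref{lem:boundomega} shows $\PP(\Omega_{p,q})\ge 1-C(a_{\max}^s e^{-\alpha s})^{c\sqrt{p}}$, so the loss is indeed a bounded constant (Lemma~\ref{lem:boundprodomega}). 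This is the substantive combinatorial content you are missing: the control on the pruning loss is not automatic from $\rho(n)\to\infty$ but requires this periodicity analysis. Your straddling case is then handled much as you outline (see the case $B=R(k)+1,\dots,k-1$ in the proof), using that $n\mapsto n+\rho(n)$ is strictly increasing to get a geometric sum in $a_{\max}^s$.
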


For $r>0$, denote by $\Theta_r$ the sequences $\ii \in \Lambda^*$ for which the cylinders $f_\ii(X)$ have diameter approximately equal to $r$. More precisely,
$$
\Theta_r=\{\ii\in\Lambda^*:\diam(X_\ii)\leq r<\diam(X_{\ii|_1^{|\ii|-1}})\}.
$$
Note that the collection of cylinders $\{[\ii]:\ii\in\Theta_r\}$ partitions $\Sigma$.

\begin{proof}[Proof of Proposition~\ref{prop:main}]
	First let us show that the assumption on divergence implies the divergence of the series in Proposition~\ref{prop:massdist}. Take $\mathbb{P}$ to be the measure described in \eqref{eq:measP} and let $\rho: \N \to \N$ be as defined in \eqref{eq:defrho}. Then, from the definitions of $\PP$ and $\rho$ it follows by \eqref{eq:boundeddist} and \eqref{diameter comparisons} that
	\[
	\begin{split}
	\infty=\sum_{n=0}^\infty\psi(n)^s\sum_{\ii\in\Lambda^n}\|f_{\ii}'(x)\|^s&\leq C^{2s+1}\diam(X)^{-s}\sum_{n=0}^\infty\diam(X_{\x|_1^{\rho(n)-1}})^s\sum_{\ii\in\Lambda^n}e^{\alpha s n}\PP([\ii])\\
	&\leq C^{3s+1}\sum_{n=0}^\infty\|f_{\x|_1^{\rho(n)-1}}'(f_{x_{\rho(n)}}(x))\|^{s}e^{\alpha s n}\\
	&\leq C^{3s+1}\sum_{n=0}^\infty a_{\min}^{-s}\|f_{x_{\rho(n)}}'(x)\|^s\|f_{\x|_1^{\rho(n)-1}}'(f_{x_{\rho(n)}}(x))\|^{s}e^{\alpha s n}\\
	&= C^{3s+1}a_{\min}^{-s}\sum_{n=0}^\infty\|f_{\x|_1^{\rho(n)}}'(x)\|^{s}e^{\alpha s n}.
	\end{split}
	\]
	To obtain the penultimate line of the above we employ the bounded distortion property \eqref{eq:boundeddist} and to obtain the final inequality we use the chain rule.
	
	Applying Proposition~\ref{prop:massdist}, let $\eta$ be the probability measure described, let $\delta>0$ be arbitrary, and let $K$ be the corresponding index given in Proposition~\ref{prop:massdist}. Choose $R>0$ sufficiently small such that $\min\{|\ii|:\ii\in\Theta_r\}\geq K$ for every $0<r<R$.
	
	By \cite[Corollary~5.8 and Theorem~3.9]{KaenVil}, there exists a constant $C\geq 0$ such that for any bounded Borel subset $I$ of $\R^t$
	$$
	\sharp\{\ii\in\Theta_{\mathrm{diam}(I)}:f_\ii(X)\cap I\neq\emptyset\}\leq C.
	$$
	
	Let $I$ be a bounded Borel subset of $\R^t$ such that $\diam(I)<R$. Now, by Proposition~\ref{prop:massdist}, we have
	$$
	\pi_*\eta(I)\leq\pi_*\eta\left(\bigcup_{\substack{\ii\in\Theta_{\mathrm{diam}(I)}\\f_\ii(X)\cap I\neq\emptyset}}f_\ii(X)\right)\leq \sum_{\substack{\ii\in\Theta_{\mathrm{diam}(I)}\\f_\ii(X)\cap I\neq\emptyset}}\eta([\ii])\ll \sum_{\substack{\ii\in\Theta_{\mathrm{diam}(I)}\\f_\ii(X)\cap I\neq\emptyset}} \delta \diam(X_{\ii})^{s}\leq C\delta\diam(I)^{s}.
	$$
	Let $\{I_i\}_i$ be such that $\pi\widehat{W}(\x,\rho)\subseteq\bigcup_iI_i$ and $\diam(I_i)<R$, i.e. let $\{I_i\}_i$ be an $R$-cover for $\pi\widehat{W}(\x,\rho)$. Also recall that, by Proposition \ref{prop:massdist}, we have $\eta(\widehat{W}(\x,\rho))=1$. Hence, using the above inequality, we have
	$$
	\sum_i\diam(I_i)^{s}\gg\sum_i\frac{1}{\delta}\pi_*\eta(I_i)\geq\frac{1}{\delta}\pi_*\eta\left(\bigcup_iI_i\right)\geq\frac{1}{\delta}\pi_*\eta(\pi\widehat{W}(\x,\rho))=\frac{1}{\delta}.
	$$
	Therefore, by the definition of Hausdorff $s$-measure, $\mathcal{H}^{s}(\pi\widehat{W}(\x,\rho))\gg\frac{1}{\delta}$. Since $\delta>0$ was arbitrary, this implies that $\mathcal{H}^{s}(\pi\widehat{W}(\x,\rho))=\infty$. By Lemma~\ref{lem:incl}, $\mathcal{H}^{s}(\pi\widehat{W}(\x,\rho))\leq \mathcal{H}^{s}(W(x,\Psi))$, and thus the proof is complete.
\end{proof}

Before we turn to the proof of Proposition~\ref{prop:massdist}, we prove a technical lemma.
We say that $\ii=(i_1,i_2,\ldots)\in\Sigma$ is \emph{$m$-periodic} if $i_k=i_{k+m}$ for every $k\geq1$. We say that $\ii \in \Sigma$ is \emph{$m$-periodic on $(\ell,n)$}, where $n-\ell\geq m$, if $i_k=i_{k+m}$ for $\ell\leq k\leq n-m$.

\begin{lemma}\label{lem:avoidcollision1}
	Let $\x\in\Sigma$ and let $n\in\N$. Suppose that $$
	m(\x,n):=\min\{k:1\leq k\text{ and }\x|_1^{n-k}=\x|_{k+1}^n\}<n/2.
	$$
	Then, $\x$ is $m(\x,n)$-periodic on $(1,n)$. Moreover, for each $1\leq k\leq n-m(\x,n)$, we have $\x|_1^{n-k}=\x|_{k+1}^n$ if and only if there exists $p\in\N$ such that $k=p\cdot m(\x,n)$.
\end{lemma}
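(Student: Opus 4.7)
Write $m = m(\x, n)$. The first assertion is immediate from definitions: by the minimality defining $m$, we have $\x|_1^{n-m} = \x|_{m+1}^n$, i.e.\ $x_k = x_{k+m}$ for every $1 \le k \le n - m$, which is exactly the condition that $\x$ be $m$-periodic on $(1, n)$. Thus the content of the lemma lies in the ``moreover'' clause. For its ``if'' direction, suppose $k = pm$ with $k \le n - m$. I would induct on $p$: the base case $p = 1$ is the first assertion, and for the inductive step, given $x_i = x_{i + (p-1)m}$ on $1 \le i \le n - (p-1)m$, the $m$-periodicity applied at index $i + (p-1)m$, which lies in $[1, n - m]$ whenever $i \le n - pm$, yields $x_{i + (p-1)m} = x_{i + pm}$, and hence $x_i = x_{i + pm}$ for all $1 \le i \le n - pm$.

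The substantive direction is ``only if''. Let $1 \le k \le n - m$ with $\x|_1^{n-k} = \x|_{k+1}^n$. If $k < m$, this contradicts the minimality defining $m$; hence $k \ge m$, and both $m$ and $k$ are periods (in the classical word-combinatorics sense) of the finite word $x_1 x_2 \cdots x_n$, satisfying $m + k \le n$. The plan is to invoke the Fine--Wilf theorem: if $p, q$ are periods of a word of length $n$ with $p + q \le n + \gcd(p, q)$, then $\gcd(p, q)$ is also a period. Since $m + k \le n \le n + \gcd(m, k)$, the hypothesis holds, so $\gcd(m, k)$ is a positive period; minimality of $m$ then forces $\gcd(m, k) \ge m$, which combined with $\gcd(m, k) \le m$ gives $\gcd(m, k) = m$. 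Hence $m \mid k$, i.e.\ $k = pm$ for some $p \in \N$.

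If one prefers to avoid invoking Fine--Wilf as a black box, the argument can be run directly by proving the following Euclidean-style reduction: whenever $m < k$ are both periods with $m + k \le n$, then $k - m$ is also a period. The main technical obstacle is the index bookkeeping for this reduction. To establish $x_i = x_{i + (k-m)}$ for all $1 \le i \le n - (k - m)$, I would split into a ``front'' range $1 \le i \le n - k$ (where the $k$-periodicity followed by the $m$-periodicity does the job) and a ``back'' range $n - k + 1 \le i \le n - k + m$ (where one first applies the $m$-periodicity at $i - m$, valid since $k \le n - m$ forces $i - m \ge 1$, and then the $k$-periodicity at $i - m \le n - k$). These two ranges together cover $1 \le i \le n - (k - m)$ precisely when $m + k \le n$; iterating this reduction then terminates at $\gcd(m, k)$ in the manner of the Euclidean algorithm, recovering the Fine--Wilf conclusion. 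The hypothesis $m(\x, n) < n/2$ is what ensures the interesting case $m < k \le n - m$ is nonempty.
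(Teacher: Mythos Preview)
Your proof is correct and takes a genuinely different route from the paper's. The paper argues by first writing the word $\x|_1^n$ explicitly as a concatenation $\tv\ov\tv\ov\cdots\ov\tv$ with $\tv\in\Lambda^{n-qm}$ and $\ov\in\Lambda^{(q+1)m-n}$ where $q=\lfloor n/m\rfloor$; this block structure is then used both to read off the ``if'' direction and to carry out the reduction $k\mapsto k-m$ by explicit index-chasing through the $\tv\ov$ blocks. Your primary argument instead recognises the statement as an instance of the Fine--Wilf theorem and dispatches the ``only if'' direction in one line: both $m$ and $k$ are periods with $m+k\le n$, so $\gcd(m,k)$ is a period, and minimality of $m$ forces $m\mid k$. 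This is cleaner and situates the lemma within standard combinatorics on words, at the cost of invoking an external result. Your alternative self-contained argument---showing $k-m$ is a period by splitting indices into front and back ranges---is essentially the paper's reduction step stripped of the $\tv\ov$ notation, and is arguably easier to verify. One minor remark: the hypothesis $m<n/2$ is not actually needed for your Fine--Wilf argument to go through (the condition $k\le n-m$ already gives $m+k\le n$); it is used in the paper to ensure $q\ge 2$ so that the $\tv\ov$ decomposition is nontrivial.
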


%\left \lfloor\frac{n}{m(\x,n)}\right\rfloor
\begin{proof}
	For convenience, let us write $m=m(\x,n)$. By the definition of $m$, we have \mbox{$x_\ell=x_{m+\ell}$} for every $\ell=1,\ldots,n-m$ and, thus, the proof of the first part of the lemma is complete.
	
	Next, let $q :=\left\lfloor\frac{n}{m}\right\rfloor$. Note that $q \geq 2$ since $m  < \frac{n}{2}$. By using the $m$-periodicity of $\x$, we have that $x_\ell=x_{qm+\ell}$ for every \mbox{$\ell=1,\ldots,n-qm$}. In other words, $\x|_1^{n-qm}=\x|_{qm+1}^n$. Thus, again using the $m$-periodicity of $\x$, there are words $\tv\in\Lambda^{n-qm}$ and $\ov\in\Lambda^{(q+1)m-n}$ such that $\x=\tv\ov\tv\ldots\ov\tv$. Hence, for every $p=1,\ldots,q$, we have $\x|_1^{n-pm}=\x|_{pm+1}^n$. In particular, this yields that $\x|_1^{n-k}=\x|_{k+1}^n$ if $k=pm$ for some $p=1,\ldots,q$.
	
	For the other direction, we argue by contradiction. Let us suppose that there exists some $k$ such that $m\nmid k$ and $\x|_1^{n-k}=\x|_{k+1}^n$. By the definition of $m$, it follows that $m<k$. In order to obtain a contradiction, it is enough to show that
	$$
	\x|_{1}^{n-k+m}=\x|_{k+1-m}^{n}.$$
	Then, by induction, one can find $\ell\in\N$ such that for $k'=k-\ell m<m$, we have \mbox{$\x|_{k'+1}^n=\x|_1^{n-k'}$}, which is a contradiction.
	
	Since $\x|_{n-m+1}^n=\ov\tv$ and $\x|_1^{n-k}=\x|_{k+1}^{n}$, by using the $m$-periodicity of $\x$ we have $\ov\tv=\x|_{n-m+1}^n=\x|_{n-k-m+1}^{n-k}=\x|_{n-k+1}^{n-k+m}$. Similarly, $\tv\ov=\x|_1^m=\x|_{k+1}^{k+m}=\x|_{k+1-m}^{k}$. Hence, using the $m$-periodicity of $\x$ and the fact that $\x|_{1}^{m}=\tv\ov$, we have
	\[
	\begin{split}
	\x|_1^{n-k+m}&=\x|_1^{n-k}\ov\tv=\tv\ov\x|_{m+1}^{n-k}\ov\tv=\tv\ov\x|_1^{n-k-m}\ov\tv\\
	&=\tv\ov\x|_{k+1}^{n-m}\ov\tv=\tv\ov\x|_{k+1}^n=\x|_{k-m+1}^n,
	\end{split}
	\]
as required.
\end{proof}

\section{Construction of the Mass Distribution} \label{mass distribution section}

Let $\Phi=\{f_i\}_{i\in\Lambda}$ be a conformal iterated function system satisfying the open set condition. Throughout the next three sections, we fix an $\x\in\Sigma$ and a function $\rho\colon\N\mapsto\N$ such that $n \mapsto \rho(n)$ is monotonically increasing and for which
\[\liminf_{n\to\infty}\frac{-1}{n}\log \|f_{\x|_1^{\rho(n)}}'(\xi)\|=:\alpha\in(0,\infty),\]
where $\xi \in X$. Note that by the bounded distortion property \eqref{eq:boundeddist}, we may take $\xi$ to be any element of $X$. { We extend the function $\rho\colon\N\mapsto\N$ to a map $\rho\colon\R^+\mapsto\N$ in a natural way; that is, $\rho(x):=\rho(\lfloor x\rfloor)$.}

Let $s$ be the unique solution of the equation $P(s)=s\alpha$ and suppose that
\[\sum_{n=0}^\infty e^{\alpha s n}\|f_{\x|_1^{\rho(n)}}'(\xi)\|^{s}=~\infty.\]

For a strictly monotonic increasing sequence $\A=(A_k)$ of natural numbers, let
$$
C_\A=\{\jj\in\Sigma:\sigma^{\ell}\jj\notin[\x|_1^{\rho(\ell)}]\text{ for every }A_k\neq\ell\geq0\text{ but }\sigma^{A_k}\jj\in[\x|_1^{\rho(A_k)}]\text{ for every }k\geq1\}.
$$

Observe that for every strictly monotonic increasing sequence $\A$, $C_\A$ is compact and for $\A\neq\A'$, $C_\A\cap C_{\A'}=\emptyset$.

In order to achieve the correct dimension (as given by Theorem \ref{thm:hillvelani}), we restrict ourselves to the sequences $\A$, which are rapidly growing. By taking sequences $\A$ which are rapidly growing, we ensure that $C_{\A} \subset \widehat{W}(\x,\rho)$ but at the same time benefits from as much freedom as possible between consecutive ``hits'' of the shrinking target set. The next lemma will be used to show that there exists an uncountable set of such sequences. To save on notation, let us write
\[\varepsilon(n):=e^{\alpha s n}\|f_{\x|_1^{\rho(n)}}'(\xi)\|^{s}.\]

\begin{lemma}\label{lem:existseq}
There exist sequences $\{n_k\}$ and $\{m_k\}$ such that \begin{enumerate}[(i)] \itemsep=5pt
		\item\label{it:b1} $n_1>\max\left\{4,\frac{4}{\alpha^2}\right\}$,
		\item\label{it:b2} $n_1>\left(\dfrac{-4\log a_{\min}+\alpha}{\alpha}\right)^2$,
		\item\label{it:b3} $\displaystyle{n_1>\frac{-8\log a_{\min}}{\alpha}\left(\frac{-2\log a_{\min}}{\alpha}+2\right)}$,
		\item\label{it:b4} $\displaystyle{C\left(a_{\max}^{s}e^{-\alpha s}\right)^{\min\{1,\frac{\alpha}{-2\log a_{\min}}\}\sqrt{1+\frac{\alpha}{-2\log a_{\min}}}\sqrt{n_1}}}\displaystyle{<1},$
	\end{enumerate}
 and for every $n\geq n_1-\sqrt{n_1}$, we have $\rho(n)\geq\frac{\alpha}{-2\log a_{\min}}n$. Moreover, for every $k\geq1$,

	\vbox{\begin{enumerate}[(1)] \itemsep=5pt
		\item\label{it:1} $n_k+\rho(n_k)<m_k$,
		\item\label{it:2} $\max\{m_{k-1}+\rho(m_{k-1})+2,(2m_{k-1}+\rho(m_{k-1}))^2\}<n_{k}$,
		\item\label{it:3} $\displaystyle{\lim_{\ell\to\infty}\dfrac{C^{(1+s)\ell}a_{\min}^{-2s\ell}e^{s\alpha(\sum_{j=1}^{\ell-1}(m_j+\rho(m_j))+2\ell)}}{\prod_{j=1}^\ell\sum_{k=n_j}^{m_j}\varepsilon(k)}=0},$

		\item\label{it:4} $\displaystyle{\lim_{\ell\to\infty}\dfrac{C^{(1+s)\ell}a_{\min}^{-2s\ell}e^{-s\alpha(n_{\ell}-\sum_{j=1}^{\ell-1}(m_j+\rho(m_j))-2\ell)}}{\prod_{j=1}^{\ell-1}\sum_{k=n_j}^{m_j}\varepsilon(k)}=0}$.
	\end{enumerate}}
Throughout this lemma, $C$ is the constant arising from \eqref{eq:boundeddist}.
\end{lemma}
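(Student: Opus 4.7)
The plan is to secure the initial conditions on $n_1$ first and then build $m_k$ and $n_{k+1}$ alternately by induction. Conditions (i)--(iv) are explicit numerical bounds on $n_1$: each is satisfied as soon as $n_1$ is sufficiently large (in particular (iv) uses that $a_{\max}^s e^{-\alpha s} < 1$, since $\alpha > 0$ forces $e^\alpha > 1 > a_{\max}$, so the left-hand side decays stretched-exponentially in $\sqrt{n_1}$). The inequality $\rho(n) \geq \frac{\alpha}{-2\log a_{\min}} n$ for $n \geq n_1 - \sqrt{n_1}$ I would derive by combining the standing hypothesis $\liminf_{n\to\infty} \frac{-\log \|f_{\x|_1^{\rho(n)}}'(\xi)\|}{n} = \alpha$ with the lower bound $\|f_{\x|_1^{\rho(n)}}'(\xi)\| \geq a_{\min}^{\rho(n)}$ from \eqref{f' composition bounds}. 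This yields $\liminf_n \rho(n)/n \geq \alpha/(-\log a_{\min})$, so $\rho(n)/n \geq \alpha/(-2\log a_{\min})$ for all $n$ beyond some threshold $N_0$; I then enlarge $n_1$ further so that $n_1 - \sqrt{n_1} \geq N_0$ while still satisfying (i)--(iv).

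For the inductive construction, I use the convention $m_0 := 0$, making (2) trivial when $k=1$, and abbreviate $S_j := \sum_{k=n_j}^{m_j} \varepsilon(k)$ and $N_\ell := \sum_{j=1}^{\ell-1}(m_j + \rho(m_j)) + 2\ell$. Given $n_k$ and the previously constructed data, I choose $m_k > n_k + \rho(n_k)$ (which secures (1)) large enough that
\[
S_k \;\geq\; k^2 \cdot \frac{C^{(1+s)k} a_{\min}^{-2sk} e^{s\alpha N_k}}{\prod_{j=1}^{k-1} S_j}.
\]
This is possible because the divergence hypothesis $\sum_n \varepsilon(n) = \infty$ implies $\sum_{k \geq n_k} \varepsilon(k) = \infty$, so the partial sum $\sum_{k=n_k}^{m} \varepsilon(k)$ can be made arbitrarily large by taking $m$ large. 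With this choice the ratio appearing in (3) at level $\ell = k$ is at most $1/k^2$, hence (3) holds in the limit. I then select $n_{k+1}$ to be any integer exceeding
\[
\max\!\left\{m_k + \rho(m_k) + 2,\ (2m_k + \rho(m_k))^2,\ N_{k+1} + \tfrac{1}{s\alpha}\log\!\left(k \cdot \frac{C^{(1+s)(k+1)} a_{\min}^{-2s(k+1)}}{\prod_{j=1}^k S_j}\right)\!\right\},
\]
which simultaneously handles (2) and forces the ratio in (4) at level $\ell = k+1$ to be at most $1/k$.

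The hard part is the mutual dependence between (3) and (4): each $m_k$ appears in the numerators $e^{s\alpha N_\ell}$ of both conditions for every $\ell > k$, so choosing $m_k$ large in order to satisfy (3) at level $k$ inflates the requirements at all later levels. This is reconciled by exploiting the divergence of $\sum \varepsilon(n)$ \emph{again at each later step}: since tails of a divergent series remain divergent, at every stage $\ell$ the quantity $S_\ell$ can be made as large as needed to dominate the (by now larger) numerator, and then $n_{\ell+1}$ can be taken even larger to control (4). Thus the induction never gets stuck, and the resulting sequences $\{n_k\}$ and $\{m_k\}$ satisfy all the listed properties by construction; the verification of (3) and (4) in the limit reduces to the elementary observation that the selection rules above make the $\ell$-th ratio at most $1/\ell^2$ and $1/\ell$ respectively.
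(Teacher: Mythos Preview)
Your proposal is correct and follows essentially the same inductive strategy as the paper: choose $n_1$ large enough to satisfy the numerical constraints (i)--(iv) and the threshold for $\rho(n)\ge\frac{\alpha}{-2\log a_{\min}}n$, then alternately pick $m_k$ large (using divergence of $\sum\varepsilon(n)$) to force the ratio in (3) below a null sequence, and $n_{k+1}$ large (using $s\alpha>0$) to force the ratio in (4) below a null sequence. The only cosmetic difference is that the paper uses a generic sequence $p_\ell=2^{-\ell}$ where you use the explicit targets $1/k^2$ and $1/k$; your final paragraph worrying about ``mutual dependence'' is unnecessary, since at stage $k$ the quantity $N_k$ depends only on $m_1,\dots,m_{k-1}$, which are already fixed.
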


{ Essentially, the  terms \eqref{it:3} and \eqref{it:4} of Lemma~\ref{lem:existseq} will play an important role in the proof of Proposition~\ref{prop:massdist}. The construction of such sequences is possible basically because in \eqref{it:4} $n_\ell$ appears only in the numerator in the term $e^{-s\alpha n_\ell}$ and in \eqref{it:3} $m_\ell$ appears only in the denominator in $\sum_{k=n_\ell}^{m_\ell}\varepsilon_k$. This allows us to choose a sufficiently rapidly growing sequence such that these terms converge to zero.}

\begin{proof}
First of all note that, by the definition of $\alpha$, there exists an $N \in \N$ such that for all natural numbers $n \geq N$,
\[\frac{\alpha}{2} \leq \frac{-\log{\|f'_{\x|_1^{\rho(n)}}(\xi)\|}}{n}\] for any $\xi \in X$. Combining this with the bounds in \eqref{f' composition bounds} we see that
\[\frac{\alpha}{2} \leq \frac{-\log{a_{\min}^{\rho(n)}}}{n} = \frac{-\rho(n)\log{a_{\min}}}{n}.\]
Thus, for all $n \geq N$,
\[\rho(n) \geq \frac{\alpha}{-2\log{a_{\min}}}n.\]

We now construct sequences $\{n_k\}$ and $\{m_k\}$ inductively. Let us fix an arbitrary sequence converging to $0$, say $p_n=2^{-n}$.
We begin by choosing $n_1$ sufficiently large so that \eqref{it:b1}-\eqref{it:b4} hold and $n_1-\sqrt{n_1}\geq N$. Then choose $m_1$ such that $n_1+\rho(n_1)<m_1$. We then proceed by induction. Suppose that $n_k$ and $m_k$ satisfying \eqref{it:1}--\eqref{it:4} have already been defined for $k=1,\ldots,\ell-1$. Next, find $n_\ell$ such that \eqref{it:2} holds and
$$
\dfrac{C^{(1+s)\ell}a_{\min}^{-2s\ell}e^{-s\alpha(n_{\ell}-\sum_{j=1}^{\ell-1}(m_j+\rho(m_j))-2\ell)}}{\prod_{j=1}^{\ell-1}\sum_{k=n_j}^{m_j}\varepsilon(k)}<p_\ell.
$$
This is possible since $s,\alpha>0$. We then find $m_\ell$ so that \eqref{it:1} holds and
$$
\dfrac{C^{(1+s)\ell}a_{\min}^{-2s\ell}e^{s\alpha(\sum_{j=1}^{\ell-1}(m_j+\rho(m_j))+2\ell)}}{\prod_{j=1}^\ell\sum_{k=n_j}^{m_j}\varepsilon(k)}<p_\ell.
$$
This is possible by the divergence of $\sum_{k=1}^{\infty}\varepsilon(k)$. By construction, the sequences $\{n_k\}$ and $\{m_k\}$ satisfy all of the required conditions, thus completing the proof of the lemma.
\end{proof}

Let $\Xi$ be the set of sequences such that $A_k\in[n_k,m_k]$ for every $k\geq1$. In the rest of the paper, we construct the mass distribution $\eta$ as follows. We define a family of probability measures $\{\mu_\A\}_{\A\in\Xi}$, where $\mu_\A$ is supported on $C_\A$, and an appropriate probability measure $\nu$ on $\Xi$. We will then set $\eta=\int\mu_\A d\nu(\A)$. Clearly, $\eta(\widehat{W}(\x,\rho))=1$ since, for every $\A \in \Xi$, $C_{\A} \subset \widehat{W}(\x,\rho)$.

Let us define $R\colon\R^+\mapsto\N$ as follows
$$
R(x):=\max\{m\in\N:x\geq m+\rho(m)\}.
$$
We adopt the convention that $R(n) = 1$ if $n < 1+\rho(1)$.
Since $\rho(n)\geq\frac{\alpha}{-2\log a_{\min}}n$ for every $n\geq n_1-\sqrt{n_1}$, it follows from the definition of $R(n)$ that
\[n \geq R(n)+\rho(R(n)) \geq \left(1+\frac{\alpha}{-2\log{a_{\min}}}\right)R(n) \quad \]
whenever $R(n) \geq n_1-\sqrt{n_1}$. In particular, for every $n \geq n_1-\sqrt{n_1} + \rho(n_1-\sqrt{n_1})$ we have
\begin{equation}\label{eq:ubR}
R(n)\leq\frac{-2\log a_{\min}}{\alpha-2\log a_{\min}}n.
\end{equation}

Let $p+\rho(p)+2<q$ be integers such that $p \geq 0$. Let
\begin{equation}\label{eq:defomega}
\begin{split}
\Omega_{p,q}:=\left\{\ii\in\Lambda^{q-p-2}:\right.&\ii|_{1}^{\ell+\rho(\ell)-p-1}\neq\x|_{p-\ell+2}^{\rho(\ell)}\text{ for every }\ell=R(p+\sqrt{p}+1)+1,\ldots,p\\
&\ii|_{\ell+1}^{\ell+\rho(\ell+p+1)}\neq\x|_1^{\rho(\ell+p+1)}\text{ for }\ell=0,\ldots,R(q-1)-p-1, \text{ and}\\
&\left.\ii|_{\ell+1}^{q-p-2}\neq\x|_1^{q-p-\ell-2}\text{ for }\ell=R(q-1)-p,\ldots,q-\sqrt{q}\right\}.
\end{split}
\end{equation}
{ By the choice of $p$ and $q$ we see that the subwords in the definition of the set $\Omega_{p,q}$ are well defined, since $\ell+\rho(\ell)-p-1\leq \rho(p)-1\leq q-p-2$ and $\ell+\rho(\ell+p+1)\leq R(q-1)-p-1+\rho(R(q-1))\leq q-p-2$ and $R(q-1)-p\geq0$ by the definition of $R$. We will later show that the set $\Omega_{p,q}$ is non-empty by giving a lower bound on its measure. }

Let us recall that
$$
m(\x,n)=\min\{k:1\leq k\text{ and }\x|_1^{n-k}=\x|_{k+1}^n\}.
$$
Observe that if $\x|_1^{n-m(\x,n)}=\x|_{m(\x,n)+1}^n$ then $\x|_1^{n-m(\x,n)-1}=\x|_{m(\x,n)+1}^{n-1}$, and hence,
\begin{equation}\label{eq:mmon}
m(\x,n-1)\leq m(\x,n)\text{ for every }n\geq3.
\end{equation}

Next, we define two functions $\ov,\tv\colon\Xi\mapsto\Sigma$. For $\A \in \Xi$, the $k$th coordinates of $\ov$ and $\tv$ will depend solely upon on the $k$th coordinate $A_k$ of $\A$, we will use the notation
\begin{eqnarray*}
&\ov(\A) &= (\omega_1(A_1), \omega_2(A_2), \dots, \omega_k(A_k), \dots), \\
\text{and} \quad &\tv(\A) &= (\tau_1(A_1), \tau_2(A_2), \dots, \tau_k(A_k),\dots).
\end{eqnarray*} For a sequence $\A\in\Xi$, we define the $k$th coordinate of $\ov$ as to be arbitrary but {\it not} equal to $x_{m(\x,\rho(A_k-\sqrt{A_k}))}$. Similarly, for $\A\in\Xi$, define $\tau_k(A_k)$ as to be arbitrary but {\it not} equal to $x_{\rho(A_k)-\left\lfloor\frac{\rho(A_k)}{m(\x,\rho(A_k))}\right\rfloor\cdot m(\x,\rho(A_k))+1}$.

\begin{comment}
For a sequence $\A\in\Xi$, we define the $k$th coordinate of $\ov$ as follows:
\begin{itemize} \itemsep=5pt
\item{if $m(\x,\rho(A_k-\sqrt{A_k}))<\rho(A_k-\sqrt{A_k})/2$, then choose $\omega_k(A_k)$ to be arbitrary but {\it not} equal to $x_{m(\x,\rho(A_k-\sqrt{A_k}))}$,} \item{otherwise let $\omega_k(A_k)$ be arbitrary.}
\end{itemize}
Similarly, for $\A\in\Xi$, define $\tau_k(A_k)$ as follows:
\begin{itemize} \itemsep=5pt
\item{if $m(\x,\rho(A_k))<\rho(A_k)/2$ then choose $\tau_k(A_k)$ to be arbitrary but {\it not} equal to $x_{\rho(A_k)-\left\lfloor\frac{\rho(A_k)}{m(\x,\rho(A_k))}\right\rfloor\cdot m(\x,\rho(A_k))+1}$,}
\item{otherwise let $\tau_k(A_k)$ be arbitrary.}
\end{itemize}
\end{comment}

\begin{lemma}\label{lem:gamset}
	Let $y\in\Lambda$ be such that $y\neq x_1$ and let $\y=(y,y,\ldots)$. For an $\A\in\Xi$, let
	\begin{equation}\label{eq:gamma}
	\Gamma_\A:=[\y|_1^{A_1-1}]\times\prod_{k=1}^\infty\left(\{\omega_k(A_k)\x|_{1}^{\rho(A_k)}\tau_k(A_k)\}\times\Omega_{A_k+\rho(A_k),A_{k+1}}\right).
	\end{equation}
Then $\Gamma_\A\subseteq C_\A$ for every $\A\in\Xi$.
\end{lemma}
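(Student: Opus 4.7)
To show $\Gamma_\A\subseteq C_\A$, I must verify, for each $\jj\in\Gamma_\A$, the two defining properties of $C_\A$: that $\sigma^{A_k}\jj\in[\x|_1^{\rho(A_k)}]$ for every $k\geq 1$, and $\sigma^{\ell}\jj\notin[\x|_1^{\rho(\ell)}]$ for every $\ell\geq 0$ with $\ell\neq A_k$ for any $k$. The ``hits'' property is immediate from the structure of $\Gamma_\A$: the factor of the product associated with $A_k$ is the word $\omega_k(A_k)\,\x|_1^{\rho(A_k)}\,\tau_k(A_k)$, placed so that position $A_k$ of $\jj$ carries $\omega_k(A_k)$, positions $A_k+1,\dots,A_k+\rho(A_k)$ carry $x_1,\dots,x_{\rho(A_k)}$, and position $A_k+\rho(A_k)+1$ carries $\tau_k(A_k)$.

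The substance of the argument lies in the ``non-hits'' property, which I would establish by a case analysis on the location of $\ell$ within the deterministic skeleton of $\jj$. In the initial region $0\leq\ell<A_1$, if $\ell+1<A_1$ then the $(\ell+1)$-th coordinate of $\jj$ equals $y\neq x_1$ and the non-hit is immediate; the boundary case $\ell=A_1-1$ is handled by the defining property of $\omega_1(A_1)$ together with the monotonicity \eqref{eq:mmon} of the minimal-period function $m(\x,\cdot)$, since a hit here would force $\x|_1^{\rho(A_1-1)}$ to be constant and hence $m(\x,\rho(A_1-\sqrt{A_1}))=1$. In the ``near-hit'' region $\ell\in\{A_k+1,\dots,A_k+\rho(A_k)-1\}$, the shift $\sigma^{\ell}\jj$ begins with a proper suffix of $x_1,\dots,x_{\rho(A_k)}$ followed by $\tau_k(A_k)$; any accidental hit would force $\x|_1^{\rho(A_k)}$ to possess an internal period $r=\ell-A_k$, so by Lemma~\ref{lem:avoidcollision1} the admissible offsets are precisely the multiples of the minimal period $m=m(\x,\rho(A_k))$, and on this progression the value $x_{\rho(A_k)-r+1}$ is constant and equal to $x_{(\rho(A_k)\bmod m)+1}$, which is exactly the symbol excluded in the definition of $\tau_k(A_k)$. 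In the remaining ``gap'' region $A_k+\rho(A_k)\leq\ell<A_{k+1}$, the window $[\ell+1,\ell+\rho(\ell)]$ meets the filler word $\ii\in\Omega_{A_k+\rho(A_k),A_{k+1}}$, and the three conditions in the definition \eqref{eq:defomega} of $\Omega_{p,q}$ are engineered to forbid every possible positioning of this window: condition (1) handles shifts with $\ell\leq p:=A_k+\rho(A_k)$ whose window protrudes from the boundary of the $\x$-block into the filler, condition (2) handles shifts whose window lies entirely inside the filler, and condition (3) handles shifts whose window reaches past the filler toward $A_{k+1}$.

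The main obstacle is the ``near-hit'' region: a \emph{single} forbidden symbol $\tau_k(A_k)$ must simultaneously block every potential collision $\ell=A_k+jm$, and this succeeds only because Lemma~\ref{lem:avoidcollision1} collapses the set of dangerous offsets to an arithmetic progression on which the relevant target character of $\x$ is constant in $j$. A secondary delicate point is checking that the index ranges in the three conditions of $\Omega_{p,q}$, written in terms of $R(\cdot)$ and $\sqrt{\cdot}$-shifts, genuinely cover the entire gap $\{A_k+\rho(A_k),\dots,A_{k+1}-1\}$ with no shift slipping through; here the upper bound \eqref{eq:ubR} on $R$ together with the growth assumptions on $\{n_k\}$ and $\{m_k\}$ from Lemma~\ref{lem:existseq} are exactly what is needed to patch the three subranges together.
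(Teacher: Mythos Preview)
Your overall architecture is right, and the treatment of the initial block and of the shifts that lie wholly inside the filler is fine. However, two of your five claimed regions are not handled by the mechanism you assign to them.

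\textbf{The near-hit range.} You claim that $\tau_k(A_k)$ alone blocks every $\ell\in\{A_k+1,\dots,A_k+\rho(A_k)-1\}$. Your argument relies on Lemma~\ref{lem:avoidcollision1} to reduce the admissible offsets $r=\ell-A_k$ to multiples of $m=m(\x,\rho(A_k))$, but that lemma has the hypothesis $m<\rho(A_k)/2$. A hit at $\ell$ only yields $m\le r$, so when $r$ is large (say $r\ge\rho(A_k)/2$) the hypothesis need not hold, and indeed there are words (e.g.\ $\x|_1^5=aabaa$, with $m=3$ and a second ``period'' $r=4$) for which $m\nmid r$ and $x_{\rho(A_k)-r+1}\neq x_{\rho(A_k)-m+1}$, so $\tau_k$ gives no contradiction. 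The paper therefore splits this range: only the portion with $r\le R(p+\sqrt{p}+1)-A_k\le\sqrt{A_k+\rho(A_k)}+1<\rho(A_k)/2$ is handled via $\tau_k$ and Lemma~\ref{lem:avoidcollision1}; the remaining $\ell$ up to $A_k+\rho(A_k)$ have their $\rho(\ell)$-window extending well into the filler, and it is the \emph{first} clause of $\Omega_{p,q}$ (with $p=A_k+\rho(A_k)$) that supplies the contradiction there.

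\textbf{The top of the gap.} You assert that the three clauses of $\Omega_{p,q}$ cover the whole interval $\{A_k+\rho(A_k),\dots,A_{k+1}-1\}$. They do not: the third clause ranges only up to $q-\sqrt{q}$, which in global coordinates stops at $A_{k+1}-1-\lfloor\sqrt{A_{k+1}}\rfloor$. For $\ell\in[A_{k+1}-\lfloor\sqrt{A_{k+1}}\rfloor,\,A_{k+1}-1]$ a hit at $\ell$, combined with the genuine hit at $A_{k+1}$, forces $\x|_{A_{k+1}-\ell+1}^{\rho(\ell)}=\x|_1^{\rho(\ell)-A_{k+1}+\ell}$ and hence a short period on $\x|_1^{\rho(A_{k+1}-\sqrt{A_{k+1}})}$; one then applies Lemma~\ref{lem:avoidcollision1} and finally obtains $j_{A_{k+1}}=\omega_{k+1}(A_{k+1})=x_{m(\x,\rho(A_{k+1}-\sqrt{A_{k+1}}))}$, contradicting the definition of $\omega_{k+1}$. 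This is the paper's case~(5), and it is exactly symmetric to your $\tau_k$ argument but uses $\omega_{k+1}$ on the \emph{left} boundary of the next $\x$-block. You invoke $\omega$ only once, at $\ell=A_1-1$; the role of $\omega_{k+1}$ for every $k\ge1$ is missing from your sketch.
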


Note that $\Gamma_\A\subseteq\widehat{W}(\x,\rho)$ for every $\A\in\Xi$ clearly by definition. However, later in the proof we need the more technical containment $\Gamma_\A\subseteq C_\A$, since then for any $\ii\in\bigcup_{\A\in\Xi}\Gamma_\A$ there exists a unique $\A\in\Xi$ such that $\ii\in\Gamma_\A$, and this is heavily used during the proof of Proposition~\ref{prop:massdist}.

\begin{proof}[Proof of Lemma~\ref{lem:gamset}]
    Observe that if $\ii=(i_1,i_2,\ldots)\in\Gamma_\A$, then $\ii|_1^{A_1-1}=\y|_1^{A_1-1}$ and, for every $k\geq1$, we have:
\begin{itemize} \itemsep=5pt
\item{$\displaystyle{i_{A_k}=\omega_k(A_k)}$,}
\item{$\displaystyle{i_{A_k+\rho(A_k)+1}=\tau_{k}(A_k)}$,}
\item{$\displaystyle{\ii|_{A_k+1}^{A_k+\rho(A_k)}=\x|_1^{\rho(A_k)}}$, and}
\item{$\displaystyle{\ii|_{A_k+\rho(A_k)+2}^{A_{k+1}-1}\in\Omega_{A_k+\rho(A_k),A_{k+1}}}$.}
\end{itemize}
In particular, $\sigma^{A_k}\ii\in[\x|_1^{\rho(A_k)}]$ for every $k\geq1$.
	
    Now let $\ell\notin\A$. To show that $\ii \in C_{\A}$, we need to show that $\sigma^\ell\ii\notin[\x|_1^{\rho(\ell)}]$. We argue by contradiction, so suppose to the contrary that $\sigma^\ell\ii\in[\x|_1^{\rho(\ell)}]$. It is easy to see that $\ell<A_1$ is not possible since $y\neq x_1$. Hence, we may assume that $A_k<\ell<A_{k+1}$ for some $k\geq1$.    
    
    There are five possible cases to consider:
    \begin{enumerate}
    \item\label{it:c0a} $A_k<\ell\leq R\left(A_k+\rho(A_k)+\left\lfloor\sqrt{A_k+\rho(A_k)}\right\rfloor+1\right)$
    \item\label{it:c1} $R\left(A_k+\rho(A_k)+\left\lfloor\sqrt{A_k+\rho(A_k)}\right\rfloor+1\right)+1\leq\ell\leq A_{k}+\rho(A_k)$
    \item\label{it:c2} $A_{k}+\rho(A_k)+1\leq\ell\leq R(A_{k+1}-1)$,
    \item\label{it:c3} $R(A_{k+1}-1)+1\leq\ell\leq A_{k+1}-1-\left\lfloor\sqrt{A_{k+1}}\right\rfloor$,
    \item\label{it:c0b} $A_{k+1}-\left\lfloor\sqrt{A_{k+1}}\right\rfloor\leq\ell< A_{k+1}$
    \end{enumerate}

First, consider the case \eqref{it:c0a}. Recall that $\sigma^{A_k}\ii\in[\x|_1^{\rho(A_k)}]$ and we are assuming, in order to eventually reach a contradiction, that $\sigma^{\ell}\ii\in[\x|_1^{\rho(\ell)}]$. Consequently, 
\begin{align*}
\ii|_{A_k+1}^{A_k+\rho(A_k)} = \x|_{1}^{\rho(A_k)} \qquad \text{and} \qquad
\ii|_{\ell+1}^{\ell+\rho(\ell)} = \x|_{1}^{\rho(\ell)}.
\end{align*}
So, it follows that
\[\x|_{\ell-A_k+1}^{\rho(A_k)} = \ii|_{A_k+1+(\ell-A_k)}^{A_{k}+\rho(A_k)} = \ii|_{\ell + 1}^{\rho(A_k)+A_k} = \x|_{1}^{\rho(A_k)+A_k-(\ell+1)+1} = \x|_{1}^{\rho(A_k)-\ell+A_k}.\]
%Hence,
%$$
%\x|_{\ell-A_k+1}^{\rho(A_k)}=\ii|_{\ell+1}^{\rho(A_k)+A_k}=\x|_1^{\rho(A_k)-\ell+A_k}.
%$$
Thus, we have
\begin{align*}
	m(\x,\rho(A_k))&\leq\ell-A_k \\
	&\leq R\left(A_k+\rho(A_k)+1+\left\lfloor\sqrt{A_k+\rho(A_k)}\right\rfloor\right)-A_k\\
	&\leq \rho(A_k) +1 +\sqrt{A_k+\rho(A_k)}-\rho\left(R\left(A_k+\rho(A_k)+1+\left\lfloor\sqrt{A_k+\rho(A_k)}\right\rfloor\right)\right)\\
	&\leq\rho(A_k) +1 +\sqrt{A_k+\rho(A_k)}-\rho\left(R\left(A_k+\rho(A_k)\right)\right)\\
	&=\rho(A_k) +1 +\sqrt{A_k+\rho(A_k)}-\rho\left(A_k\right)\\
	&=\sqrt{A_k+\rho(A_k)}+1.
\end{align*}

By Lemma~\ref{lem:existseq}\eqref{it:b3} and the fact that $\rho(A_k)\geq\frac{\alpha A_k}{-2\log{a_{\min}}}$, we have
\[A_k<\frac{\alpha}{-2\log a_{\min}}A_k\left(\frac{\alpha}{-8\log a_{\min}}A_k-2\right)<\rho(A_k)\left(\frac{\rho(A_k)}{4}-2\right)+1.\]
Hence, $m(\x,\rho(A_k))\leq\ell-A_k\leq\sqrt{A_k+\rho(A_k)}+1<\rho(A_k)/2\leq\rho(A_k)-m(\x,\rho(A_k))$. So, it follows from Lemma~\ref{lem:avoidcollision1} that there exists $p\in\N$ such that $\ell-A_k=p\cdot m(\x,\rho(A_k))$ and also that $\x$ is $m(\x,\rho(A_k))$-periodic on $(1,\rho(A_k))$. However, since $\ii|_{\ell+1}^{\ell+\rho(\ell)}=\x|_1^{\rho(\ell)}$, we have
\begin{multline*}\tau_k(A_k)=i_{A_k+\rho(A_k)+1}=i_{A_k-\ell+\ell+\rho(A_k)+1}=i_{\rho(A_k)-p\cdot m(\x,\rho(A_k))+\ell+1}\\
	=x_{\rho(A_k)-p\cdot m(\x,\rho(A_k))+1}=x_{\rho(A_k)-\left\lfloor\frac{\rho(A_k)}{m(\x,\rho(A_k))}\right\rfloor\cdot m(\x,\rho(A_k))+1}\neq\tau_k(A_k),
\end{multline*}
which contradicts the definition of $\tau_k(A_k)$.

Suppose that \eqref{it:c1} holds. Since $\sigma^\ell\ii\in[\x|_1^{\rho(\ell)}]$, we have $\ii|_{A_k+\rho(A_k)+2}^{\ell+\rho(\ell)}=\x|_{A_k+\rho(A_k)-\ell+2}^{\rho(\ell)}$. However, $\ii|_{A_k+\rho(A_k)+2}^{A_{k+1}-1}\in\Omega_{A_k+\rho(A_k),A_{k+1}}$ and so, by definition, we have $\ii|_{A_k+\rho(A_k)+2}^{\ell+\rho(\ell)}\neq\x|_{A_k+\rho(A_k)-\ell+2}^{\rho(\ell)}$, which is a contradiction.

Similarly, if \eqref{it:c2} holds then again by definition $\ii|_{\ell+1}^{\ell+\rho(\ell)}\neq\x|_1^{\rho(\ell)}$, and hence $\sigma^\ell\ii\notin[\x|_1^{\rho(\ell)}]$, a contradiction.

If \eqref{it:c3} holds then, by $\sigma^\ell\ii\in[\x|_1^{\rho(\ell)}]$, we have $\ii|_{\ell+1}^{A_{k+1}-1}=\x|_1^{A_{k+1}-1-\ell}$. However, it follows from the fact that $\ii|_{A_k+\rho(A_k)+2}^{A_{k+1}-1} \in \Omega_{A_k+\rho(A_k),A_{k+1}}$ that $\ii|_{\ell+1}^{A_{k+1}-(A_k+\rho(A_k))-2} \neq \x|_{1}^{A_{k+1}-(A_k+\rho(A_k))-\ell-2}$. In particular, this implies that $\ii|_{\ell+1}^{A_{k+1}-1} \neq \x_{1}^{A_{k+1}-1-\ell}$, which is again a contradiction.

Finally, let us consider the case \eqref{it:c0b}, which proof is similar to the case \eqref{it:c0a}. Since $\sigma^\ell\ii\in[\x|_1^{\rho(\ell)}]$ and $\sigma^{A_{k+}}\ii\in[\x|_1^{\rho(A_{k+1})}]$, we have $$\x|_{A_{k+1}+1-\ell}^{\rho(\ell)}=\ii|_{A_{k+1}+1}^{\rho(\ell)+\ell}=\x|_1^{\rho(\ell)+\ell-A_{k+1}}.$$
Thus, $m(\x,\rho(\ell))\leq A_{k+1}-\ell\leq\sqrt{A_{k+1}}$. Observe that, by \eqref{eq:mmon}, we have $m(\x,\rho(\ell))\geq m(\x,\rho(A_{k+1}-\lfloor \sqrt{A_{k+1}} \rfloor))$ and also note that $\x|_{A_{k+1}+1-\ell}^{\rho(A_{k+1}-\lfloor \sqrt{A_{k+1}} \rfloor)}=\x|_1^{\rho(A_{k+1}-\lfloor \sqrt{A_{k+1}} \rfloor)+\ell-A_{k+1}}$. By Lemma~\ref{lem:existseq}, since $\A\in\Xi$, we have
	$$
	\rho(A_{k+1}-\lfloor\sqrt{A_{k+1}}\rfloor)\geq \frac{\alpha}{-2\log a_{\min}}\left(A_{k+1}-\sqrt{A_{k+1}}\right).
	$$
By Lemma~\ref{lem:existseq}\eqref{it:b2}, we have $\sqrt{A_{k+1}}<\frac{\alpha}{-4\log a_{\min}}\left(A_{k+1}-\sqrt{A_{k+1}}\right)$ and, hence,
\[
\begin{split}
m(\x,\rho(A_{k+1}-\lfloor \sqrt{A_{k+1}} \rfloor))&\leq A_{k+1}-\ell<\frac{\rho(A_{k+1}-\lfloor \sqrt{A_{k+1}} \rfloor)}{2}\\
&\leq \rho(A_{k+1}-\sqrt{A_{k+1}})-m(\x,\rho(A_{k+1}-\lfloor \sqrt{A_{k+1}} \rfloor).	
\end{split}
\]
Again, it follows from Lemma \ref{lem:avoidcollision1} that $\x$ is $m(\x,\rho(A_{k+1}-\lfloor \sqrt{A_{k+1}} \rfloor))$-periodic on\linebreak \mbox{$(1,\rho(A_{k+1}-\lfloor \sqrt{A_{k+1}} \rfloor))$}. Furthermore, we also have that there exists $q\in\N$ such that\linebreak \mbox{$q\cdot m(\x,\rho(A_{k+1}-\lfloor \sqrt{A_{k+1}} \rfloor))=A_{k+1}-\ell$.} Hence, by the definition of $\omega_{k+1}(A_{k+1})$ and using that $\ii|_{\ell+1}^{\rho(\ell)+\ell}=\x|_1^{\rho(\ell)}$, we have
\begin{align*}
	\omega_{k+1}(A_{k+1})=i_{A_{k+1}}&=i_{A_{k+1}-\ell+\ell}=i_{q\cdot m(\x,\rho(A_{k+1}-\lfloor \sqrt{A_{k+1}} \rfloor))+\ell} \\
	                                       &=x_{q\cdot m(\x, \rho(A_{k+1}-\lfloor \sqrt{A_{k+1}} \rfloor))}=x_{m(\x,\rho(A_{k+1}-\lfloor \sqrt{A_{k+1}} \rfloor))}\neq\omega_{k+1}(A_{k+1}),
\end{align*}
which is a contradiction. \end{proof}

For $\ii \in \Lambda^{q-p-2}$, let
\[\PP_{p,q}(\ii)=\PP([\ii]),\]
where $\PP$ is the measure defined by \eqref{eq:measP}. Thus, $\PP_{p,q}$ is the probability measure supported on $\Lambda^{q-p-2}$ corresponding to the equilibrium state $\PP$.

Denote by $\widehat{\Omega}_{p,q}$ the subset of $\Sigma$ such that $\ii\in\widehat{\Omega}_{p,q}$ if and only if $\ii|_{p+2}^{q-1}\in\Omega_{p,q}$. Since $\PP$ is $\sigma$-invariant by definition, $\PP(\widehat{\Omega}_{p,q})=\PP_{p,q}(\Omega_{p,q})$.

\begin{lemma}\label{lem:boundomega}
There exists a constant $C'>0$ such that for every $p>n_1$ (with $n_1$ as defined in Lemma~\ref{lem:existseq}) and for every $q>\max\{p+\rho(p)+2,(p+2)^2\}$,
	$$
	\PP_{p,q}(\Omega_{p,q})\geq1-C'\left(a_{\max}^{s}e^{-\alpha s}\right)^{\min\left\{\sqrt{p},\sqrt{q}-p,\frac{\alpha p}{-2\log a_{\min}}\right\}}.
	$$
\end{lemma}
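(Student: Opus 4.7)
The plan is to bound $\PP_{p,q}(\Omega_{p,q}^c)$ via a union bound over the three families of forbidden matching conditions that define $\Omega_{p,q}$. The workhorse is the Gibbs estimate \eqref{eq:measP} combined with \eqref{f' composition bounds}, which gives
\[
\PP([\jj]) \;\ll\; e^{-\alpha s |\jj|}\|f_{\jj}'(x)\|^{s} \;\leq\; \gamma^{|\jj|}, \qquad \text{where } \gamma:=a_{\max}^{s}e^{-\alpha s}<1.
\]
Since $\PP$ is $\sigma$-invariant, for any fixed target word $\jj$ of length $k$ and any window $\{a,\ldots,a+k-1\}\subseteq\{1,\ldots,q-p-2\}$ we have
$\PP_{p,q}(\{\ii:\ii|_{a}^{a+k-1}=\jj\}) = \PP([\jj]) \ll \gamma^{k}$. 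Summing this estimate over the three lists of $(\text{window},\text{target})$ pairs in the definition of $\Omega_{p,q}$ will give the result; the delicate point is showing each of the three sums collapses to a geometric tail.

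\textbf{First family} ($\ell\in\{R(p+\sqrt{p}+1)+1,\ldots,p\}$): the probability of the bad event is $\ll\gamma^{\ell+\rho(\ell)-p-1}$. By the very definition of $R$, for the smallest admissible $\ell$ we have $\ell+\rho(\ell)>p+\sqrt{p}+1$, so the exponent is $>\sqrt{p}$. As $\ell$ increases by $1$, monotonicity of $\rho$ forces $\ell+\rho(\ell)$ to grow by at least $1$, so the sum is bounded by a geometric tail and is $\ll \gamma^{\sqrt{p}}$.

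\textbf{Second family} ($\ell\in\{0,\ldots,R(q-1)-p-1\}$): the probability of the bad event is $\ll\gamma^{\rho(\ell+p+1)}$. Here $\rho$ may be flat, which is why the statement degrades to $\tfrac{\alpha p}{-2\log a_{\min}}$ rather than to $p$. I would use the linear lower bound $\rho(n)\geq\frac{\alpha n}{-2\log a_{\min}}$ valid for $n\geq n_{1}-\sqrt{n_{1}}$ (established in the preamble of Lemma~\ref{lem:existseq}), together with the hypothesis $p>n_{1}$, to sum a genuine geometric series in the variable $m=\ell+p+1$ and obtain the bound $\ll\gamma^{\alpha p/(-2\log a_{\min})}$.

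\textbf{Third family} ($\ell\in\{R(q-1)-p,\ldots,q-\sqrt{q}\}$): the probability of the bad event is $\ll\gamma^{q-p-\ell-2}$. Reindexing by $k=q-p-\ell-2$ turns this into a sum over an interval of integers whose smallest value is $\geq\sqrt{q}-p-2$, yielding a contribution $\ll\gamma^{\sqrt{q}-p}$.

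Adding the three contributions and absorbing additive constants into $C'$ yields
\[
\PP_{p,q}(\Omega_{p,q}^{c}) \;\ll\; \gamma^{\sqrt{p}}+\gamma^{\alpha p/(-2\log a_{\min})}+\gamma^{\sqrt{q}-p} \;\leq\; C'\,\gamma^{\,\min\{\sqrt{p},\,\sqrt{q}-p,\,\alpha p/(-2\log a_{\min})\}},
\]
which is exactly the required estimate. The main obstacle is the second family: because $\rho$ need not be strictly increasing, one cannot simply observe that consecutive exponents differ by $\geq 1$, so the linear lower bound on $\rho$ provided by the setup of Lemma~\ref{lem:existseq} is essential to recover a geometric sum there; the first and third families are comparatively routine, as the relevant exponents strictly increase in $\ell$ (respectively in $-\ell$).
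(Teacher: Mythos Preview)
Your proposal is correct and follows essentially the same route as the paper: a union bound over the three families of forbidden matches, the Gibbs estimate $\PP([\jj])\ll\gamma^{|\jj|}$ via \eqref{eq:measP} and \eqref{f' composition bounds}, and then summing each family as a geometric tail (using strict monotonicity of $\ell+\rho(\ell)$ for the first, the linear lower bound on $\rho$ for the second, and reindexing for the third). You have also correctly identified the second family as the only nontrivial step and the reason the exponent $\tfrac{\alpha p}{-2\log a_{\min}}$ appears; the paper's argument is the same, with the hypothesis $q>(p+2)^{2}$ used precisely to ensure the reindexed lower endpoint $\sqrt{q}-p-2$ in the third family is positive.
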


\begin{proof}
First, note that $\min\{\rho(\ell+p+1),q-p-2-\ell\}=\rho(\ell+p+1)$ if and only if $\ell \leq R(q-1)-p-1$. Furthermore, note that $a_{\max}^s e^{-\alpha s} < 1$ since $\alpha > 0$ and $s>0$.
	
	Denote the complement of $\widehat{\Omega}_{p,q}$ by $\widehat{\Omega}_{p,q}^c$. { Since $\widehat{\Omega}_{p,q}=\bigcup_{\ii\in\Omega_{p,q}}\sigma^{-(p+1)}[\ii]$, using the definition of $\Omega_{p,q}$ in \eqref{eq:defomega} we have that}
\begin{equation} \label{complement inclusion}
\widehat{\Omega}_{p,q}^{c} \subseteq \left(\bigcup_{\ell=R(p+\sqrt{p}+1)+1}^{p}\sigma^{-(p+1)}\left[\x|_{p-\ell+2}^{\rho(\ell)}\right]\right)\bigcup\left(\bigcup_{\ell=0}^{q-\sqrt{q}}\sigma^{-(\ell +p+1)}\left[\x|_1^{\min\{\rho(\ell+p+1),q-p-2-\ell\}}\right]\right).
\end{equation}

	 Then by the $\sigma$-invariance of $\PP$, for every $q>p+\rho(p)+2$ we have
\[\PP(\widehat{\Omega}_{p,q}^c) \leq \sum_{\ell=R(p+\sqrt{p}+1)+1}^{p}\PP\left(\left[\x|_{p-\ell+2}^{\rho(\ell)}\right]\right)+\sum_{\ell=0}^{q-\sqrt{q}}\PP\left(\left[\x|_1^{\min\{\rho(\ell+p+1),q-p-2-\ell\}}\right]\right). \]
Here, the assumption that $q > p+\rho(p)+2$ ensures that $\left[\x|_1^{\min\{\rho(\ell+p+1),q-p-2-\ell\}}\right] \neq \emptyset$ for $\ell \geq 0$. It then follows from \eqref{eq:measP} and \eqref{f' composition bounds} that
\begin{align*}
	\PP(\widehat{\Omega}_{p,q}^c)&\leq C\sum_{\ell=R(p+\sqrt{p}+1)+1}^{p}\|f_{\x|_{p-\ell+2}^{\rho(\ell)}}'(\xi)\|^se^{-(\rho(\ell)-p+\ell-1)\alpha s}\\
	&+C\sum_{\ell=0}^{q-\sqrt{q}}\|f_{\x|_1^{\min\{\rho(\ell+p+1),q-p-2-\ell\}}}'(\xi)\|^se^{-\min\{\rho(\ell+p+1),q-p-2-\ell\}\alpha s} \\
	&\leq C\sum_{\ell=R(p+\sqrt{p}+1)+1}^{p}\left(a_{\max}^{s}e^{-\alpha s}\right)^{\rho(\ell)+\ell-p-1}+C\sum_{\ell=0}^{q-\sqrt{q}}\left(a_{\max}^{s}e^{-\alpha s}\right)^{\min\{\rho(\ell+p+1),q-p-2-\ell\}}.
\end{align*}

Since $\ell\mapsto\rho(\ell)+\ell-p-1$ is a strictly monotonic increasing sequence of integers and
\[\rho(R(p+\sqrt{p}+1)+1)+R(p+\sqrt{p}+1)+1-p-1>p+\sqrt{p}+1-p-1=\sqrt{p},\]
we have
\begin{align*}
\sum_{\ell=R(p+\sqrt{p}+1)+1}^{p}\left(a_{\max}^{s}e^{-\alpha s}\right)^{\rho(\ell)+\ell-p-1} &\leq\sum_{k=\sqrt{p}}^{\infty}a_{\max}^{k s}e^{-\alpha s k} \\
     &=(a_{\max}^se^{-\alpha s})^{\sqrt{p}}\sum_{k=0}^{\infty}{a_{\max}^{ks}e^{-\alpha s k}} \\
     &=\dfrac{\left(a_{\max}^{s}e^{-\alpha s}\right)^{\sqrt{p}}}{1-a_{\max}^se^{-\alpha s}}.
\end{align*}

Next, note that, by Lemma~\ref{lem:existseq}, since $p > n_1$ by assumption, we have \[\rho(\ell+p+1) \geq \frac{\alpha(\ell+p+1)}{-2\log{a_{\min}}}\]
for any $\ell \geq 0$. Hence, we have
\begin{align*}
	\PP(\widehat{\Omega}_{p,q}^c)&\leq \dfrac{C\left(a_{\max}^{s}e^{-\alpha s}\right)^{\sqrt{p}}}{1-a_{\max}^se^{-\alpha s}}+C\sum_{\ell=R(q-1)-p}^{q-\lfloor\sqrt{q}\rfloor}\left(a_{\max}^{s}e^{-\alpha s}\right)^{q-p-2-\ell}+C\sum_{\ell=0}^{R(q-1)-p-1}\left(a_{\max}^{s}e^{-\alpha s}\right)^{\rho(\ell+p+1)}\\
	&\leq \dfrac{C\left(a_{\max}^{s}e^{-\alpha s}\right)^{\sqrt{p}}}{1-a_{\max}^se^{-\alpha s}}+C\sum_{\ell=\sqrt{q}-p-2}^{q-R(q-1)-2}\left(a_{\max}^{s}e^{-\alpha s}\right)^{\ell}+C\sum_{\ell=0}^{R(q-1)-p-1}\left(a_{\max}^{s}e^{-\alpha s}\right)^{\frac{\alpha(\ell+p+1)}{-2\log a_{\min}}}\\
	&\leq \dfrac{C\left(a_{\max}^{s}e^{-\alpha s}\right)^{\sqrt{p}}}{1-a_{\max}^se^{-\alpha s}}+C\sum_{\ell=\sqrt{q}-p-2}^{\infty}\left(a_{\max}^{s}e^{-\alpha s}\right)^{\ell}+C\sum_{\ell=0}^{\infty}\left(a_{\max}^{s}e^{-\alpha s}\right)^{\frac{\alpha(\ell+p+1)}{-2\log a_{\min}}}\\
	&\leq \dfrac{C\left(a_{\max}^{s}e^{-\alpha s}\right)^{\sqrt{p}}}{1-a_{\max}^se^{-\alpha s}}+\dfrac{C\left(a_{\max}^{s}e^{-\alpha s}\right)^{\sqrt{q}-p-2}}{1-a_{\max}^{s}e^{-\alpha s}}+\dfrac{C\left(a_{\max}^{s}e^{-\alpha s}\right)^{\frac{\alpha(p+1)}{-2\log a_{\min}}}}{1-\left(a_{\max}^{s}e^{-\alpha s}\right)^{\frac{\alpha}{-2\log a_{\min}}}} \\
	&\leq C' \left(a_{\max}^{s}e^{-\alpha s}\right)^{\min\left\{\sqrt{p},\sqrt{q}-p,\frac{\alpha p}{-2\log a_{\min}}\right\}}.
\end{align*}
Note that, since $\ell>0$, we require the condition $q > (p+2)^2$ to ensure the middle term above is valid.

	Finally, since $\PP_{p,q}(\Omega_{p,q})=\PP(\widehat{\Omega}_{p,q})=1-\PP(\widehat{\Omega}_{p,q}^c)$, the statement follows.
\end{proof}

\begin{lemma}\label{lem:prod}
	Let $0<p_n<1$ be a sequence such that $\sum_{n=1}^\infty p_n<\infty$ and $\max_n\{p_n\}<1$. Then $\prod_{n=1}^\infty(1-p_n)>0$.
\end{lemma}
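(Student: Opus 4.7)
The plan is to take logarithms of the partial products and reduce the positivity of the infinite product to the convergence of a related series. Specifically, we have
\[
\prod_{n=1}^\infty(1-p_n) = \exp\left(\sum_{n=1}^\infty \log(1-p_n)\right),
\]
so the product is strictly positive if and only if the series $\sum_{n=1}^\infty -\log(1-p_n)$ converges (to a finite value). Since all terms are positive, we just need an upper bound.

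The key observation is the following elementary inequality: on $(0,1)$ the function $x \mapsto -\log(1-x)/x$ is monotonically increasing (its derivative can be checked directly, or one can use the Taylor expansion $-\log(1-x) = x + x^2/2 + x^3/3 + \cdots$). Setting $M := \sup_n p_n < 1$, which exists and is strictly less than $1$ by hypothesis, I obtain that for every $n$,
\[
-\log(1-p_n) \leq \left(\frac{-\log(1-M)}{M}\right) p_n =: K \cdot p_n,
\]
where $K$ is a finite positive constant depending only on $M$.

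Summing over $n$ and using the hypothesis $\sum_{n=1}^\infty p_n < \infty$ then yields
\[
0 \leq \sum_{n=1}^\infty -\log(1-p_n) \leq K \sum_{n=1}^\infty p_n < \infty,
\]
and exponentiating gives the desired bound $\prod_{n=1}^\infty (1-p_n) \geq \exp(-K\sum_{n=1}^\infty p_n) > 0$. There is no real obstacle here; the only subtlety worth flagging is why the hypothesis $\max_n p_n < 1$ is needed rather than just $p_n < 1$ for each $n$. Without uniform separation of $p_n$ from $1$, the local linear bound on $-\log(1-x)$ degenerates and one could have $\prod(1-p_n) = 0$ even with $\sum p_n$ convergent (for instance, a single factor being zero). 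The uniform bound $\sup_n p_n < 1$ rules this out and makes the one-line comparison above uniform in $n$.
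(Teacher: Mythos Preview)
Your proof is correct and follows essentially the same strategy as the paper: both take logarithms and bound $-\log(1-p_n)$ by a constant (depending on $\max_n p_n$) times $p_n$, then sum. The only cosmetic difference is that the paper obtains its linear bound via an explicit Taylor-series estimate $-\log(1-x)\le x\bigl(1+\tfrac{1}{2(1-x)}\bigr)$, whereas you use monotonicity of $-\log(1-x)/x$; one minor side remark is that your closing comment slightly overstates the role of the hypothesis $\max_n p_n<1$, since $\sum p_n<\infty$ already forces $p_n\to 0$ and hence $\sup_n p_n<1$ automatically.
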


\begin{proof}
	Using the Taylor expansion  of $\log(1-x)$, we see that
$$\log(1-x)=-\sum_{k=1}^\infty\frac{x^k}{k}\geq-x-\sum_{k=2}^\infty \frac{x^k}{2}=-x-\frac{x^2}{2(1-x)}\geq-x\left(1+\frac{1}{2(1-x)}\right)$$
for every $0<x<1$. Therefore,
	\[
	\prod_{n=1}^\infty(1-p_n)=\prod_{n=1}^\infty e^{\log(1-p_n)}\geq\prod_{n=1}^\infty e^{-p_n\left(1+\frac{1}{2(1-\max_np_n)}\right)}= e^{-\left(1+\frac{1}{2(1-\max_np_n)}\right)\sum_{n=1}^\infty p_n}>0. \qedhere
	\]
\end{proof}

\begin{lemma}\label{lem:boundprodomega}
	There exists a constant $Q>0$ such that for every $\A\in\Xi$,
	$$
	\prod_{j=1}^{\infty}\PP_{A_j+\rho(A_j),A_{j+1}}(\Omega_{A_j+\rho(A_j),A_{j+1}})\geq Q.
	$$
%$$
%\PP([\y|_1^{A_1-1}])\cdot\prod_{j=1}^{\infty}\PP_{A_j+\rho(A_j),A_{j+1}}(\Omega_{A_j+\rho(A_j),A_{j+1}})\geq Q.
%$$
\end{lemma}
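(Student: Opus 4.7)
The plan is to apply Lemma~\ref{lem:boundomega} termwise to each factor, bound the exponent in the resulting geometric-type series by a quantity depending only on the fixed sequence $(n_j)$ (not on the specific $\A \in \Xi$), and then invoke Lemma~\ref{lem:prod} to conclude positivity of the infinite product with a constant $Q$ that is uniform over $\A$.

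The first step is to set $p_j := A_j + \rho(A_j)$ and $q_j := A_{j+1}$ and verify the hypotheses of Lemma~\ref{lem:boundomega}. The bound $p_j > n_1$ is immediate from $A_j \geq n_j \geq n_1$. The inequality $q_j > p_j + \rho(p_j) + 2$ follows from Lemma~\ref{lem:existseq}(2), since $A_{j+1} \geq n_{j+1} > m_j + \rho(m_j) + 2 \geq A_j + \rho(A_j) + 2 = p_j + 2$ together with the monotonicity of $\rho$; the inequality $q_j > (p_j + 2)^2$ similarly follows from $n_{j+1} > (2m_j + \rho(m_j))^2 \geq (p_j+2)^2$.

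The second (and central) step is to estimate the minimum appearing in the exponent of Lemma~\ref{lem:boundomega}. I will show that for every $\A \in \Xi$ and every $j \geq 1$,
$$\min\!\left\{\sqrt{p_j},\ \sqrt{q_j} - p_j,\ \frac{\alpha p_j}{-2\log a_{\min}}\right\} \;\geq\; \sqrt{n_j}.$$
The first term satisfies $\sqrt{p_j} \geq \sqrt{n_j}$ because $p_j \geq A_j \geq n_j$. For the third term, condition (ii) of Lemma~\ref{lem:existseq} implies $n_j \geq n_1 > \left(\frac{-2\log a_{\min}}{\alpha}\right)^2$, so $\frac{\alpha n_j}{-2\log a_{\min}} \geq \sqrt{n_j}$. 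For the middle term, condition (2) of Lemma~\ref{lem:existseq} gives $q_j \geq n_{j+1} > (2m_j + \rho(m_j))^2 \geq (2 p_j)^2$, hence $\sqrt{q_j} - p_j \geq p_j \geq n_j \geq \sqrt{n_j}$. Writing $c := a_{\max}^s e^{-\alpha s} \in (0,1)$, Lemma~\ref{lem:boundomega} then yields
$$\PP_{p_j, q_j}(\Omega_{p_j, q_j}) \;\geq\; 1 - C' c^{\sqrt{n_j}}.$$

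The final step applies Lemma~\ref{lem:prod} to the sequence $p_j^{\ast} := C' c^{\sqrt{n_j}}$. Summability $\sum_j c^{\sqrt{n_j}} < \infty$ follows from the super-quadratic growth $n_{j+1} > 4 n_j^2$ forced by Lemma~\ref{lem:existseq}(2), which gives $\sqrt{n_{j+1}} > 2\sqrt{n_j}$ and hence doubly-exponential growth of $\sqrt{n_j}$. The condition $\max_j p_j^{\ast} < 1$ reduces, by monotonicity of $n_j$, to $C' c^{\sqrt{n_1}} < 1$, which is supplied by condition (iv) of Lemma~\ref{lem:existseq} (absorbing the constant $C'$ into $C$). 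The conclusion is
$$\prod_{j=1}^\infty \PP_{p_j, q_j}(\Omega_{p_j, q_j}) \;\geq\; \prod_{j=1}^\infty (1 - C' c^{\sqrt{n_j}}) \;=:\; Q > 0,$$
and since $Q$ depends only on the fixed data $(n_j)$, $c$, $C'$, it is uniform over $\A \in \Xi$. There is no substantive obstacle here: the conditions imposed on $n_1$ in Lemma~\ref{lem:existseq}(i)--(iv) and the growth condition (2) were tailored precisely to make each of the three ingredients above work; the only real bookkeeping is tracking which condition supplies which bound.
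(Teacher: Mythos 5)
Your proof follows essentially the same route as the paper's: apply Lemma~\ref{lem:boundomega} to each factor with $p_j=A_j+\rho(A_j)$ and $q_j=A_{j+1}$, bound the exponent from below uniformly in $\A$ by a quantity depending only on $n_j$, and finish with Lemma~\ref{lem:prod}. Two of your three estimates for the minimum are fine, but the middle one rests on a false inequality: you claim $(2m_j+\rho(m_j))^2\geq(2p_j)^2$, i.e.\ $2m_j+\rho(m_j)\geq 2A_j+2\rho(A_j)$. Taking $A_j=m_j$ (which is allowed, since $A_j$ ranges over $[n_j,m_j]$) this reads $\rho(m_j)\geq 2\rho(m_j)$, which fails. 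Consequently your intermediate claim $\sqrt{q_j}-p_j\geq p_j$ is not justified by Lemma~\ref{lem:existseq}\eqref{it:2}. The bound you actually need is nevertheless true and is obtained in the paper by a slightly different chain that avoids squaring $p_j$: $\sqrt{q_j}-p_j\geq\sqrt{n_{j+1}}-(m_j+\rho(m_j))>(2m_j+\rho(m_j))-(m_j+\rho(m_j))=m_j\geq n_j\geq\sqrt{n_j}$, using $n_{j+1}>(2m_j+\rho(m_j))^2$ directly.

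Two smaller points. First, because you lower-bound the minimum by $\sqrt{n_j}$ rather than by the quantity $\min\{1,\frac{\alpha}{-2\log a_{\min}}\}\sqrt{(1+\frac{\alpha}{-2\log a_{\min}})n_j}$ that appears verbatim in Lemma~\ref{lem:existseq}\eqref{it:b4}, your appeal to \eqref{it:b4} for $C'c^{\sqrt{n_1}}<1$ does not quite go through when $\frac{\alpha}{-2\log a_{\min}}\geq1$: in that case the exponent in \eqref{it:b4} exceeds $\sqrt{n_1}$, and since $c<1$ the implication runs the wrong way. Matching the paper's exponent (or treating the first factor separately) removes this. Second, your verification that $q_j>p_j+\rho(p_j)+2$ only establishes $q_j>p_j+2$; the extra $\rho(p_j)$ is not controlled by ``monotonicity of $\rho$'' alone, though the paper itself leaves this hypothesis implicit. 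Your summability argument via the doubly exponential growth of $\sqrt{n_j}$ is a harmless variant of the paper's comparison with $\sum_{n\geq n_1}c^{\sqrt n}$.
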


\begin{proof}
%Observing that $a_{\min}^s e^{-\alpha s} < 1$, it follows from the definitions of $\PP$ and $\Xi$ combined with \eqref{f' composition bounds} that
%	\begin{equation}\label{eq:tob2}
%	\PP([\y|_1^{A_1-1}])\geq C^{-1}(a_{\min}^se^{-\alpha s})^{A_1-1}\geq C^{-1}(a_{\min}^se^{-\alpha s})^{m_1}>0.
%	\end{equation}
	By Lemma~\ref{lem:existseq}\eqref{it:1} and Lemma~\ref{lem:existseq}\eqref{it:2},
\[\sqrt{n_{j+1}}-m_j-\rho(m_j)\geq m_j\geq n_j+\rho(n_j)\geq\sqrt{n_j+\rho(n_j)}\geq\sqrt{\left(1+\dfrac{\alpha}{-2\log a_{\min}}\right)n_j}.\]
Since $A_j\in[n_j,m_j]$,
	\begin{align*}
	\min&\left\{\sqrt{A_j+\rho(A_j)},\sqrt{A_{j+1}}-A_j-\rho(A_j),\frac{\alpha (A_j+\rho(A_j))}{-2\log a_{\min}}\right\}\\
	&\geq\min\left\{\sqrt{n_j+\rho(n_j)},\sqrt{n_{j+1}}-m_j-\rho(m_j),\frac{\alpha (n_j+\rho(n_j))}{-2\log a_{\min}}\right\}\\
	&\geq\min\left\{\sqrt{\left(1+\dfrac{\alpha}{-2\log a_{\min}}\right)n_j},\frac{\alpha}{-2\log a_{\min}}\left(1+\dfrac{\alpha}{-2\log a_{\min}}\right)n_j\right\} \\
	&\geq\min\left\{1,\frac{\alpha}{-2\log a_{\min}}\right\}\sqrt{\left(1+\dfrac{\alpha}{-2\log a_{\min}}\right)n_j}.
	\end{align*}
	Hence, by Lemma~\ref{lem:boundomega},
\begin{equation}\label{eq:tob1}
		\PP_{A_j+\rho(A_j),A_{j+1}}(\Omega_{A_j+\rho(A_j),A_{j+1}})\geq 1-C\left(a_{\max}^{s}e^{-\alpha s}\right)^{\min\left\{1,\frac{\alpha}{-2\log a_{\min}}\right\}\sqrt{\left(1+\frac{\alpha}{-2\log a_{\min}}\right)n_j}}. \\[3ex]
	\end{equation}
%Combining \eqref{eq:tob2} and \eqref{eq:tob1}, we have
By \eqref{eq:tob1}, we have
	\[
	\begin{split}
	\prod_{j=1}^{\infty}\PP_{A_j+\rho(A_j),A_{j+1}}(\Omega_{A_j+\rho(A_j),A_{j+1}})&\geq \prod_{j=1}^\infty\left(1-C\left(a_{\max}^{s}e^{-\alpha s}\right)^{\sqrt{n_j}\min\left\{1,\frac{\alpha}{-2\log a_{\min}}\right\}\sqrt{1+\frac{\alpha}{-2\log a_{\min}}}}\right)\\
	&\geq \prod_{n=n_1}^\infty\left(1-C\left(a_{\max}^{s}e^{-\alpha s}\right)^{\sqrt{n}\min\{1,\frac{\alpha}{-2\log a_{\min}}\}\sqrt{1+\frac{\alpha}{-2\log a_{\min}}}}\right),
	\end{split}
	\]
%	\[
%\begin{split}
%	\PP([\y|_1^{A_1-1}])\cdot&\prod_{j=1}^{\infty}\PP_{A_j+\rho(A_j),A_{j+1}}(\Omega_{A_j+\rho(A_j),A_{j+1}})\\
%	&\geq C^{-1}(a_{\min}^se^{-\alpha s})^{m_1}\prod_{j=1}^\infty\left(1-C\left(a_{\max}^{s}e^{-\alpha s}\right)^{\sqrt{n_j}\min\left\{1,\frac{\alpha}{-2\log a_{\min}}\right\}\sqrt{1+\frac{\alpha}{-2\log a_{\min}}}}\right)\\
%	&\geq C^{-1}(a_{\min}^se^{-\alpha s})^{m_1}\prod_{n=n_1}^\infty\left(1-C\left(a_{\max}^{s}e^{-\alpha s}\right)^{\sqrt{n}\min\{1,\frac{\alpha}{-2\log a_{\min}}\}\sqrt{1+\frac{\alpha}{-2\log a_{\min}}}}\right),
%\end{split}
%\]
	which is a positive constant by Lemma~\ref{lem:existseq}\eqref{it:b4} and Lemma~\ref{lem:prod}.
\end{proof}

Let
$$
\mathbb{S}_{p,q}:=\frac{\mathbb{P}_{p,q}|_{\Omega_{p,q}}}{\PP_{p,q}(\Omega_{p,q})}.
$$

For a sequence $\A\in\Xi$, let $\mathbb{Q}_{k}$ be the probability measure on $\Lambda^{\rho(A_k)+2}$ such that
$$
\mathbb{Q}_k(\ii)=\begin{cases}
1 & \text{if }\ii=\omega_k(A_k)\x|_1^{\rho(A_k)}\tau_k(A_k),\\
0 & \text{otherwise;}
\end{cases}
$$
and let $\delta$ be the probability measure on $\Lambda^{A_1-1}$ such that
$$
\delta(\ii)=\begin{cases}
	1 & \text{if }\ii=\y|_1^{A_1-1},\\
	0 & \text{otherwise.}
\end{cases}
$$
For each $\A\in\Xi$, we define a probability measure $\mu_{\A}$ as follows:
\begin{equation}\label{eq:muA}
\mu_\A=\delta\times\prod_{k=1}^{\infty}\mathbb{Q}_k\times\Sm_{A_k+\rho(A_k),A_{k+1}}.
\end{equation}
%\begin{equation}\label{eq:muA}
%	\mu_\A=\Sm_{1,A_1+1}\times\prod_{k=1}^{\infty}\mathbb{Q}_k\times\Sm_{A_k+\rho(A_k),A_{k+1}}.
%\end{equation}

It follows from Lemma~\ref{lem:boundprodomega} that $\mu_\A$ is a well-defined probability measure on $\Sigma$ with respect to the $\sigma$-algebra generated by the cylinder sets, since Lemma~\ref{lem:boundprodomega} guarantees that the normalising factor in the definition of $\mu_\A$ is non-zero. Moreover, by construction, $\mu_\A(\Gamma_\A)=1$, where $\Gamma_\A$ is the set defined in Lemma~\ref{lem:gamset}.

Recall that we defined $\varepsilon(n):=e^{\alpha s n}\|f_{\x|_1^{\rho(n)}}'(\xi)\|^{s}$ and assumed that $\sum_{k=1}^\infty\varepsilon(n)$ is a divergent series. Finally, we define the probability measure $\nu$ on $\Xi$ as
\begin{equation}\label{eq:nu}
\nu([A_1,\ldots,A_\ell])=\prod_{j=1}^\ell\frac{\varepsilon(A_j)}{\sum_{k=n_j}^{m_j}\varepsilon(k)}.
\end{equation}

By its construction, the measure $\eta:=\int\mu_\A d\nu(\A)$ is a well-defined probability measure on $\Sigma$ with respect to the $\sigma$-algebra generated by the cylinder sets. We conclude this section by observing that $\eta(\widehat{W}(\x,\rho))=1$. In fact, we actually have the stronger statement that $\eta\left(\bigcup_{\A\in\Xi}\Gamma_\A\right)=1$ since
\[\int_{\Xi}\mu_\A(\bigcup_{\A\in\Xi}\Gamma_\A) d\nu(\A)=\int_{\Xi}\mu_\A(\Gamma_\A) d\nu(\A)=\int_{\Xi}1d\nu(\A)=1.\]
The conclusion that $\eta(\widehat{W}(\x,\rho))=1$ follows from the fact that $\eta\left(\bigcup_{\A\in\Xi}\Gamma_\A\right)=1$ upon recalling that $\Gamma_{\A} \subseteq C_{\A} \subseteq \widehat{W}(\x,\rho)$ for every $\A \in \Xi$.

\section{Proof of Proposition~\ref{prop:massdist}} \label{end of proof section}

Before we turn to the proof, we give estimates for $\mu_\A([\ii|_1^k])$.

\begin{lemma}\label{lem:meas1}
	For every $\A\in\Xi$ and every $\ii\in\Gamma_\A$,
	$$
	\mu_\A([\ii|_1^k])\ll\begin{cases}
	\|f_{\ii|_1^k}'(\xi)\|^s\dfrac{C^{(1+s)\ell}a_{\min}^{-2\ell s}e^{-s\alpha(A_{\ell}-\sum_{j=1}^{\ell-1}\rho(A_j)-2\ell)}}{\|f_{\x|_1^{k-A_\ell}}'(x)\|^{s}\prod_{j=1}^{\ell-1}\|f_{\x|_1^{\rho(A_j)}}'(x)\|^{s}} & \text{ if }A_\ell\leq k<A_\ell+\rho(A_\ell), \\
	\|f_{\ii|_1^k}'(\xi)\|^s\dfrac{C^{(1+s)\ell}a_{\min}^{-2\ell s}e^{-s\alpha(k-\sum_{j=1}^{\ell}\rho(A_j)-2\ell)}}{\prod_{j=1}^{\ell}\|f_{\x|_1^{\rho(A_j)}}'(x)\|^{s}} & \text{ if }A_\ell+\rho(A_\ell)\leq k<A_{\ell+1},
	\end{cases}
	$$
where $C \geq 1$ is the constant taken here to be the maximum of the constants appearing in \eqref{eq:boundeddist} and \eqref{eq:measP}.
\end{lemma}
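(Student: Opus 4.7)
My plan is to unfold the product structure of $\mu_\A$ from \eqref{eq:muA} on the cylinder $[\ii|_1^k]$ and then apply \eqref{eq:measP}, the chain rule, and the bounded distortion property \eqref{eq:boundeddist}. Since $\ii\in\Gamma_\A$, the factor $\delta$ and every $\mathbb{Q}_j$ that touches a coordinate $\le k$ are point masses concentrated on the actual coordinates of $\ii$, so they contribute a factor of $1$. Hence only the factors $\Sm_{A_j+\rho(A_j),A_{j+1}}$ contribute nontrivially, giving
$$
\mu_\A([\ii|_1^k])\;=\;\prod_{j=1}^{\ell-1}\Sm_{A_j+\rho(A_j),A_{j+1}}\!\left([\ii|_{A_j+\rho(A_j)+2}^{A_{j+1}-1}]\right)
$$
in Case 1, and this same product multiplied by one trailing factor $\Sm_{A_\ell+\rho(A_\ell),A_{\ell+1}}([\ii|_{A_\ell+\rho(A_\ell)+2}^k])$ in Case 2 (interpreted as $1$ when $k\le A_\ell+\rho(A_\ell)+1$).

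Next I would use Lemma~\ref{lem:boundprodomega} to bound the product of normalising denominators of the $\Sm$-factors uniformly below by $Q>0$, and then apply \eqref{eq:measP} to each numerator; this yields factors of the form $C\,e^{-\alpha sL_j}\|f'_{\ii|_{A_j+\rho(A_j)+2}^{A_{j+1}-1}}(\xi)\|^s$, where $L_j$ is the relevant block length. Summing the $L_j$ over $j$ produces, in Case 2, the exponent $-\alpha s[k-A_1-\sum_{j=1}^\ell\rho(A_j)-2\ell+1]$, which agrees with the target exponent $-\alpha s[k-\sum_{j=1}^\ell\rho(A_j)-2\ell]$ up to the bounded factor $e^{\alpha s(A_1-1)}\le e^{\alpha s(m_1-1)}$; since $A_1\le m_1$ for every $\A\in\Xi$, this is absorbed into the implicit constant in $\ll$. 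Case 1 is handled identically with $A_\ell$ playing the role of $k$.

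To convert the product of $\|f'_{\Sm\text{-block}}(\xi)\|^s$ into an upper bound involving $\|f'_{\ii|_1^k}(\xi)\|^s$ and the denominators appearing in the statement, I decompose $\ii|_1^k$ into $O(\ell)$ pieces: the prefix $\ii|_1^{A_1-1}=\y|_1^{A_1-1}$, each $\mathbb{Q}_j$-block $\omega_j\,\x|_1^{\rho(A_j)}\,\tau_j$ (or its partial version $\omega_\ell\x|_1^{k-A_\ell}$ in Case 1), and the $\Sm$-blocks. Iterated application of \eqref{eq:boundeddist} together with the chain rule gives
$$
\prod_j \|f'_{\Sm\text{-block}_j}(\xi)\|^s \;\le\; C^{O(\ell)}\cdot\frac{\|f'_{\ii|_1^k}(\xi)\|^s}{\|f'_{\ii|_1^{A_1-1}}(\xi)\|^s\,\prod_j\|f'_{\omega_j}(\xi)\|^s\|f'_{\x|_1^{\rho(A_j)}}(\xi)\|^s\|f'_{\tau_j}(\xi)\|^s}.
$$
The single-symbol factors $\|f'_{\omega_j}(\xi)\|,\|f'_{\tau_j}(\xi)\|\ge a_{\min}$ produce the factor $a_{\min}^{-2\ell s}$ in the bound; a further use of \eqref{eq:boundeddist} converts each $\|f'_{\x|_1^{\rho(A_j)}}(\xi)\|^s$ into $\|f'_{\x|_1^{\rho(A_j)}}(x)\|^s$ at a cost of $C^s$ per block; and the remaining factor $\|f'_{\ii|_1^{A_1-1}}(\xi)\|^{-s}\le a_{\min}^{-(A_1-1)s}\le a_{\min}^{-(m_1-1)s}$ is absorbed into the implicit constant. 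Combining these estimates yields both cases of the lemma.

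The main technical obstacle is the combinatorial bookkeeping: one must count the bounded-distortion gluings carefully to match the claimed exponent $C^{(1+s)\ell}$ (rather than an arbitrary $C^{O(\ell)}$), keep exact track of the $e^{-\alpha s}$ exponent across the boundaries of partial blocks, and verify that every $A_1$-dependent quantity appearing during the estimate is uniformly bounded on $\Xi$ so that it can genuinely be absorbed into the Vinogradov constant $\ll$.
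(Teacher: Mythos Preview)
Your proposal is correct and follows essentially the same approach as the paper's own proof: factor $\mu_\A$ into its $\Sm$-components, bound the normalisers via Lemma~\ref{lem:boundprodomega}, apply \eqref{eq:measP} to each block, and reassemble the derivative factors into $\|f'_{\ii|_1^k}(\xi)\|^s$ via the chain rule and bounded distortion, with the single symbols $\omega_j,\tau_j$ producing the $a_{\min}^{-2\ell s}$. The only cosmetic difference is in how the $A_1$-dependent prefix is handled: the paper artificially inserts the factor $\|f'_{\ii|_1^{A_1-1}}(\xi_0)\|^s e^{-\alpha s(A_1-1)}$ at the outset (dividing by $(a_{\min}^s e^{-\alpha s})^{m_1}$ to justify this) so that the chain-rule telescoping yields $\|f'_{\ii|_1^k}\|^s$ directly, whereas you absorb the prefix norm and the $e^{\alpha s(A_1-1)}$ discrepancy separately into the $\ll$ constant at the end; both are equivalent since $A_1\le m_1$ uniformly on $\Xi$.
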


Note that it follows from Lemma \ref{lem:existseq}(1) and \ref{lem:existseq}(2) that $A_{\ell}-\sum_{j=1}^{\ell-1}{\rho(A_j)}-2\ell>0$ and so the exponent of $e$ is negative.

\begin{proof}
	First suppose that $A_\ell\leq k<A_\ell+\rho(A_\ell)$. Then $\ii|_{A_{\ell}}^k=\omega_\ell(A_\ell)\x|_1^{k-A_\ell}$ (see proof of Lemma \ref{lem:gamset}) and thus,
	\[
	\mu_\A([\ii|_1^k])=\prod_{j=1}^{\ell-1}\Sm_{A_j+\rho(A_j),A_{j+1}}([\ii|_{A_j+\rho(A_j)+2}^{A_{j+1}-1}]).
	\]
	By Lemma~\ref{lem:boundprodomega} we have $\prod_{j=1}^{\ell-1}\PP_{A_j+\rho(A_j),A_{j+1}}(\Omega_{A_j+\rho(A_j),A_{j+1}})\geq Q>0$, and hence
	$$
	\mu_\A([\ii|_1^k])\leq Q^{-1}\prod_{j=1}^{\ell-1}\PP_{A_j+\rho(A_j),A_{j+1}}([\ii|_{A_j+\rho(A_j)+2}^{A_{j+1}-1}]).
	$$
	By \eqref{eq:measP} and since $A_1\leq m_1$ and \eqref{f' bounds},
	\[
	\mu_\A([\ii|_1^k])\leq \frac{C^{\ell}\|f_{\ii|_1^{A_1-1}}'(\xi_0)\|^se^{-\alpha s (A_1-1)}}{Q(a_{\min}^se^{-\alpha s})^{m_1}}\prod_{j=1}^{\ell-1}\|f_{\ii|_{A_j+\rho(A_j)+2}^{A_{j+1}-1}}'(\xi_j)\|^s e^{-\alpha s(A_{j+1}-A_j-\rho(A_j)-2)},
	\]
where $\xi_0,\cdots,\xi_{\ell-1}\in X$ are arbitrary. So one can choose $\xi_{j}=f_{\ii|^k_{A_j-1}}'(x)$, for $j=0\ldots,\ell-1$ and thus, by the chain rule and \eqref{eq:boundeddist}, we have
\begin{align*}
\|f_{\ii|_1^{A_1-1}}'(\xi_0)\|&\prod_{j=1}^{\ell-1}\|f_{\ii|_{A_j+\rho(A_j)+2}^{A_{j+1}-2}}'(\xi_j)\|\\
&=\|f_{\ii|_1^k}'(x)\|\|f_{\omega_\ell(A_\ell)\x|_1^{k-A_\ell}}'(x)\|^{-1}\prod_{j=1}^{\ell-1}\|f_{\omega_j(A_j)\x|_1^{\rho(A_j)}\tau_j(A_j)}'(f_{\ii|_{A_j+\rho(A_j)+2}^k}(x))\|^{-1}\\
&\leq \|f_{\ii|_1^k}'(x)\|C^{\ell-1}a_{\min}^{-(2\ell-1)}\|f_{\x|_1^{k-A_\ell}}'(x)\|^{-1}\prod_{j=1}^{\ell-1}\|f_{\x|_1^{\rho(A_j)}}'(x)\|^{-1},
\end{align*}
where in the last inequality we used \eqref{eq:boundeddist} and \eqref{f' composition bounds}. Thus,

\[
\mu_\A([\ii|_1^k])\ll \|f_{\ii|_1^k}'(\xi)\|^s\cdot C^{(1+s)\ell}a_{\min}^{-2\ell s} e^{-s\alpha(A_{\ell}-\sum_{j=1}^{\ell-1}\rho(A_j)-2\ell)}\|f_{\x|_1^{k-A_\ell}}'(x)\|^{-s}\prod_{j=1}^{\ell-1}\|f_{\x|_1^{\rho(A_j)}}'(x)\|^{-s}.
\]
	Now, consider the case when $A_\ell+\rho(A_\ell)\leq k<A_{\ell+1}$. Then, similarly to the previous argument, we have
	\[
	\begin{split}
	\mu_\A([\ii|_1^k])&=\left(\prod_{j=1}^{\ell-1}\Sm_{A_j+\rho(A_j),A_{j+1}}([\ii|_{A_j+\rho(A_j)+2}^{A_{j+1}-1}])\right)\cdot\Sm_{A_\ell+\rho(A_\ell),A_{\ell+1}}([\ii|_{A_\ell+\rho(A_\ell)+2}^k])\\
	&\ll\|f_{\ii|_1^k}'(\xi)\|^s\cdot C^{(1+s)\ell}a_{\min}^{-2s\ell} e^{-s\alpha(k-\sum_{j=1}^{\ell}\rho(A_j)-2\ell)}\prod_{j=1}^{\ell}\|f_{\x|_1^{\rho(A_j)}}'(x)\|^{-s}.
	\end{split}
	\]
\end{proof}

\begin{lemma}\label{lem:upperbound}
	Let $\A\in\Xi$ and $\ii\in\Gamma_\A$ be arbitrary. Then, for every $k\geq n_1$,
	$$
	\mu_\A([\ii|_1^k])\ll\begin{cases}
	\|f_{\ii|_1^k}'(\xi)\|^s\dfrac{C^{(1+s)\ell}a_{\min}^{-2\ell s}e^{-s\alpha(A_\ell-\sum_{j=1}^{\ell-1}\rho(A_j)-2\ell)}}{\prod_{j=1}^{\ell}\|f_{\x|_1^{\rho(A_j)}}'(x)\|^{s}} & \text{ if }A_\ell\leq k<n_{\ell+1} \\
	\|f_{\ii|_1^k}'(\xi)\|^s\dfrac{C^{(1+s)\ell}a_{\min}^{-2\ell s}e^{-s\alpha(n_{\ell+1}-\sum_{j=1}^{\ell}\rho(A_j)-2\ell)}}{\prod_{j=1}^{\ell}\|f_{\x|_1^{\rho(A_j)}}'(x)\|^{s}} & \text{ if }n_{\ell+1}\leq k<A_{\ell+1},
	\end{cases}
	$$
where $C \geq 1$ is as in Lemma \ref{lem:meas1}.
\end{lemma}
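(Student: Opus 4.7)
The plan is to derive Lemma~\ref{lem:upperbound} directly from Lemma~\ref{lem:meas1} by a careful case split of the interval $A_\ell \le k < A_{\ell+1}$ into the three subintervals
\[
\text{(a) } A_\ell \le k < A_\ell + \rho(A_\ell),\quad
\text{(b) } A_\ell + \rho(A_\ell) \le k < n_{\ell+1},\quad
\text{(c) } n_{\ell+1} \le k < A_{\ell+1}.
\]
Since $A_\ell \le m_\ell$, Lemma~\ref{lem:existseq}(1) and (2) give $A_\ell + \rho(A_\ell) \le m_\ell + \rho(m_\ell) < n_{\ell+1}$, so the subintervals are well-defined and cover the whole range. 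The case $k \ge n_1$ guarantees that $k$ falls into one of them for some $\ell \ge 1$.

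In (a), I would invoke Case~1 of Lemma~\ref{lem:meas1}. The factor $e^{-s\alpha(A_\ell - \sum_{j=1}^{\ell-1}\rho(A_j) - 2\ell)}$ already matches the target bound, so the only discrepancy is between $\|f'_{\x|_1^{k-A_\ell}}(x)\|^{-s}$ and $\|f'_{\x|_1^{\rho(A_\ell)}}(x)\|^{-s}$. Writing $f_{\x|_1^{\rho(A_\ell)}} = f_{\x|_1^{k-A_\ell}} \circ f_{\x|_{k-A_\ell+1}^{\rho(A_\ell)}}$, applying the chain rule together with conformality, the bounded distortion property \eqref{eq:boundeddist}, and the observation that
\[
\|f'_{\x|_{k-A_\ell+1}^{\rho(A_\ell)}}(x)\| \le a_{\max}^{\rho(A_\ell)-(k-A_\ell)} \le 1
\]
(since $a_{\max} < 1$ and $k - A_\ell \le \rho(A_\ell)$), yields
\[
\|f'_{\x|_1^{\rho(A_\ell)}}(x)\| \le C \|f'_{\x|_1^{k-A_\ell}}(x)\|.
\]
Raising this to the power $-s$ shows the Case~1 bound from Lemma~\ref{lem:meas1} is at most $C^s$ times the target bound from Case~1 of Lemma~\ref{lem:upperbound}, which can be absorbed into $\ll$.

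In (b) and (c), I would invoke Case~2 of Lemma~\ref{lem:meas1}. The denominator is already $\prod_{j=1}^{\ell}\|f'_{\x|_1^{\rho(A_j)}}(x)\|^s$, so only the exponential factor needs to be compared. In (b), the exponents differ by
\[
\bigl(k - \textstyle\sum_{j=1}^{\ell}\rho(A_j) - 2\ell\bigr) - \bigl(A_\ell - \textstyle\sum_{j=1}^{\ell-1}\rho(A_j) - 2\ell\bigr) = k - A_\ell - \rho(A_\ell) \ge 0,
\]
so $e^{-s\alpha(k - \sum_j \rho(A_j) - 2\ell)} \le e^{-s\alpha(A_\ell - \sum_{j<\ell}\rho(A_j) - 2\ell)}$, which is exactly the target. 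In (c), the difference is $k - n_{\ell+1} \ge 0$, giving the target bound for Case~2 of Lemma~\ref{lem:upperbound}.

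The argument is essentially careful bookkeeping; the only slightly delicate point is the derivative comparison in (a). It is important there that conformality and bounded distortion give a clean multiplicative comparison instead of the crude $a_{\min}$--$a_{\max}$ estimates, which would otherwise introduce an $a_{\min}^{-\rho(A_\ell)s}$ factor that the target bound could not absorb.
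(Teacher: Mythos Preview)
Your proposal is correct and follows essentially the same approach as the paper: the same three-way split (a), (b), (c) of $[A_\ell,A_{\ell+1})$ and the same appeals to the two cases of Lemma~\ref{lem:meas1}, with the exponential comparisons in (b) and (c) handled exactly as you describe. In fact your case~(a) is slightly more explicit than the paper's, which simply records the bound with $\prod_{j=1}^{\ell}\|f_{\x|_1^{\rho(A_j)}}'(x)\|^{-s}$ in the denominator without spelling out the chain-rule/bounded-distortion step replacing $\|f_{\x|_1^{k-A_\ell}}'(x)\|^{-s}$ by $\|f_{\x|_1^{\rho(A_\ell)}}'(x)\|^{-s}$.
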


\begin{proof}
	Let $\ell\geq1$ be such that $A_\ell\leq k<A_{\ell+1}$. If $A_\ell\leq k<n_{\ell+1}$ then either $k< A_\ell+\rho(A_\ell)$ or $k\geq A_\ell+\rho(A_\ell)$. If $A_\ell\leq k<A_\ell+\rho(A_\ell)$ then, by Lemma~\ref{lem:meas1},
	\[
	\mu_\A([\ii|_1^k])\ll 	\|f_{\ii|_1^k}'(\xi)\|^s C^{(1+s)\ell}a_{\min}^{-2\ell s}e^{-s\alpha(A_{\ell}-\sum_{j=1}^{\ell-1}\rho(A_j)-2\ell)}\cdot\prod_{j=1}^{\ell}\|f_{\x|_1^{\rho(A_j)}}'(x)\|^{-s}.
	\]
	
	If $A_\ell+\rho(A_\ell)\leq k<n_{\ell+1}$ then, by Lemma~\ref{lem:meas1} again,
\[
\begin{split}
	\mu_\A([\ii|_1^k])&\ll \|f_{\ii|_1^k}'(\xi)\|^s\dfrac{C^{(1+s) \ell}a_{\min}^{-2 \ell s}e^{-s\alpha(k-\sum_{j=1}^{\ell}\rho(A_j)-2\ell)}}{\prod_{j=1}^{\ell}\|f_{\x|_1^{\rho(A_j)}}'(x)\|^{s}}\\
	&\leq \|f_{\ii|_1^k}'(\xi)\|^s\dfrac{C^{(1+s)\ell}a_{\min}^{-2\ell s}e^{-s\alpha(A_\ell-\sum_{j=1}^{\ell-1}\rho(A_j)-2\ell)}}{\prod_{j=1}^{\ell}\|f_{\x|_1^{\rho(A_j)}}'(x)\|^{s}}.
	\end{split}
	\]
	On the other hand, if $n_{\ell+1}\leq k<A_{\ell+1}$ then $k\geq n_{\ell+1}>m_{\ell}+\rho(m_{\ell})\geq A_{\ell}+\rho(A_{\ell})$. Thus, in this case, by Lemma~\ref{lem:meas1},
	\[
	\begin{split}
	\mu_\A([\ii|_1^k])&\ll \|f_{\ii|_1^k}'(\xi)\|^s\dfrac{C^{(1+s)\ell}a_{\min}^{-2\ell s}e^{-s\alpha(k-\sum_{j=1}^{\ell}\rho(A_j)-2\ell)}}{\prod_{j=1}^{\ell}\|f_{\x|_1^{\rho(A_j)}}'(x)\|^{s}}\\
	&\leq \|f_{\ii|_1^k}'(\xi)\|^s\dfrac{C^{(1+s)\ell}a_{\min}^{-2\ell s}e^{-s\alpha(n_{\ell+1}-\sum_{j=1}^{\ell}\rho(A_j)-2\ell)}}{\prod_{j=1}^{\ell}\|f_{\x|_1^{\rho(A_j)}}'(x)\|^{s}}.
	\end{split}
	\]
\end{proof}

Finally, we turn to the proof of our main proposition.

\begin{proof}[Proof of Proposition~\ref{prop:massdist}]
	Let $\delta>0$ be arbitrary but fixed. One can find $L\geq1$ such that all the terms in Lemma~\ref{lem:existseq}\eqref{it:3} and Lemma~\ref{lem:existseq}\eqref{it:4} are smaller than $\delta$ for every $\ell\geq L$. Let us choose $K:=n_L+1$.
	
	Let $k\geq K$ and $\jj\in\Lambda^{k}$ be arbitrary. It can be seen that if $[\jj]\cap\bigcup_{\A\in\Xi}\Gamma_\A=\emptyset$ then $\eta([\jj])=0$ and thus, the bound holds trivially. So, without loss of generality, we may assume that $[\jj]\cap\bigcup_{\A\in\Xi}\Gamma_\A\neq\emptyset$ and pick $\ii\in\bigcup_{\A\in\Xi}\Gamma_\A$ such that $\ii|_1^k=\jj$.

There are two possible cases to consider: either $m_\ell\leq k<n_{\ell+1}$ or $n_\ell\leq k<m_\ell$ for some $\ell\geq1$.

\smallskip
\noindent \underline{\emph{Case 1: $m_\ell\leq k<n_{\ell+1}$}}
\smallskip
	
	If $m_\ell\leq k<n_{\ell+1}$ then by the pairwise disjointness of the sets $\{\Gamma_\A\}_{\A\in\Xi}$ there exists a unique sequence $A_1,\ldots, A_\ell$ such that \mbox{$\sigma^{A_j}\ii\in[\x|_1^{\rho(A_j)}]$} for $j=1,\dots,\ell$, and for every sequence $\A^*\in\Xi$ such that $A_{j}^{*}\neq A_j$ for some $j=1,\ldots,\ell$, we have $[\ii|_1^k]\cap \Gamma_{\A^*}=\emptyset$. Moreover, $A_\ell\leq k<n_{\ell+1}$. So, by Lemma~\ref{lem:upperbound},
	\[\begin{split}
	\eta([\ii|_1^k])&=\int_\Xi \mu_{\A'}([\ii|_1^k])d\nu(\A')=\int_{[A_1,\ldots,A_\ell]}\mu_{\A'}([\ii|_1^k])d\nu(\A')\\
	&\ll\|f_{\ii|_1^k}'(\xi)\|^s\dfrac{C^{(1+s)\ell}a_{\min}^{-2\ell s}e^{-s\alpha(A_\ell-\sum_{j=1}^{\ell-1}\rho(A_j)-2\ell)}}{\prod_{j=1}^{\ell}\|f_{\x|_1^{\rho(A_j)}}'(x)\|^{s}}\nu([A_1,\ldots,A_\ell])\\
&= \|f_{\ii|_1^k}'(\xi)\|^s\dfrac{C^{(1+s)\ell}a_{\min}^{-2\ell s}e^{-s\alpha(A_\ell-\sum_{j=1}^{\ell-1}\rho(A_j)-2\ell)}}{\prod_{j=1}^{\ell}\|f_{\x|_1^{\rho(A_j)}}'(x)\|^{s}}
\frac{\prod_{j=1}^{\ell}\|f_{\x|_1^{\rho(A_j)}}'(x)\|^{s}e^{\alpha s A_j}}{\prod_{j=1}^\ell\sum_{k=n_j}^{m_j}\varepsilon(k)}\\
&= \|f_{\ii|_1^k}'(\xi)\|^s\dfrac{C^{(1+s)\ell}a_{\min}^{-2\ell s}e^{s\alpha(\sum_{j=1}^{\ell-1}(A_j+\rho(A_j))+2\ell)}}{\prod_{j=1}^\ell\sum_{k=n_j}^{m_j}\varepsilon(k)} \\
&\leq\delta\cdot\|f_{\ii|_1^k}'(\xi)\|^s.
	\end{split}\]
The final inequality above follows from Lemma~\ref{lem:existseq}\eqref{it:3} and our choice of $\delta$.

\smallskip
\noindent \underline{\emph{Case 2: $n_\ell\leq k<m_\ell$}}
\smallskip
	
	If $n_\ell\leq k<m_{\ell}$ then again by the pairwise disjointness of the sets $\{\Gamma_\A\}_{\A\in\Xi}$ there exists a unique sequence $A_1,\ldots, A_{\ell-1}$ such that \mbox{$\sigma^{A_j}\ii\in[\x|_1^{\rho(A_j)}]$} for $j=1,\dots,\ell-1$, and for every sequence $\A^{*} \in \Xi$ such that $A_{j}^{*}\neq A_j$ for some $j=1,\ldots,\ell-1$, we have $[\ii|_1^k]\cap \Gamma_{\A^{*}}=\emptyset$. Moreover, there is at most one $n_\ell\leq B\leq R(k)$ such that\linebreak $[\ii|_1^k]\cap\Gamma_{A_1,\ldots,A_{\ell-1},B,\ldots}\neq~\emptyset$ and in that case $[\ii|_1^k]\cap\Gamma_{A_1,\ldots,A_{\ell-1},B',\ldots}=~\emptyset$ for any $B'\neq B$. To see this, note that if $B > R(k)$ then, by definition, $\rho(B)+B >k$ and so it is impossible for $\ii|_{1}^{k}$ to contain $\x|_{1}^{\rho(B)}$ completely. Motivated by this, we decompose $[n_\ell,m_\ell)$ into three parts with respect to $k$:
\begin{itemize} \itemsep=5pt
\item{$[n_\ell,R(k)]$,}
\item{$[R(k)+1,k)$, where $\ii|_{1}^{k}$ may contain part, but not all, of $\x|_{1}^{\rho(B)}$, and}
\item{$[k,m_\ell)$, in which case $\ii|_{1}^{k}$ does not contain any of $\x|_{1}^{\rho(B)}$.}
\end{itemize}

First, suppose that there exists a $B\in[n_\ell,R(k)]$ such that $[\ii|_1^k]\cap\Gamma_{A_1,\ldots,A_{\ell-1},B,\ldots}\neq~\emptyset$. Then, by Lemma~\ref{lem:upperbound},
\[
\begin{split}
\eta([\ii|_1^k])&=\int_{\Xi}\mu_{\A'}([\ii|_1^k])d\nu(\A') \\
&=\int_{[A_1,\ldots,A_{\ell-1},B]}\mu_{\A'}([\ii|_1^k])d\nu(\A')\\
&\ll\|f_{\ii|_1^k}'(\xi)\|^s\dfrac{C^{(1+s)\ell}a_{\min}^{-2\ell s}e^{-s\alpha(B-\sum_{j=1}^{\ell-1}\rho(A_j)-2\ell)}}{\|f_{\x|_1^{\rho(B)}}'(x)\|^{s}\prod_{j=1}^{\ell-1}\|f_{\x|_1^{\rho(A_j)}}'(x)\|^{s}}
\frac{\varepsilon(B)\prod_{j=1}^{\ell-1}\varepsilon(A_j)}{\prod_{j=1}^{\ell}\sum_{k=n_j}^{m_j}\varepsilon(k)}\\
&= \|f_{\ii|_1^k}'(\xi)\|^s\dfrac{C^{(1+s)\ell}a_{\min}^{-2 \ell s}e^{s\alpha(\sum_{j=1}^{\ell-1}(A_j+\rho(A_j))+2\ell)}}{\prod_{j=1}^{\ell}\sum_{k=n_j}^{m_j}\varepsilon(k)} \\
&\leq \delta\cdot\|f_{\ii|_1^k}'(\xi)\|^s,
\end{split}
\]	
where, again, the last inequality follows from Lemma~\ref{lem:existseq}\eqref{it:3} and our choice of $\delta$.

If for every $B\in[n_\ell,R(k)]$ we have $[\ii|_1^k]\cap\Gamma_{A_1,\ldots,A_{\ell-1},B,\ldots}=\emptyset$, then
$$
\eta([\ii|_1^k])=\int_{\Xi}\mu_{\A'}([\ii|_1^k])d\nu(\A')=\sum_{B=R(k)+1}^{m_\ell}\int_{[A_1,\ldots,A_{\ell-1},B]}\mu_\A([\ii|_1^k])d\nu(\A).
$$
First, we give an estimate for the part $B=k,\ldots,m_\ell$. By Lemma~\ref{lem:upperbound} we have
\[
\begin{split}
\sum_{B=k}^{m_\ell}&\int_{[A_1,\ldots,A_{\ell-1},B]}\mu_\A([\ii|_1^k])d\nu(\A)\\
&\ll\sum_{B=k}^{m_\ell}\|f_{\ii|_1^k}'(\xi)\|^s\dfrac{C^{(1+s)\ell}a_{\min}^{-2\ell s}e^{-s\alpha(n_{\ell}-\sum_{j=1}^{\ell-1}\rho(A_j)-2\ell)}}{\prod_{j=1}^{\ell-1}\|f_{\x|_1^{\rho(A_j)}}'(x)\|^{s}}
\frac{\varepsilon(B)\prod_{j=1}^{\ell-1}\varepsilon(A_j)}{\prod_{j=1}^{\ell}\sum_{k=n_j}^{m_j}\varepsilon(k)}\\
&= \sum_{B=k}^{m_\ell}\|f_{\ii|_1^k}'(\xi)\|^s\dfrac{C^{(1+s)\ell}a_{\min}^{-2\ell s}e^{-s\alpha(n_{\ell}-\sum_{j=1}^{\ell-1}(A_j+\rho(A_j))-2\ell)}\varepsilon(B)}{\prod_{j=1}^{\ell}\sum_{k=n_j}^{m_j}\varepsilon(k)}\\
&=\|f_{\ii|_1^k}'(\xi)\|^s\dfrac{C^{(1+s)\ell}a_{\min}^{-2\ell s}e^{-s\alpha(n_{\ell}-\sum_{j=1}^{\ell-1}(A_j+\rho(A_j))-2\ell)}}{\prod_{j=1}^{\ell-1}\sum_{k=n_j}^{m_j}\varepsilon(k)}\frac{\sum_{B=k}^{m_\ell}\varepsilon(B)}{\sum_{k=n_\ell}^{m_\ell}\varepsilon(k)}\\
&\leq\|f_{\ii|_1^k}'(\xi)\|^s\dfrac{C^{(1+s)\ell}a_{\min}^{-2\ell s}e^{-s\alpha(n_{\ell}-\sum_{j=1}^{\ell-1}(A_j+\rho(A_j))-2\ell)}}{\prod_{j=1}^{\ell-1}\sum_{k=n_j}^{m_j}\varepsilon(k)} \\
&\leq \delta \cdot\|f_{\ii|_1^k}'(\xi)\|^s.
\end{split}
\]	
The last inequality above follows from Lemma~\ref{lem:existseq}\eqref{it:4} and our choice of $\delta$.

Finally, to estimate the part corresponding to $B=R(k),\dots,k-1$, note that we have $B < k < B + \rho(B)$. Also note that it can be shown via the chain rule and the bounded distortion property \eqref{eq:boundeddist} that $\|f_{\x|_{1}^{k-B}}(x)\|^{-s} \|f_{\x|_{1}^{\rho(B)}}(x)\|^s = \|f_{\x|_{k-B+1}^{\rho(B)}}(x)\|^s$. Hence, using Lemma~\ref{lem:meas1} directly, we see that
\begin{align*}
\sum_{B=R(k)+1}^{k-1}&\int_{[A_1,\ldots,A_{\ell-1},B]}\mu_\A([\ii|_1^k])d\nu(\A)\\
&\ll\sum_{B=R(k)+1}^{k-1}	\|f_{\ii|_1^k}'(\xi)\|^s\dfrac{C^{(1+s)\ell}a_{\min}^{-2\ell s}e^{-s\alpha(B-\sum_{j=1}^{\ell-1}\rho(A_j)-2\ell)}\cdot\|f_{\x|_1^{k-B}}'(x)\|^{-s}}{\prod_{j=1}^{\ell-1}\|f_{\x|_1^{\rho(A_j)}}'(x)\|^{s}}
\frac{\varepsilon(B)\prod_{j=1}^{\ell-1}\varepsilon(A_j)}{\prod_{j=1}^{\ell}\sum_{k=n_j}^{m_j}\varepsilon(k)}\\
&= \sum_{B=R(k)+1}^{k-1}\|f_{\ii|_1^k}'(\xi)\|^s\dfrac{C^{(1+s)\ell}a_{\min}^{-2\ell s}e^{s\alpha(\sum_{j=1}^{\ell-1}(A_j+\rho(A_j))+2\ell)}\cdot\|f_{\x|_{k-B+1}^{\rho(B)}}'(x)\|^{s}}{\prod_{j=1}^{\ell}\sum_{k=n_j}^{m_j}\varepsilon(k)}.
\end{align*}

Thus, it follows from Lemma~\ref{lem:existseq}\eqref{it:3} and our choice of $\delta$, combined with \eqref{f' composition bounds}, that
\begin{align*}
\sum_{B=R(k)+1}^{k-1}\int_{[A_1,\ldots,A_{\ell-1},B]}\mu_\A([\ii|_1^k])d\nu(\A)
         &\ll\|f_{\ii|_1^k}'(\xi)\|^s\cdot\delta\sum_{B=R(k)+1}^{k-1}(a_{\max})^{s(\rho(B)+B-k)}\\
         &\leq\|f_{\ii|_1^k}'(\xi)\|^s\delta\sum_{B=0}^{\infty}(a_{\max})^{s(\rho(R(k)+B+1)+R(k)+1+B-k)}\\
         &\leq\|f_{\ii|_1^k}'(\xi)\|^s\delta \sum_{B=0}^\infty(a_{\max})^{Bs} \\
         &\ll \delta\|f_{\ii|_i^k}(\xi)\|^s.
\end{align*}

 The proof of Proposition \ref{prop:massdist} is complete upon noting that $\|f_{\ii|_1^k}'(\xi)\|\leq C\diam(X)^{-1}\diam(X_{\ii|_1^k})$ by~\eqref{diameter comparisons}.
\end{proof}

\section{An example: Badly Approximable Numbers and Quadratic Irrationals} \label{Bad and QI application section}

{In this section we discuss an application of our main theorem (Theorem \ref{thm:main}) to the problem of approximating badly approximable numbers by quadratic irrationals. Previously, Baker demonstrated the existence of badly approximable numbers which are ``very well approximable'' by quadratic irrationals \cite{baker}. Here, we extend this result by showing that Theorem \ref{thm:main} can be applied to obtain a Jarn\'{\i}k-type statement for the set of badly approximable numbers which are ``well-approximable'' by a fixed quadratic irrational.} We achieve this by utilising the correspondence between badly approximable numbers and partial quotients of continued fraction expansions, and by expressing the numbers with continued fraction expansions with partial quotients bounded by $M \in \N$ as the attractor of a conformal iterated function system.  

Recall that a number $x\in[0,1]$ is said to be \emph{badly approximable} if there exists a constant $c=c(x)>0$, dependent on $x$, such that for every $\frac{p}{q} \in \Q$ we have
$$
\left|x-\frac{p}{q}\right| > \frac{c}{q^2}.
$$
It is well known that a number is badly approximable if and only if its continued fraction expansion has bounded partial quotients (see, for example, \cite[Theorem 1.15]{BRVaspects}).

For $x \in [0,1]$, let $[a_1,a_2,\dots]$ denote its continued fraction expansion. Note that this expansion will be finite if $x$ is rational. Recall that the $a_i$'s are called the \emph{partial quotients} of $x$ and are the numbers which arise when we write $x$ in the form
\[x=\frac{1}{a_1+\frac{1}{a_2+\frac{1}{a_3+\frac{1}{\phantom{=}\ddots}}}}\]
with $a_i \in \N$ for each $i \in \N$. The partial quotients, $a_i$, can also be defined via the Gauss map. The \emph{Gauss map} is the map $T: [0,1] \to [0,1]$ defined by
\[
T(x)=
\begin{cases}
0 &\text{if } x=0,\\[2ex]
\frac{1}{x} - \left\lfloor \frac{1}{x}\right\rfloor&\text{for } x \in (0,1].
\end{cases}
\]
For each $n \geq 1$, $\displaystyle{a_n=\left\lfloor\frac{1}{T^{n-1}(x)}\right\rfloor}$.

For our present purposes, another useful way for us to view the continued fraction expansion of $x \in [0,1]$ is the following. For every integer $a\geq1$, let
\[f_a(y)=\frac{1}{a+y}.\]
We have
\[ x = [a_1,a_2,\ldots] = \lim_{n\to\infty}f_{a_1}\circ\cdots\circ f_{a_n}(1).\]

If $x\in(0,1]$ is badly approximable, then there exists $Q\geq1$ such that $a_n\leq Q$ for every $n\geq1$. Let us denote the set of badly approximable numbers in $(0,1]$ by $\mathbf{Bad}$, and denote by $\mathbf{Bad}_Q$ the numbers $x\in\mathbf{Bad}$ such that $a_n(x)\leq Q$ for every $n\geq1$ (where $a_n(x)$ is the $n$th partial quotient of $x$). By definition, $\mathbf{Bad}_Q$ is the attractor of the IFS $\{f_a\circ f_b\}_{a,b=1}^Q$. Moreover, it is easy to see that this IFS is conformal and satisfies the open set condition.

Recall that $x \in [0,1]$ is a \emph{quadratic irrational} if it is irrational and is a root of a quadratic polynomial $ax^2+bx+c=0$, where $a\neq0$ and $a,b,c$ are integers. It is well known that $x \in [0,1]$ is a quadratic irrational if and only if $x$ has an \emph{eventually periodic} continued fraction expansion; that is, there exist finite sequences $\ov=(\omega_1,\ldots,\omega_{\ell})$ and $\tv=(\tau_1,\ldots,\tau_m)$ such that
$$
x=[\omega_1,\ldots,\omega_{\ell},\overline{\tau_1,\ldots,\tau_m}],
$$
where $\overline{\tv}$ denotes the infinite periodic sequence formed by repeating $\tv$. We will denote the set of quadratic irrationals in $[0,1]$ by $\mathbf{QI}$.

Given a monotonically decreasing approximating function $\psi\colon\N\mapsto\R^+$, let
$$
W(\psi;\bad;\QI):=\left\{x\in\bad:\exists\alpha\in\QI:|T^n(x)-\alpha|<\psi(n)\text{ for infinitely many }n\in\N\right\}.
$$
Thus, $W(\psi;\bad;\QI)$ is the set of badly approximable numbers which are ``well-approximable'' by a fixed quadratic irrational. We will investigate the Hausdorff measure of $W(\psi;\bad;\QI)$. Notice that it is sensible to fix the quadratic irrational in the definition of $W(\psi;\bad;\QI)$ corresponding to a given $x \in \bad$, otherwise we would necessarily have \mbox{$W(\psi;\bad;\QI) = \bad$} since quadratic irrationals are dense in the reals.

Let us now adapt some standard notation from the usual theory of continued fractions. For the proofs and more details, see \cite[Section~2]{LWWX} or \cite{Khint}.

For a sequence of integers $\{a_n\}_{n=1}^{\infty}$, let
\begin{align*}
q_{n+1}(a_1,\ldots,a_{n+1})&:=q_{n+1}=a_{n+1}q_n+q_{n-1}, \text{ and} \\
\quad p_{n+1}(a_1,\ldots,a_{n+1})&:=p_{n+1}=a_{n+1}p_n+p_{n-1}
\end{align*}
for $n\geq1$, where we define $p_{-1}=q_0=1$ and $p_0=q_{-1}=0$. Then,
$$
f_{a_1}\circ\cdots\circ f_{a_n}(x)=\frac{p_{n-1}x+p_n}{q_{n-1}x+q_n}.
$$
Moreover, for every $0<k<n$ and $a_1,\ldots,a_n\in\N$ we have
\begin{equation}\label{eq:compare}
1\leq\frac{q_n(a_1,\ldots,a_n)}{q_k(a_1,\ldots,a_k)q_{n-k}(a_{k+1},\ldots,a_n)}\leq 2,
\end{equation}
and, for every $x\in[0,1]$,
$$
\frac{1}{4q_n^2}\leq|(f_{a_1}\circ\cdots\circ f_{a_n})'(x)|\leq\frac{1}{q_n^{2}}.
$$
Thus, combining the above bounds with the bounded distortion property \eqref{eq:boundeddist}, there exists a constant $K\geq1$, depending on $Q$ but independent of the sequence $a_1,a_2,\dots,a_n$, such that
\begin{align} \label{example bounded distortion}
\frac{1}{K}\times\frac{1}{q_n(a_1,\ldots,a_n)^2} \leq\diam(f_{a_1}\circ\cdots\circ f_{a_n}(\mathbf{Bad}_Q))\leq \frac{K}{q_n(a_1,\ldots,a_n)^2}.
\end{align}

Now, we are ready to state a corollary of our main result.

\begin{theorem}
	Let $W(\psi; \bad; \QI)$ be the set defined above. We have
	$$
	\mathcal{H}^s(W(\psi;\bad;\QI))=\begin{cases}
	0 & \text{ if,  $\forall \, Q\geq1$, }\sum\limits_{n=1}^\infty\sum\limits_{a_1,\ldots,a_n=1}^Q\dfrac{\psi(n)^s}{q_n(a_1,\ldots,a_n)^{2s}}<\infty, \\&\\
	\infty & \text{ if $\exists \, Q \geq1$ such that }\sum\limits_{n=1}^\infty\sum\limits_{a_1,\ldots,a_n=1}^Q\dfrac{\psi(n)^s}{q_n(a_1,\ldots,a_n)^{2s}}=\infty.
	\end{cases}
	$$
\end{theorem}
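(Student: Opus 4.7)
The plan is to reduce to Theorem~\ref{thm:main} applied on the self-conformal sets $\bad_Q$. First, observe the countable decomposition
\[W(\psi;\bad;\QI)=\bigcup_{Q\geq 1}\bigcup_{\alpha\in\QI} W_{\alpha,Q},\qquad W_{\alpha,Q}:=\{x\in\bad_Q:|T^n x-\alpha|<\psi(n)\text{ for infinitely many }n\},\]
which uses that $\QI$ is countable. For the convergence direction, fix $\alpha$ and $Q$. Each set $\{x\in\bad_Q:|T^n x-\alpha|<\psi(n)\}$ is contained in $\bigcup_{a_1,\ldots,a_n=1}^Q f_{a_1}\circ\cdots\circ f_{a_n}(B(\alpha,\psi(n)))$, and by~\eqref{example bounded distortion} each piece of this cover has diameter at most $K\psi(n)/q_n(a_1,\ldots,a_n)^2$. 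Taking the tails $n\geq N$ bounds the Hausdorff $s$-premeasure of $W_{\alpha,Q}$ by the tail of the assumed convergent series, so $\mathcal{H}^s(W_{\alpha,Q})=0$; countable subadditivity over $\alpha\in\QI$ and $Q\in\N$ then gives the convergence part.

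For the divergence direction, suppose the series diverges for some $Q_0$. Since the series is monotone in $Q$ and $\dimh\bad_Q\to 1$ as $Q\to\infty$, we may enlarge $Q\geq Q_0$ if necessary so that $s<\dimh\bad_Q$, whence $\mathcal{H}^s(\bad_Q)=\infty$ by the definition of Hausdorff dimension. (The case $s\geq 1$ does not arise in the divergence regime: using $\sum_{a_i\leq Q} q_n^{-2}\asymp\operatorname{Leb}\{x\in[0,1]:a_1,\ldots,a_n\leq Q\}$, which decays geometrically in $n$, the series converges because $\psi$ is bounded.) I would then apply Theorem~\ref{thm:main} to the $C^{1+\varepsilon}$-conformal OSC iterated function system of pairs $\Phi_Q=\{f_a\circ f_b\}_{a,b=1}^Q$ with attractor $\bad_Q$, the purely periodic quadratic irrational $\alpha=(\sqrt{5}-1)/2=[\overline{1}]\in\bad_Q$, and the approximating function $\phi(k)=c_0\psi(2k)$ with $c_0>0$ small. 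Verification of the divergence hypothesis $\sum_{\ii\in\Lambda^*}\diam(X_\ii)^s\phi(|\ii|)^s=\infty$ proceeds by splitting the original divergent series into its even-$n$ and odd-$n$ subseries; the comparison $q_{2k+1}\asymp q_{2k}$ (uniform in digits bounded by $Q$) together with the monotonicity of $\psi$ shows these two subseries are comparable, so the even part also diverges, which is exactly what Theorem~\ref{thm:main} requires.

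To convert $\mathcal{H}^s(W(\alpha,\Psi))=\mathcal{H}^s(\bad_Q)=\infty$ back into a statement about $W(\psi;\bad;\QI)$, I would establish the inclusion $W(\alpha,\Psi)\subseteq W(\psi;\bad;\QI)$. For $y$ satisfying $|y-f_\ii(\alpha)|<\Psi(\ii)=\diam(X_\ii)\phi(|\ii|)$ for infinitely many $\ii\in\Lambda^*$: pure periodicity of $\alpha$ supplies a uniform $\rho_0>0$ with $\dist(f_\ii(\alpha),\bad_Q\setminus X_\ii)\geq\rho_0\diam(X_\ii)$ for every $\ii$, so for $c_0<\rho_0$ any such witness $y$ must lie in $X_\ii$. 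Writing $y=f_\ii(z)$ with $z=T^{2|\ii|}y\in\bad_Q$ and invoking \eqref{eq:boundeddist} and \eqref{example bounded distortion} yields $|z-\alpha|\leq C\phi(|\ii|)=Cc_0\psi(2|\ii|)$; choosing $c_0\leq 1/C$ then forces $|T^{2|\ii|}y-\alpha|<\psi(2|\ii|)$, so $y\in W(\psi;\bad;\QI)$. The main obstacle is this uniform ``well-inside-the-cylinder'' estimate, but it follows from $\alpha$ lying at positive distance from the finite collection of endpoints of the level-$m$ cylinders of $\Phi_Q$ (where $m$ is the period of $\alpha$), combined with the bounded distortion property ensuring that $f_\ii$ distorts both distances and $\diam(X_\ii)$ by the same factor $|f_\ii'(\xi)|$ up to a uniform constant.
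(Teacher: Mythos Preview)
Your approach is correct and follows essentially the same strategy as the paper: decompose over $Q$ and $\alpha$, pass to the IFS $\{f_a\circ f_b\}_{a,b=1}^Q$ with attractor $\bad_Q$, split into even and odd times, and invoke Theorem~\ref{thm:main}. The paper handles both the convergence and divergence cases by directly applying Theorem~\ref{thm:main} to the sets $U_Q(\psi_i;\alpha)=\{x\in\bad_Q:|T^{2n}x-\alpha|<\psi_i(n)\text{ i.o.}\}$ for general $\alpha\in\QI\cap\bad_Q$, whereas you use an elementary covering argument for convergence and fix the single centre $\alpha=[\overline{1}]$ for divergence.

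Your argument is in fact more explicit than the paper's on two points. First, you observe that divergence forces $s<1$ and then enlarge $Q$ so that $s<\dimh\bad_Q$, which is needed to turn the conclusion $\mathcal H^s(W(\alpha,\Psi))=\mathcal H^s(\bad_Q)$ of Theorem~\ref{thm:main} into $\mathcal H^s=\infty$; the paper writes $\infty$ directly in its display \eqref{eq:cor} without justifying this step. Second, you spell out why a witness $\ii$ in the definition of $W(\alpha,\Psi)$ must coincide with the first $|\ii|$ digits of $y$: this follows because the IFS $\{f_a\circ f_b\}_{a,b=1}^Q$ actually satisfies the strong separation condition (cylinder endpoints are rationals, hence avoid $\bad_Q$), so $\dist(X_\ii,\bad_Q\setminus X_\ii)\gtrsim\diam(X_\ii)$ uniformly and your ``well-inside'' constant $\rho_0$ exists for \emph{any} $\alpha\in\bad_Q$, not just periodic ones. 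The paper absorbs this into the phrase ``by Theorem~\ref{thm:main} taken together with \eqref{example bounded distortion}''. Your even/odd comparison via $q_{2k+1}\asymp_Q q_{2k}$ is the same device the paper uses through \eqref{eq:compare}.
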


\begin{proof}
	If $x\in W(\psi;\bad;\QI)$ then there exists $\alpha=\alpha(x)\in\mathbf{QI}$ such that $|T^n(x)-\alpha|<\psi(n)$ for infinitely many $n\in\N$. On the other hand, there exists $Q_1\geq1$ such that $x\in\mathbf{Bad}_{Q_1}$ and since $\alpha$ is quasi-periodic there exists $Q_2\geq1$ such that $\alpha\in\mathbf{Bad}_{Q_2}$. Hence, $x\in\mathbf{Bad}_{Q}$ and $\alpha\in\mathbf{QI}\cap\mathbf{Bad}_Q$, where $Q=\max\{Q_1,Q_2\}$. Thus, $x\in W_Q(\psi;\alpha)$
	where
	\[W_Q(\psi;\alpha)=\left\{x\in\mathbf{Bad}_Q:|T^{n}(x)-\alpha|<\psi(n)\text{ for infinitely many }n \in \N\right\}.\]
	So, we have that
	\begin{equation}\label{eq:decompbad}
	W(\psi;\bad;\QI)=\bigcup_{Q=1}^{\infty}\bigcup_{\alpha\in\mathbf{QI}\cap\mathbf{Bad}_Q}W_Q(\psi;\alpha).
	\end{equation}

	It is easy to see that if $x\in W_Q(\psi;\alpha)$ then we must have $|T^{2n}(x)-\alpha|<\psi(2n)$ for infinitely many $n\in\N$ or $|T^{2n+1}(x)-\alpha|<\psi(2n+1)$ for infinitely many $n\in\N$. Hence, we can decompose $W_{Q}(\psi;\alpha)$ into
	\[W_Q(\psi;\alpha)=U_{Q}(\psi_0;\alpha)\cup T(U_{Q}(\psi_1;\alpha)),\] where $\psi_0(n)=\psi(2n)$ and $\psi_1(n)=\psi(2n+1)$ and
	$$
	U_{Q}(\psi_i;\alpha)=\{x\in\mathbf{Bad}_Q:|T^{2n}(x)-\alpha|<\psi_i(n)\text{ for infinitely many } n \in \N\}.
	$$
	
	Since the set $\mathbf{Bad}_Q$ is the attractor of the conformal IFS $\{f_a\circ f_b\}_{a,b=1}^Q$, it follows from Theorem~\ref{thm:main} taken together with \eqref{example bounded distortion} that, for any $\alpha\in\mathbf{QI}\cap\mathbf{Bad}_Q$, we have
	\begin{align}\label{eq:cor}
	\mathcal{H}^s(U_{Q}(\psi_i, \alpha))=\begin{cases}
	0 & \text{ if }\sum_{n=1}^\infty\sum_{a_1=1,\ldots,a_{2n}=1}^Q\frac{\psi_i(n)^s}{q_{2n}(a_1,\ldots,a_{2n})^{2s}}<\infty \\&\\
	\infty & \text{ if }\sum_{n=1}^\infty\sum_{a_1=1,\ldots,a_{2n}=1}^Q\frac{\psi_i(n)^s}{q_{2n}(a_1,\ldots,a_{2n})^{2s}}=\infty.
	\end{cases}
	\end{align}
	On the other hand,
	$$
	\sum\limits_{n=1}^\infty\sum\limits_{a_1,\ldots,a_n=1}^Q\dfrac{\psi(n)^s}{q_n(a_1,\ldots,a_n)^{2s}}=\sum\limits_{n=1}^\infty\sum\limits_{a_1,\ldots,a_{2n}=1}^Q\dfrac{\psi(2n)^s}{q_{2n}(a_1,\ldots,a_{2n})^{2s}}+\sum\limits_{n=0}^\infty\sum\limits_{a_1,\ldots,a_{2n+1}=1}^Q\dfrac{\psi(2n+1)^s}{q_{2n+1}(a_1,\ldots,a_{2n+1})^{2s}}.
	$$
	By \eqref{eq:compare},
	\begin{align*}
	\sum\limits_{n=0}^\infty\sum\limits_{a_1,\ldots,a_{2n+1}=1}^Q\dfrac{\psi(2n+1)^s}{q_{2n+1}(a_1,\ldots,a_{2n+1})^{2s}}&\leq2\sum\limits_{n=0}^\infty\sum\limits_{a_1,\ldots,a_{2n+1}=1}^Q\dfrac{\psi(2n+1)^s}{q_{1}(a_{2n+1})^{2s}q_{2n}(a_1,\ldots,a_{2n})^{2s}}\\
	&=2\left(\sum\limits_{a=1}^Qq_{1}(a)^{-2s}\right)\sum\limits_{n=0}^\infty\sum\limits_{a_1,\ldots,a_{2n}=1}^Q\dfrac{\psi(2n+1)^s}{q_{2n}(a_1,\ldots,a_{2n})^{2s}},\\
	\end{align*}
	and the other inequality $$\sum\limits_{n=0}^\infty\sum\limits_{a_1,\ldots,a_{2n+1}=1}^Q\dfrac{\psi(2n+1)^s}{q_{2n+1}(a_1,\ldots,a_{2n+1})^{2s}}\geq\left(\sum\limits_{a=1}^Qq_{1}(a)^{-2s}\right)\sum\limits_{n=0}^\infty\sum\limits_{a_1,\ldots,a_{2n}=1}^Q\dfrac{\psi(2n+1)^s}{q_{2n}(a_1,\ldots,a_{2n})^{2s}}$$ follows by similar argument. Thus, $\sum\limits_{n=1}^\infty\sum\limits_{a_1,\ldots,a_n=1}^Q\dfrac{\psi(n)^s}{q_n(a_1,\ldots,a_n)^{2s}}$ is finite if and only if $$\sum_{n=1}^\infty\sum_{a_1=1,\ldots,a_{2n}=1}^Q\frac{\psi_i(n)^s}{q_{2n}(a_1,\ldots,a_{2n})^{2s}}<\infty$$ for $i=0,1$. Then the statement follows by \eqref{eq:decompbad} and \eqref{eq:cor}.
\end{proof}

\vspace{0.5cm}
\noindent {\bf Acknowledgements.}
This project grew out of initial discussions had while both authors were in attendance at the program on \emph{Fractal Geometry and Dynamics} at the Mittag--Leffler Institut in November 2017. We are indebted both to the organisers of the program and the staff at the Institut for a pleasant and productive stay at the Institut. The first author is grateful to the Budapest University of Technology and Economics for their hospitality during her visit there in October 2018. She would also like to thank Tom Kempton and Charles Walkden for patiently listening to her ramblings on this project. Both authors would like to thank Thomas Jordan for pointing out several useful references. We also thank the anonymous referee for a number of useful comments.

\bibliographystyle{abbrv}

\end{document}